\newtheorem{theorem}{Theorem}[section]
\newtheorem{lemma}[theorem]{Lemma}
\newtheorem{proposition}[theorem]{Proposition}
\newtheorem{corollary}[theorem]{Corollary}
\newtheorem{definition}[theorem]{Definition}
\theoremstyle{definition}
\newtheorem{remark}[theorem]{Remark}
\newtheorem{question}[theorem]{Question}
\DeclareMathOperator{\reg}{reg}
\DeclareMathOperator{\st}{st}
\DeclareMathOperator{\lex}{lex}
\DeclareMathOperator{\pol}{pol}
\title{Powers of Edge Ideals with Linear Resolutions}
\author{Nursel Erey}
 \author{Nursel Erey\footnote{Department of Mathematics, North Dakota State University, Fargo, ND   \textit{e-mail}: nurselerey@gmail.com \qquad \qquad \textit{2010 Mathematics Subject Classification.} 13D02, 05E40   \quad  \textit{Keywords}: Edge Ideal, Castelnuovo-Mumford Regularity} }
\begin{document}

\date{} 
\maketitle


\begin{abstract}
We show that if $G$ is a gap-free and diamond-free graph, then $I(G)^s$ has a linear minimal free resolution for every $s\geq 2$. 
\end{abstract}


\section{Introduction}

Let $S=k[x_1,\dots ,x_n]$ be a polynomial ring and let $G$ be a finite simple graph with edge ideal $I(G)$. There has been recent interest in studying the asymptotic regularity of powers of edge ideals for various families of graphs \cite{banerjee, selvi, herzog hibi zheng, moghimian et al, peeva nevo}. It is known \cite{kodiyalam} that when $I$ is a graded ideal, regularity of powers $I^s$ is a linear function for large $s$. In particular, for any graph $G$, there exist constants $c$ and $s_0$ such that $\reg(I(G)^s)=2s+c$ for all $s\geq s_0$. A special case of interest is to combinatorially characterize when the powers of edge ideals have eventually linear minimal free resolutions. Since the $s$th power of the edge ideal is generated in degree $2s$, the ideal $I(G)^s$ has a linear minimal free resolution if and only if $c=0$ in the given formula.

It was proved by Francisco, H\`{a} and Van Tuyl that if some power of $I(G)$ has linear minimal free resolution, then the complement graph $G^c$ has no induced $4$-cycles, i.e., $G$ is gap-free. In this direction, Nevo and Peeva \cite{peeva nevo} raised the following question.

\begin{question}
Is it true that a graph $G$ is gap-free if and only if $I(G)^s$ has a linear minimal free resolution for every $s\gg 0$?
\end{question}

This question is known to have a positive answer when the regularity of the edge ideal is $2$. In fact, Herzog, Hibi and Zheng \cite{herzog hibi zheng} proved that if $I(G)$ has a linear minimal free resolution, then so does $I(G)^s$ for all $s\geq 1$. Banerjee \cite{banerjee} proved that if $G$ is a gap-free and cricket-free graph, then $I(G)^s$ has linear minimal free resolution for every $s\geq 2$. In this paper, we provide a new family of gap-free graphs for which the second and higher powers of edge ideals have linear minimal free resolutions.

\section{Preliminaries}
\subsection{Graph theory}

Throughout this paper we consider finite simple graphs, i.e., graphs with no loops or multiple edges. For any graph $G$ we write $V(G)$ and $E(G)$ respectively for the vertex and edge sets of the graph. We say two vertices $u$ and $v$ are \textbf{adjacent}  and write $uv\in G$, if there is an edge between them. The set $N(u)=\{v: v \text{ is adjacent to } u \}$ is called the \textbf{neighbor set} of $u$. Any $v\in N(u)$ is called a \textbf{neighbor} of $u$. A vertex is called an \textbf{isolated vertex} if it has no neighbors. A \textbf{complete graph} (or \textbf{clique}) is a graph such that every pair of vertices are adjacent. A complete graph on $n$ vertices is denoted by $K_n$.

A graph $H$ is called a \textbf{subgraph} of $G$ if the vertex and edge sets of $H$ are contained respectively in those of $G$. A subgraph $H$ of $G$ is called an \textbf{induced subgraph} if $uv\in G$ implies $uv\in H$ for every $u,v\in V(H)$. A subset $A$ of the vertices of $G$ is called \textbf{independent} if $uv\notin G$ for every $u,v\in A$. A \textbf{dominating clique} of a graph $G$ is a complete subgraph $H$ of $G$ such that every vertex of $G$ either belongs to $H$ or is adjacent to some vertex of $H$.

 For any graph $K$, we say $G$ is \textbf{$K$-free} if $K$ is not an induced subgraph of $G$. The \textbf{clique number} of $G$, denoted $\omega(G)$, is the number of vertices in a maximum clique of $G$. For any vertex $v$ of $G$, the graph $G-v$ denotes the induced subgraph of $G$ which is obtained from $G$ by \textbf{removing the vertex} $v$. The \textbf{complement} of a graph $G$, denoted $G^c$, is a graph on the same vertex set such that for every vertex $u$ and $v$, $uv\in G^c$ if $uv\notin G$.

Let $F$ and $G$ be graphs with no common vertices and let $v\in V(G)$. We say $G'$ is obtained from $G$ by \textbf{substituting} $F$ for $v$ if the vertex $v$ is replaced by $F$ and all the vertices of $N(v)$ are adjacent to all the vertices of $F$, i.e., $V(G')=(V(G)\cup V(F))\setminus \{v\}$ and $E(G')=E(G-v)\cup E(F)\cup \{ab: a\in N(v), b\in V(F)\}$. For a vertex $v\in V(G)$ and a positive integer $k$, we say $G'$ is obtained from $G$ by \textbf{multiplying} $v$ by $k$ when $G'$ is formed by substituting an independent set $A$ of $k$ vertices for $v$. 

A \textbf{cycle} graph with vertices $v_1,\dots ,v_n$ and edges $v_1v_2, \dots , v_{n-1}v_n, v_nv_1$ is denoted by $C_n=(v_1v_2\dots v_n)$. The complement of a cycle graph is called an \textbf{anticycle}. A \textbf{path} in a graph $G$ is a sequence of vertices $u_1,\dots, u_n$ such that $u_iu_{i+1}\in G$ for all $1\leq i\leq n-1$. A \textbf{path graph} on $n$ vertices, denoted by $P_n$, is a graph with $n$ distinct vertices $u_1,\dots , u_n$ and $n-1$ edges $u_1u_2, u_2u_3, \dots , u_{n-1}u_n$. A graph which is isomorphic to the graph with vertices $a, b, c, d$ and edges $ab, bc, ac, ad, cd$ is called a \textbf{diamond}. A graph which is isomorphic to the graph with vertices $w_1, w_2, w_3, w_4, w_5$ and edges $w_1w_3, w_2w_3, w_3w_4, w_3w_5, w_4w_5$ is called a \textbf{cricket}. 

 A graph $G$ is called \textbf{chordal} if every induced cycle in $G$ has at most three vertices. A \textbf{bipartite} graph is a graph that does not contain any odd-length cycles. Two disjoint edges $uv$ and $ab$ of $G$ are said to form a \textbf{gap} if the induced subgraph of $G$ on the vertices $u, v, a, b$ has only two edges. A graph $G$ is called \textbf{gap-free} if no two edges form a gap, or equivalently, $G^c$ is $C_4$-free.
 
For any $v\in V(G)$ and any induced subgraph $H$ of $G$, the \textbf{distance} between $v$ and $H$ is the number of edges in a shortest path from $v$ to a vertex of $H$. In particular, the distance between $v$ and $H$ is zero when $v\in H$.

\subsection{Castelnuovo-Mumford regularity of monomial ideals}
Let $S=k[x_1,\dots ,x_n]$ be a polynomial ring over a field $k$. The \textbf{Castelnuovo-Mumford regularity} of a monomial ideal $I$ is given by
$$\reg(I)=\max\{j-i : b_{i,j}(I)\neq 0\}   $$
where $b_{i,j}(I)$ are the graded Betti numbers of $I$. If $G$ is a graph on the vertices $x_1,\dots , x_n$, then the \textbf{edge ideal} of $G$ is defined as
$$I(G)= (xy: xy \text{ is an edge of } G). $$
The method of polarization reduces the study of minimal free resolutions of monomial ideals to that of square-free monomial ideals. Therefore quadratic monomial ideals can be studied via edge ideals. We recall the following well-known results.

\begin{theorem}\cite[Corollary 1.6.3]{herzog hibi monomial ideals}\label{thm: polarization}
	Let $I\subseteq S$ be a monomial ideal and let $I^{\pol}$ be its polarization. Then $\reg(I)=\reg(I^{\pol})$.
\end{theorem}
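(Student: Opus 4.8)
The plan is to realize polarization as an iterated ``one-variable stretching'' and to exhibit the minimal graded free resolution of $S^{\pol}/I^{\pol}$ as a lift, along a regular sequence of linear forms, of that of $S/I$; then the two modules have the same graded Betti numbers, and taking $\max\{j-i\}$ over the nonzero ones gives $\reg(I)=\reg(I^{\pol})$.

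Write $I=(u_1,\dots,u_m)$ with $u_i=\prod_l x_l^{a_{il}}$, set $b_l=\max_i a_{il}$, and recall that $S^{\pol}=k[x_{l,j}: 1\le l\le n,\ 1\le j\le b_l]$ and $I^{\pol}=(u_1^{\pol},\dots,u_m^{\pol})$ with $u_i^{\pol}=\prod_l\prod_{j=1}^{a_{il}}x_{l,j}$, where we identify $x_{l,1}=x_l$ so that $S\subseteq S^{\pol}$. One passes from $I$ to $I^{\pol}$ by a finite sequence of stretching steps: given a variable $x$ whose maximal exponent $a$ among the generators of the current ideal is at least $2$, adjoin one new variable $y$ and replace, in every generator divisible by $x^a$, the factor $x^a$ by $x^{a-1}y$; call the resulting ideal $\tilde I\subseteq R:=S[y]$. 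At each step the maximal exponent of $x$ drops by one, while $y$ occurs only to the first power and is untouched by later steps, so the process terminates at $I^{\pol}$. It therefore suffices to prove, for a single step $I\subseteq S\ \rightsquigarrow\ \tilde I\subseteq R$, the two statements: \emph{(a)} $R/(\tilde I+(y-x))\cong S/I$ as graded $k$-algebras, via $y\mapsto x$; and \emph{(b)} $y-x$ is a nonzerodivisor on $R/\tilde I$.

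Claim \emph{(a)} is immediate: the surjection $R\to S$ fixing $S$ and sending $y\mapsto x$ has kernel $(y-x)$ and carries each stretched generator $x^{a-1}yv$ back to $x^av=u$ and fixes the other generators, so it carries $\tilde I$ onto $I$. Granting \emph{(b)} as well, let $\mathbb F$ be the minimal graded free resolution of $R/\tilde I$ over $R$. Since $y-x$ is a linear nonzerodivisor on $R/\tilde I$ one has $\Tor_i^R(R/\tilde I,\,R/(y-x))=0$ for all $i\ge 1$, so $\mathbb F\otimes_R R/(y-x)$ is a graded free resolution of $(R/\tilde I)\otimes_R R/(y-x)=R/(\tilde I+(y-x))\cong S/I$ over $R/(y-x)\cong S$; it is minimal because the entries of its differentials lie in the graded maximal ideal. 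Hence $b_{i,j}(S/I)=b_{i,j}(R/\tilde I)$ for all $i,j$, equivalently $b_{i,j}(I)=b_{i,j}(\tilde I)$ for all $i,j$. Composing the stretching steps gives $b_{i,j}(I)=b_{i,j}(I^{\pol})$ for all $i,j$, whence $\reg(I)=\reg(I^{\pol})$.

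The only step that uses the combinatorics of polarization — and the main obstacle — is claim \emph{(b)}. Because $\tilde I$ is a monomial ideal, every associated prime of $R/\tilde I$ is generated by a subset of the variables, and such a prime contains the binomial $y-x$ only if it contains both $x$ and $y$; hence it suffices to show that no associated prime of $\tilde I$ contains both $x$ and $y$, equivalently that $(\tilde I:(y-x))=\tilde I$. To see the latter, suppose $(y-x)f\in\tilde I$ and write $f=g+h$ with $g\in\tilde I$ and no monomial of $h$ lying in $\tilde I$, so that $(y-x)h\in\tilde I$ as well; comparing, monomial by monomial, the supports of $xh$ and $yh$ against the monomial generators of $\tilde I$, and using the structural features of a stretching step — every minimal generator of $\tilde I$ has $y$-degree at most $1$, and a minimal generator of $y$-degree $1$ has the form $x^{a-1}yv$ with $x^av\notin\tilde I$ — forces $h=0$, i.e.\ $f=g\in\tilde I$. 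Everything outside this verification is formal homological algebra.
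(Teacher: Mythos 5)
The paper gives no proof of this statement; it is quoted verbatim from Herzog--Hibi, and the proof there is exactly your argument: polarization is an iterated one-variable stretching, the linear forms $y-x$ form a regular sequence on $S^{\pol}/I^{\pol}$ cutting it down to $S/I$, so the graded Betti numbers (hence the regularity) are preserved. Your proposal is correct and matches that standard proof; the only compressed spot is the final ``forces $h=0$'' in claim \emph{(b)}, which does go through because every minimal generator of $\tilde I$ has $x$-degree at most $a-1$ and $y$-degree at most $1$.
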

\begin{lemma}\label{lem:regularity after adding variables} 
	If $I\subseteq S$ is a monomial ideal, then $\reg(I,x)\leq \reg(I)$ for any variable $x$.
\end{lemma}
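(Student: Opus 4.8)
The plan is to reduce to the case of a squarefree monomial ideal by polarization, reinterpret the statement in terms of Stanley--Reisner complexes, and then read off the inequality from Hochster's formula for Betti numbers. Throughout, recall that $\reg(I,x)\le\reg(I)$ is equivalent to $\reg(S/(I,x))\le\reg(S/I)$.

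The first step is Theorem~\ref{thm: polarization}. Write $I^{\pol}=I_\Delta$ for a simplicial complex $\Delta$, so $\reg(I)=\reg(I_\Delta)$. The reduction hinges on the fact that polarizing $(I,x)$ produces the ideal $(I^{\pol},x^{(1)})$: the generators of $I$ divisible by $x$ become divisible by $x^{(1)}$ after polarization, hence redundant modulo $x^{(1)}$, and any polarization variables not occurring in the result may be adjoined without changing regularity. Thus $\reg(I,x)=\reg(I^{\pol},x^{(1)})$ as well, and since $I^{\pol}$ is squarefree it suffices to treat the case $I=I_\Delta$ with $\Delta$ a complex on $[n]=\{1,\dots,n\}$ and $x=x_i$ a variable. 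Here $(I_\Delta,x_i)=I_\Gamma$, where $\Gamma=\{F\in\Delta : i\notin F\}$ is the deletion of $i$, regarded as a complex on the full vertex set $[n]$ (so $i$ is a ghost vertex and $x_i$ is a minimal generator of $I_\Gamma$).

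The core is the comparison of $S/I_\Gamma$ with $S/I_\Delta$ through Hochster's formula
\[
\beta_{p,q}(S/I_\Delta)=\sum_{W\subseteq[n],\ |W|=q}\dim_k\tilde H_{\,q-p-1}(\Delta|_W;k),
\]
together with the identity $\reg(S/I_\Delta)=\max\{\,q-p : \beta_{p,q}(S/I_\Delta)\ne0\,\}$, and likewise for $\Gamma$. The point is that every restriction $\Gamma|_W$ equals $\Delta|_{W'}$ for a set $W'\subseteq[n]\setminus\{i\}$ with $|W'|\le|W|$, namely $W'=W$ when $i\notin W$ and $W'=W\setminus\{i\}$ when $i\in W$. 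Hence, if $\beta_{p,q}(S/I_\Gamma)\ne0$, then $\tilde H_{q-p-1}(\Delta|_{W'};k)\ne0$ for some $W'$ with $|W'|\le q$; putting $p'=|W'|-(q-p)\ge0$ (the nonnegativity because $\tilde H_{q-p-1}(\Delta|_{W'})\ne0$ forces $|W'|\ge q-p$), this term appears in $\beta_{p',|W'|}(S/I_\Delta)$, so $\beta_{p',|W'|}(S/I_\Delta)\ne0$ with $|W'|-p'=q-p$. Therefore $\reg(S/I_\Delta)\ge q-p$ whenever $\beta_{p,q}(S/I_\Gamma)\ne0$, i.e.\ $\reg(S/I_\Gamma)\le\reg(S/I_\Delta)$, which is exactly $\reg(I,x)\le\reg(I)$.

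The only place that needs care is the polarization bookkeeping of the second paragraph --- checking that $(I,x)^{\pol}$ and $(I^{\pol},x^{(1)})$ define the same ideal once ghost and unused variables are accounted for; the Hochster comparison itself is routine once the problem has been put in squarefree form. If one wishes to bypass this bookkeeping, an alternative is to use the short exact sequence $0\to(S/(I:x))(-1)\xrightarrow{\ \cdot x\ }S/I\to S/(I,x)\to0$, which immediately gives $\reg(S/(I,x))\le\max\{\reg(S/I),\reg(S/(I:x))\}$; but then one must still prove $\reg(S/(I:x))\le\reg(S/I)$, and after polarization this becomes $\reg(S/I_{\operatorname{star}_\Delta(i)})\le\reg(S/I_\Delta)$ (using $I_\Delta:x_i=I_{\operatorname{star}_\Delta(i)}$), whose proof requires the Mayer--Vietoris sequence relating the reduced homology of a restriction of $\Delta$ to those of the corresponding restrictions of $\operatorname{del}_\Delta(i)$, $\operatorname{star}_\Delta(i)$, and $\operatorname{link}_\Delta(i)$. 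Since that route is strictly more involved, I would carry out the deletion argument above.
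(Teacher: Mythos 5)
The paper states this lemma as a known fact and gives no proof, so there is nothing of the author's to compare against; judged on its own, your argument is correct and complete. The polarization bookkeeping is sound: every minimal generator of $I$ divisible by $x$ polarizes to a multiple of $x^{(1)}$ and is therefore redundant in $(I^{\pol},x^{(1)})$, while the remaining generators polarize identically in $(I,x)^{\pol}$ and $I^{\pol}$, so the two ideals coincide up to unused ambient variables, which do not affect regularity. The Hochster step is also correct: writing $(I_\Delta,x_i)=I_\Gamma$ for the deletion $\Gamma$ with $i$ as a ghost vertex, each restriction $\Gamma|_W$ equals $\Delta|_{W'}$ with $W'=W\setminus\{i\}$, so every nonvanishing summand $\tilde H_{q-p-1}(\Gamma|_W;k)$ reappears as a summand of $\beta_{p',|W'|}(S/I_\Delta)$ with $|W'|-p'=q-p$, and the nonnegativity of $p'=|W'|-(q-p)$ is correctly justified by the fact that nonzero homology in degree $q-p-1$ forces a face on $q-p$ vertices. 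In effect you have proved the standard restriction lemma $\reg(I_{\Delta|_W})\le\reg(I_\Delta)$ in the special case of deleting one vertex and combined it with Theorem~\ref{thm: polarization}; your closing observation that the alternative route through the exact sequence $0\to (S/(I:x))(-1)\to S/I\to S/(I,x)\to 0$ merely trades this for the companion inequality $\reg(S/(I:x))\le\reg(S/I)$, which is no easier, is also accurate.
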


We will make use of the following theorem to bound the regularity of monomial ideals.
\begin{theorem} \cite[Lemma~2.10]{huneke}\cite[Lemma~2.11]{banerjee}\label{thm:regularity bound by colon ideals}
	Let $I\subseteq S$ be a monomial ideal, and let $m$ be a monomial of degree $d$. Then
	$$\reg(I)\leq \max\{\reg(I:m)+d, \reg(I,m)\}. $$
	Moreover, if $m$ is a variable $x$ appearing in $I$, then $\reg(I)$ is equal to one of these terms. 
\end{theorem}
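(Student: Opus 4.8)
The plan is to derive both assertions from the short exact sequence of graded $S$-modules
$$0 \longrightarrow (S/(I:m))(-d) \xrightarrow{\ \cdot m\ } S/I \longrightarrow S/(I,m) \longrightarrow 0,$$
in which the left-hand map is multiplication by $m$; exactness is checked directly, since the kernel of $S/I\to S/(I,m)$ equals $(I,m)/I\cong mS/\bigl(mS\cap I\bigr)=mS/m(I:m)\cong (S/(I:m))(-d)$. Applying $\Tor^S_\bullet(-,k)$ produces a long exact sequence, and the regularity statements will be read off from it together with the usual identity $\reg(J)=\reg(S/J)+1$ for a nonzero proper ideal $J$.

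First I would prove the inequality. For each homological degree $i$ and internal degree $j$ the long exact sequence contains an exact stretch
$$\Tor^S_i(S/(I:m),k)_{j-d}\longrightarrow \Tor^S_i(S/I,k)_j\longrightarrow \Tor^S_i(S/(I,m),k)_j .$$
If $j-i>\max\{\reg(S/(I:m))+d,\ \reg(S/(I,m))\}$, then $(j-d)-i>\reg(S/(I:m))$ and $j-i>\reg(S/(I,m))$, so the two outer modules vanish and hence so does the middle one. Thus $\reg(S/I)\le\max\{\reg(S/(I:m))+d,\ \reg(S/(I,m))\}$, and passing from $\reg(S/-)$ to $\reg(-)$ (the degenerate cases $m\in I$ or $(I,m)=S$ being immediate) gives $\reg(I)\le\max\{\reg(I:m)+d,\ \reg(I,m)\}$.

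For the ``moreover'' part, take $m=x$ a variable, so $d=1$. By Lemma~\ref{lem:regularity after adding variables} we have $\reg(I,x)\le\reg(I)$, equivalently $\reg(S/(I,x))\le\reg(S/I)$. If equality holds here, then $\reg(I)=\reg(I,x)$ and we are done. Otherwise $\reg(S/(I,x))<\reg(S/I)$, so the bound just proved forces $\reg(S/I)\le\reg(S/(I:x))+1$, and for the reverse inequality I would return to the long exact sequence: choose $i,j$ with $j-i=\reg(S/I)$ and $\Tor^S_i(S/I,k)_j\neq 0$. Since $j-i>\reg(S/(I,x))$ we have $\Tor^S_i(S/(I,x),k)_j=0$, so in the exact stretch $\Tor^S_i(S/(I:x),k)_{j-1}\to\Tor^S_i(S/I,k)_j\to\Tor^S_i(S/(I,x),k)_j=0$ the first arrow is surjective; hence $\Tor^S_i(S/(I:x),k)_{j-1}\neq 0$ and $\reg(S/(I:x))\ge(j-1)-i=\reg(S/I)-1$. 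Therefore $\reg(S/I)=\reg(S/(I:x))+1$, that is, $\reg(I)=\reg(I:x)+1$. The main obstacle is exactly this last case analysis: the short exact sequence by itself gives only the upper bound and the two companion inequalities $\reg(S/(I,x))\le\max\{\reg(S/I),\reg(S/(I:x))\}$ and $\reg(S/(I:x))+1\le\max\{\reg(S/I),\reg(S/(I,x))+1\}$, which do not suffice to pin $\reg(I)$ down; it is Lemma~\ref{lem:regularity after adding variables} that rules out the a priori possibility that $\reg(I)$ is strictly below both $\reg(I:x)+1$ and $\reg(I,x)$.
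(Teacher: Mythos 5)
Your proof is correct: the short exact sequence $0 \to (S/(I:m))(-d) \to S/I \to S/(I,m) \to 0$, the resulting long exact sequence in $\Tor$, and the use of $\reg(I,x)\leq\reg(I)$ to settle the ``moreover'' case is exactly the standard argument, and it is the one given in the sources the paper cites for this result (the paper itself quotes the theorem without proof). Both the upper bound for general $m$ and the equality case for a variable $x$ are handled soundly, including the surjectivity argument showing $\Tor_i(S/(I:x),k)_{j-1}\neq 0$ when $\reg(S/(I,x))<\reg(S/I)$.
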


The next result is a restatement of Theorem~\ref{thm:regularity bound by colon ideals} in terms of edge ideals. Note that for any vertex $x$ of $G$, the set $\st x$ stands for $N(x)\cup\{x\}$.

\begin{theorem}\cite[Lemma~3.1]{huneke}\label{lem: regularity bound with star x} Let $x$ be a vertex of $G$ with neighbors $y_1,\dots ,y_m$. Then
	$$\reg(I(G))\leq \max\{\reg (I(G- \st x))+1, \reg(I(G- x)) \}. $$
	Moreover, $\reg(I(G))$ is equal to one of these terms.
\end{theorem}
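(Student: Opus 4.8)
The plan is to obtain the statement as a direct specialization of Theorem~\ref{thm:regularity bound by colon ideals}. Apply that result to the monomial ideal $I=I(G)$ and the degree-one monomial $m=x$, which is a variable appearing in $I(G)$ (if $x$ is isolated the claim is immediate, since then $\reg(I(G))=\reg(I(G-x))$, so we may assume $m\geq 1$). It gives
\[
\reg(I(G))\leq \max\{\reg(I(G):x)+1,\ \reg(I(G),x)\},
\]
and moreover $\reg(I(G))$ equals one of the two terms on the right. So the whole argument reduces to identifying the ideals $I(G):x$ and $(I(G),x)$ and checking that their regularities are $\reg(I(G-\st x))$ and $\reg(I(G-x))$, respectively.

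For the colon ideal, write $I(G)=I(G-x)+(xy_1,\dots,xy_m)$. Since the colon of a monomial ideal by a variable is computed generator by generator (dividing each generator by its gcd with the variable), one gets $I(G):x=(y_1,\dots,y_m)+I(G-x)$; and as every edge of $G-x$ meeting $\st x$ is a multiple of some $y_j$, this simplifies to $I(G):x=(y_1,\dots,y_m)+I(G-\st x)$. The two summands involve disjoint sets of variables, $(y_1,\dots,y_m)$ has a linear (Koszul) resolution with regularity $1$, and hence $\reg(I(G):x)=\reg(I(G-\st x))$; this can be justified by the product formula $\reg(J_1+J_2)=\reg(J_1)+\reg(J_2)-1$ for ideals on disjoint variables (via the Künneth formula for $\Tor$), or by removing the variables $y_1,\dots,y_m$ one at a time using the short exact sequence $0\to J\to (J,y)\to (S/J)(-1)\to 0$, valid whenever $y$ divides no minimal generator of $J$. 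For the second ideal, every edge through $x$ is a multiple of $x$, so $(I(G),x)=(I(G-x),x)$, and since $x$ occurs in no minimal generator of $I(G-x)$ the same short exact sequence gives $\reg(I(G-x),x)=\reg(I(G-x))$: Lemma~\ref{lem:regularity after adding variables} supplies the inequality $\leq$, and the sequence supplies the reverse inequality.

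Substituting the identities $\reg(I(G):x)=\reg(I(G-\st x))$ and $\reg(I(G),x)=\reg(I(G-x))$ into Theorem~\ref{thm:regularity bound by colon ideals} then yields both the displayed bound and the assertion that $\reg(I(G))$ coincides with one of $\reg(I(G-\st x))+1$ and $\reg(I(G-x))$. The monomial-ideal bookkeeping is entirely routine; the points that require some care are that the ``moreover'' clause genuinely needs the two \emph{equalities} above (Lemma~\ref{lem:regularity after adding variables} alone, giving only $\reg(I(G-x),x)\leq\reg(I(G-x))$, would not suffice), and that the degenerate case in which $G-\st x$ is edgeless, where $\reg(I(G):x)=1$, must be read off directly rather than through the product formula.
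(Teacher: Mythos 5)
Your proof is correct and follows exactly the route the paper intends: it introduces this statement as a restatement of Theorem~\ref{thm:regularity bound by colon ideals} with $m=x$, and your computation of $I(G):x=(y_1,\dots,y_m)+I(G-\st x)$ and $(I(G),x)=(I(G-x),x)$, together with the disjoint-variables regularity identities, is precisely the verification of that restatement. The care you take with the ``moreover'' clause (needing equalities, not just the inequality of Lemma~\ref{lem:regularity after adding variables}) and with the degenerate cases is appropriate and matches the standard argument in the cited source.
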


Banerjee \cite{banerjee} introduced the following total order on the minimal monomial generators of powers of edge ideals. 

\begin{definition}\cite[Discussion~4.1]{banerjee}\label{def:order on the generators of the powers of ideal}
		Let $I$ be an edge ideal which is minimally generated by the monomials $L_1,\dots ,L_k$. Consider the order $L_1>L_2 >\cdots > L_k$ and let $n\geq 2$. For any minimal monomial generator $M$ of $I^n$, the expression $M=L_1^{a_1}L_2^{a_2}\dots L_k^{a_k}$ is called \textbf{maximal} if for all expressions $M=L_1^{b_2}\dots L_k^{b_k}$, we have $(a_1,\dots ,a_k)>_{\lex} (b_1,\dots ,b_k)$. Given two minimal monomial generators $M, N$ of $I^n$ with maximal expressions $M=L_1^{a_1}L_2^{a_2}\dots L_k^{a_k}$ and $N=L_1^{c_1}L_2^{c_2}\dots L_k^{c_k}$ we set $M>N$ if $(a_1,\dots ,a_k)>_{\lex} (c_1,\dots ,c_k)$. We let $L^{(n)}$ denote the totally ordered set of minimal monomial generators of $I^n$.
\end{definition}

The following theorem follows from \cite[Theorem~4.12]{banerjee}.

\begin{theorem}\cite{banerjee}\label{thm:ordered colon ideals}
	Let $I= (L_1,\dots ,L_k)$ be an edge ideal and let $s\geq 1$. Let $L^{(s)}: L_1^{(s)}>L_2^{(s)}>\cdots >L_r^{(s)}$ be the order on the minimal monomial generators of $I^s$ induced by the order $L_1>\cdots >L_k$ as described in Definition~\ref{def:order on the generators of the powers of ideal}. Then for every $1\leq \ell \leq r-1$, $$((I^{s+1}, L_1^{(s)},\dots , L_{\ell}^{(s)}):L_{\ell+1}^{(s)})= ( (I^{s+1}:L_{\ell+1}^{(s)}), \text{ some variables}).$$
\end{theorem}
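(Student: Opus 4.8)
The plan is to unwind the colon ideal into a sum of principal ideals and reduce to a statement about two minimal generators of $I^s$.

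Write $M_i:=L_i^{(s)}$, so that $M_1>M_2>\cdots>M_r$ is the order of Definition~\ref{def:order on the generators of the powers of ideal} and $M_1,\dots,M_\ell$ are exactly the minimal generators of $I^s$ that precede $M_{\ell+1}$. Since taking the colon by a monomial distributes over sums of monomial ideals,
$$\bigl((I^{s+1},M_1,\dots,M_\ell):M_{\ell+1}\bigr)=(I^{s+1}:M_{\ell+1})+\sum_{j=1}^{\ell}(M_j:M_{\ell+1}),$$
and each $(M_j:M_{\ell+1})=(w_j)$ is principal, with $w_j:=M_j/\gcd(M_j,M_{\ell+1})$. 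Hence it is enough to exhibit, for every $j$ with $1\le j\le\ell$, a variable dividing $w_j$ that lies in the left-hand ideal; the left-hand ideal is then $(I^{s+1}:M_{\ell+1})$ together with these variables. Note that such a variable never comes from $I^{s+1}$: if $x$ is a variable then $xM_{\ell+1}$ has odd degree $2s+1$, whereas $I^{s+1}$ is generated in degree $2s+2$, so $x\notin(I^{s+1}:M_{\ell+1})$; the new variables are genuinely contributed by $M_1,\dots,M_\ell$. So the whole statement reduces to the following claim about minimal generators $M>N$ of $I^s$: there exist a variable $x$ with $\deg_x M>\deg_x N$ (equivalently $x\mid M/\gcd(M,N)$) and a minimal generator $N'$ of $I^s$ with $N'>N$ and $N'\mid xN$. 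Indeed, applying the claim with $M=M_j$ and $N=M_{\ell+1}$, the generator $N'$ is one of $M_1,\dots,M_\ell$, so $x\in(N':M_{\ell+1})\subseteq\bigl((I^{s+1},M_1,\dots,M_\ell):M_{\ell+1}\bigr)$ while $x\mid w_j$, exactly as needed.

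To prove the claim I would first note that, since every generator of $I^s$ has degree $2s$, none properly divides another, so the minimal generators of $I^s$ are precisely the monomials $e_1\cdots e_s$ where each $e_i$ is an edge of $G$; consequently a minimal generator $N'$ divides $xN$ if and only if $N'=xN/z$ for some variable $z\mid N$, that is, $N'$ is obtained from $N$ by changing one occurrence of $z$ into $x$ and the result is again a product of $s$ edges. Now compare the maximal expressions $M=L_1^{a_1}\cdots L_k^{a_k}$ and $N=L_1^{c_1}\cdots L_k^{c_k}$; as $M\ne N$ and $\deg M=\deg N$, some variable $x$ has $\deg_x M>\deg_x N$, and then $\sum_{i\,:\,x\in L_i}(a_i-c_i)=\deg_x M-\deg_x N>0$ forces an index $i$ with $x\in L_i$ and $a_i>c_i$; let $y$ be the other endpoint of $L_i$. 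The idea is to build $N'$ from an edge-decomposition of $N$ realizing its maximal expression by finding in it an edge $zy$ incident to $y$ (so $y$ must occur in $N$) and replacing it with $xy=L_i$; since $L_i$ is an edge of $G$ this yields $N'=xN/z$, still a product of $s$ edges and now with one more copy of $L_i$, and if the edge $zy$ and the index $i$ are chosen so that $N'$ still agrees with $N$ in the maximal-expression slots preceding $i$, then the maximal expression of $N'$ is lexicographically larger than that of $N$, so $N'>N$.

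The main obstacle is precisely this last construction: one must choose $x$, the index $i$, and especially the swapped edge $zy$ so that simultaneously (i) it exists, i.e. $y$ occurs in $N$ outside a forced prefix, which keeps $xN/z$ a product of edges, and (ii) the swap raises the intended coordinate of the maximal expression and no earlier one, which is what secures $N'>N$. Dealing with the degenerate configurations — for instance when $y$ fails to occur in $N$, or when every admissible swap would disturb an earlier coordinate — is where the genuine case analysis lives, and where one actually uses that $G$ is a simple graph together with the precise recipe defining the maximal expression. Once the claim is established, the reduction of the first paragraph finishes the proof.
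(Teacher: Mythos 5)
The paper does not actually prove this statement (it is quoted from Banerjee's Theorem~4.12), so I am judging your argument on its own merits. Your opening reduction is the right skeleton: write the left-hand side as $(I^{s+1}:M_{\ell+1})+\sum_{j\le\ell}(w_j)$ with $w_j=M_j/\gcd(M_j,M_{\ell+1})$ and absorb each principal piece. But the step ``it is enough to exhibit, for every $j$, a variable dividing $w_j$ that lies in the left-hand ideal'' overshoots, and the Claim you reduce everything to is false. Take $G=C_5=(v_1v_2v_3v_4v_5)$ with $L_1=v_1v_2>\cdots>L_5=v_5v_1$, $s=1$, $M=L_1$ and $N=L_4=v_4v_5$. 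Then $\deg_xM>\deg_xN$ only for $x\in\{v_1,v_2\}$, and the only edge other than $N$ itself dividing $v_1v_4v_5$ or $v_2v_4v_5$ is $v_5v_1=L_5<L_4$; so no $N'>N$ with $N'\mid xN$ exists. The theorem holds here anyway because $w=v_1v_2$ already lies in $(I^{2}:L_4)$, since $v_1v_2\cdot v_4v_5\in I^{2}$. This is the phenomenon your reduction ignores: whenever $w_j$ is divisible by a degree-two generator of $(I^{s+1}:M_{\ell+1})$ (for instance by an edge of $G$, so that $w_jM_{\ell+1}\in I\cdot I^{s}\subseteq I^{s+1}$), the ideal $(w_j)$ is absorbed with no variable at all, and in such cases no admissible variable need exist.

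The statement one must actually prove is a dichotomy: for each $j\le\ell$, \emph{either} $w_j\in(I^{s+1}:M_{\ell+1})$, \emph{or} $w_j$ is divisible by a variable $x$ such that $M_{j'}\mid xM_{\ell+1}$ for some $j'\le\ell$. Your maximal-expression construction aims only at the second horn, and even there you explicitly defer the case analysis (the choice of $x$, of the index $i$, and of the swapped edge $zy$) that is the real content of Banerjee's argument; the ``degenerate configuration'' you flag, where $y$ does not occur in $N$, is precisely where the proof must fall into the first horn instead. So the proposal outlines a plausible strategy but, as written, reduces the theorem to a false lemma and leaves the essential combinatorial step unproved.
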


As a consequence of the theorem above, the following result was obtained in \cite{banerjee}, which gives a sufficient condition for powers of edge ideals to have linear resolutions.

\begin{theorem}\cite[Corollary~5.3]{banerjee}\label{thm: banerjee main bound} Let $G$ be a graph with edge ideal $I=I(G)$. If $\reg(I)\leq 4$ and for all $s\geq 1$ and for each minimal monomial generator $m$ of $I^s$, $\reg(I^{s+1}:m)\leq 2$, then $I^t$ has linear minimal free resolution for every $t\geq 2$.
\end{theorem}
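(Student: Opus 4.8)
The plan is to prove, by induction on $t$, that $\reg(I^t)\le 2t$ for every $t\ge 2$; since each $I^t$ is minimally generated in degree $2t$ one always has $\reg(I^t)\ge 2t$, so this gives $\reg(I^t)=2t$, which is exactly the assertion that $I^t$ has a linear minimal free resolution. The induction will rest on the recursive inequality $\reg(I^{s+1})\le\max\{2s+2,\ \reg(I^s)\}$ for all $s\ge 1$: granting it, the base case $t=2$ reads $\reg(I^2)\le\max\{4,\reg(I)\}=4=2\cdot 2$ by the hypothesis $\reg(I)\le 4$, and if $\reg(I^s)\le 2s$ for some $s\ge 2$ then $\reg(I^{s+1})\le\max\{2s+2,2s\}=2(s+1)$. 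So the real content is the recursive inequality, and the hypothesis $\reg(I)\le 4$ is used only to start the induction.

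To prove the inequality, fix $s\ge 1$ and let $G_1>G_2>\cdots>G_r$ be the minimal monomial generators of $I^s$, ordered as in Definition~\ref{def:order on the generators of the powers of ideal} (the order induced by some fixed ordering $L_1>\cdots>L_k$ of the generators of $I$); note $\deg G_\ell=2s$ for all $\ell$. I would then introduce the chain of monomial ideals $K_\ell:=I^{s+1}+(G_1,\dots,G_\ell)$ for $0\le\ell\le r$, so that $K_0=I^{s+1}$ and, since $I^{s+1}\subseteq I^s$ and $(G_1,\dots,G_r)=I^s$, also $K_r=I^s$. For each $\ell$ with $1\le\ell\le r$, apply Theorem~\ref{thm:regularity bound by colon ideals} to $K_{\ell-1}$ with the monomial $m=G_\ell$ of degree $2s$, using that $(K_{\ell-1},G_\ell)=K_\ell$, to get $\reg(K_{\ell-1})\le\max\{\reg(K_{\ell-1}:G_\ell)+2s,\ \reg(K_\ell)\}$.

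The key step is to show $\reg(K_{\ell-1}:G_\ell)\le 2$. Here $K_{\ell-1}:G_\ell=(I^{s+1},G_1,\dots,G_{\ell-1}):G_\ell$, which by Theorem~\ref{thm:ordered colon ideals} equals $(I^{s+1}:G_\ell)$ together with some variables (for $\ell=1$ it is just $I^{s+1}:G_1$); applying Lemma~\ref{lem:regularity after adding variables} once for each adjoined variable gives $\reg(K_{\ell-1}:G_\ell)\le\reg(I^{s+1}:G_\ell)$, and this is $\le 2$ by hypothesis because $G_\ell$ is a minimal monomial generator of $I^s$. Substituting, $\reg(K_{\ell-1})\le\max\{2s+2,\ \reg(K_\ell)\}$ for every $\ell$, and telescoping from $\ell=r$ down to $\ell=1$ gives $\reg(I^{s+1})=\reg(K_0)\le\max\{2s+2,\ \reg(K_r)\}=\max\{2s+2,\ \reg(I^s)\}$, completing the induction. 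Since every ingredient here is one of the quoted results, there is no genuine obstacle; the only things that must be arranged correctly are the interpolating chain $I^{s+1}=K_0\subseteq K_1\subseteq\cdots\subseteq K_r=I^s$ (so the inequality can be propagated all the way from $I^{s+1}$ to $I^s$) and the verification that Theorem~\ref{thm:ordered colon ideals} applies at each rung — which it does precisely because the $G_\ell$ were listed in the order of Definition~\ref{def:order on the generators of the powers of ideal}.
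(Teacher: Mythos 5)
Your proof is correct and is essentially the intended argument: the paper quotes this result from Banerjee without proof, but the telescoping chain $\reg(I^{s+1})\le\max\{2s+2,\reg(I^{s+1},M_1,\dots,M_\ell)\}$ built from Theorem~\ref{thm:regularity bound by colon ideals}, Theorem~\ref{thm:ordered colon ideals} and Lemma~\ref{lem:regularity after adding variables}, ending at $\reg(I^s)$ and closed by induction, is exactly the mechanism the paper itself deploys in Propositions~\ref{prop:10-family} and \ref{prop:0-family}. No gaps.
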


The following classical result of Fr\"oberg \cite{froberg} gives a combinatorial characterization of edge ideals which have linear minimal free resolutions.
\begin{theorem}\cite[Theorem~1]{froberg}\label{thm: frobergs theorem} The minimal free resolution of $I(G)$ is linear if and only if the complement graph $G^c$ is chordal.
\end{theorem}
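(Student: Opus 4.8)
The plan is to pass through the Stanley--Reisner dictionary. The ideal $I(G)$ is the Stanley--Reisner ideal of the independence complex $\Delta := \mathrm{Ind}(G)$, whose faces are the independent sets of $G$; equivalently, since the independent sets of $G$ are exactly the cliques of $G^c$, $\Delta$ is the flag (clique) complex of $G^c$. For any $W\subseteq V(G)$, the restriction $\Delta|_W$ is the independence complex of $G[W]$, i.e.\ the clique complex of $(G[W])^c=G^c[W]$. I would then proceed in two stages: first translate ``$I(G)$ has a linear resolution'' into a homological vanishing statement about these restrictions via Hochster's formula, and then match that statement with chordality of $G^c$.

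For the first stage: because $I(G)$ is generated in degree $2$, its minimal free resolution is linear if and only if $\reg I(G)=2$, which (using that syzygies of a quadratic monomial ideal cannot sit in too low a degree) is equivalent to $\beta_{i,j}(S/I(G))=0$ for all $j\geq i+2$. Hochster's formula gives $\beta_{i,j}(S/I_\Delta)=\sum_{|W|=j}\dim_k \tilde H_{j-i-1}(\Delta|_W;k)$, so this vanishing is equivalent to $\tilde H_\ell(\mathrm{Ind}(G[W]);k)=0$ for every $W\subseteq V(G)$ and every $\ell\geq 1$. In complementary language: for every induced subgraph $H'$ of $H:=G^c$, the clique complex $\mathrm{Cl}(H')$ has vanishing reduced homology in all positive degrees.

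For the second stage I would show that this condition on $H$ is equivalent to $H$ being chordal. The ``only if'' direction is by contraposition: an induced cycle $C_m$ in $H$ with $m\geq 4$ is triangle-free, so $\mathrm{Cl}(C_m)=C_m$ is homeomorphic to a circle and has $\tilde H_1\neq 0$; taking $W=V(C_m)$ defeats the condition. For the ``if'' direction, assume $H$ is chordal; since induced subgraphs of chordal graphs are chordal, it is enough to show $\tilde H_\ell(\mathrm{Cl}(H);k)=0$ for all $\ell\geq 1$, which I would do by induction on $|V(H)|$. By Dirac's theorem a chordal graph has a simplicial vertex $v$, so $N_H(v)$ is a clique. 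Then $\mathrm{Cl}(H)$ is the union of $\mathrm{Cl}(H-v)$ and the closed star of $v$; the closed star is a cone with apex $v$ and hence contractible, while the intersection of the two pieces is exactly the full simplex on the vertex set $N_H(v)$ (this is where both chordality and the flag structure are used) and hence contractible. The Mayer--Vietoris sequence then yields $\tilde H_\ell(\mathrm{Cl}(H);k)\cong \tilde H_\ell(\mathrm{Cl}(H-v);k)$ for $\ell\geq 1$, which vanishes by the inductive hypothesis.

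I expect the crux to be the ``if'' direction of the second stage---recognizing the correct Mayer--Vietoris decomposition of $\mathrm{Cl}(H)$ and verifying that the overlap is a simplex, which relies precisely on $v$ being simplicial and on $\mathrm{Cl}(-)$ being a flag complex; the case $\ell=1$ also deserves a separate glance, since $\tilde H_0$ of the overlap appears in the exact sequence but is killed by its contractibility. An alternative first stage would be to invoke the Eagon--Reiner theorem and reduce to Cohen--Macaulayness of the Alexander dual of $\Delta$, but that merely replaces one combinatorial-topological task with another, so I would prefer the direct Hochster computation.
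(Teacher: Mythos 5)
The paper does not prove this statement; it is quoted verbatim from Fr\"oberg's paper as a known result, so there is no internal proof to compare against. Your argument is a correct, self-contained proof along the standard lines: the translation of linearity into $\reg I(G)=2$, the Hochster reformulation as vanishing of $\tilde H_{\ell}$, $\ell\geq 1$, of all induced subcomplexes of the clique complex of $G^c$, the induced-cycle obstruction for the forward direction, and the Mayer--Vietoris induction on a simplicial vertex (where the overlap $\mathrm{Cl}(H-v)\cap\overline{\mathrm{st}}(v)$ is the full simplex on $N_H(v)$ precisely because $v$ is simplicial and the complex is flag) for the converse are all sound; you also correctly dispose of the degenerate cases (isolated $v$, the $\ell=1$ boundary term). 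The only cosmetic remark is that textbook treatments such as Herzog--Hibi instead route the converse through Eagon--Reiner and clique trees of chordal graphs, which you rightly note is an equivalent amount of work; your direct topological induction is arguably cleaner and, as a bonus, makes the field-independence of the statement transparent.
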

\subsection{Even-connection and gap-free graphs}
	In \cite[Definition~6.2]{banerjee} the concept of even-connectedness was defined in order to describe in combinatorial terms the minimal generators of certain colon ideals of powers of edge ideals.
\begin{definition}
	Let $G$ be a graph. Two vertices $u$ and $v$ in $G$ are said to be \textbf{even-connected} with respect to an $s$-fold product $e_1\dots e_s$ of edges of $G$ if there is a sequence of vertices $p_0,p_1,\dots , p_{2\ell+1}$ for some $\ell \geq 1$ in $G$ such that
	\begin{enumerate}[$(i)$]
		\item  $p_ip_{i+1}\in G$ for all $0\leq i \leq 2\ell$,
		\item $p_0=u$ and $p_{2\ell+1}=v$,
		\item  for all $0\leq j\leq \ell-1$, $p_{2j+1}p_{2j+2}=e_i$ for some $i$, and
		\item  for all $k$, $\lvert \{j \mid \{p_{2j+1}, p_{2j+2}\}=e_k\}  \rvert \leq \lvert \{t \mid e_t=e_k\} \rvert$.
		\end{enumerate}
		In such case, $p_0, p_1, \dots ,  p_{2\ell+1}$ is called an \textbf{even-connection} between $u$ and $v$ with respect to $e_1\dots e_s$.
\end{definition}

\begin{theorem}\cite[Theorems 6.1, 6.5 and 6.7]{banerjee}\label{thm:edges of associated graph to colon ideal}
	Let $G$ be a graph with edge ideal $I=I(G)$ and let $s\geq 1$. Let $M=e_1\dots e_s$ be a minimal monomial generator of $I^s$ where $e_1,\dots , e_s$ are some edges of $G$. Then $(I^{s+1}:M)$ is minimally generated by monomials of degree $2$, and $uv$ is a minimal generator of  $(I^{s+1}:M)$ if and only if either $uv\in G$ or $u$ and $v$ are even-connected with respect to $M$.
\end{theorem}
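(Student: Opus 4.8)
The plan is to prove both implications by turning the membership $wM\in I^{s+1}$ into a divisibility statement about monomials and then into a statement about walks in $G$. Writing $M=e_1\cdots e_s$, a monomial $w$ lies in $(I^{s+1}:M)$ exactly when $w\,e_1\cdots e_s$ is divisible, as a monomial, by a product $f_1\cdots f_{s+1}$ of $s+1$ (not necessarily distinct) edges of $G$. So the assertion is that the monomials with this divisibility property are precisely the multiples of edges of $G$ and of even-connected pairs, and that the minimal ones have degree $2$.

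For the ``if'' direction I would argue directly. If $uv\in E(G)$ then $uv\cdot M=(uv)e_1\cdots e_s$ is itself a product of $s+1$ edges, so $uv\in(I^{s+1}:M)$. If $u$ and $v$ are even-connected via $p_0=u,p_1,\dots,p_{2\ell+1}=v$, I regroup the $2\ell+1$ edges of the walk: the even-indexed steps $p_0p_1,\,p_2p_3,\,\dots,\,p_{2\ell}p_{2\ell+1}$ are $\ell+1$ edges of $G$ whose product, as a monomial, equals $uv\cdot(p_1p_2)(p_3p_4)\cdots(p_{2\ell-1}p_{2\ell})$. By condition $(iii)$ each $p_{2j-1}p_{2j}$ equals some $e_i$, and by condition $(iv)$ the multiset $\{p_1p_2,\dots,p_{2\ell-1}p_{2\ell}\}$ is a sub-multiset of $\{e_1,\dots,e_s\}$; hence $(p_1p_2)\cdots(p_{2\ell-1}p_{2\ell})$ divides $M$, with complementary factor a product of the other $s-\ell$ of the $e_i$. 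Multiplying, $uv\cdot M$ is divisible by a product of $(\ell+1)+(s-\ell)=s+1$ edges of $G$, so $uv\in(I^{s+1}:M)$; and since $M$, $uM$, $vM$ all have degree less than $2s+2$, the generating degree of $I^{s+1}$, none of $1,u,v$ lies in $(I^{s+1}:M)$, so $uv$ is in fact a minimal generator.

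For the ``only if'' direction, which also yields the degree-$2$ claim, let $w$ be a minimal generator of $(I^{s+1}:M)$ and fix edges with $f_1\cdots f_{s+1}\mid w\,e_1\cdots e_s$, chosen so as to cancel a maximal common sub-multiset of edges against $e_1,\dots,e_s$. After that cancellation we are left with edge-disjoint multisets $F'$ and $E'$ with $E'$ a sub-multiset of $\{e_1,\dots,e_s\}$, with $|F'|\ge|E'|+1$, and with $F'\mid w\,E'$. If the set of variables dividing $w$ is not independent in $G$ then $w$ is already a multiple of an edge and we are done, so assume it is independent. A degree count then produces a vertex with strictly larger $F'$-degree than $E'$-degree, which therefore divides $w$; starting there I would trace a trail in the multigraph $F'\sqcup E'$ that begins with an $F'$-edge and alternates $F'$-edge, $E'$-edge, $F'$-edge, $\dots$, using each edge of $E'$ at most once and ending with an $F'$-edge at another vertex dividing $w$. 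Here independence of the support of $w$ forces every intermediate $F'$-edge to land at a vertex not dividing $w$, and at such a vertex the divisibility $F'\mid wE'$ makes an $E'$-edge available to continue the alternation. Reading off the endpoints $u,v$ of the trail, it is then an even-connection of $u$ and $v$ with respect to $M$ — condition $(iv)$ being automatic since the $E'$-edges used form a sub-multiset of $\{e_1,\dots,e_s\}$ — and $uv$ divides $w$, so minimality of $w$ gives $w=uv$ and $\deg w=2$.

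The step I expect to be the main obstacle is the trail extraction just described: one must show that the divisibility $F'\mid w\,E'$ between edge-disjoint multisets can always be organized into a single trail whose two ``deficient'' endpoints divide $w$ and that alternates $F'$-edge / $E'$-edge throughout, rather than breaking up into closed walks or into pieces that never reach two variables of $w$. The mechanism I would use is an exchange argument: whenever the alternating trail runs out of an available $F'$-edge, the stuck configuration should let one re-cancel some edge of $F'$ against an edge of $E'$, contradicting the maximality of the cancellation chosen at the outset. Carrying this through — along with the bookkeeping for repeated edges and the degenerate possibility $u=v$ — is the technical core; once the trail exists, checking conditions $(i)$--$(iv)$ of even-connection and concluding minimality is routine.
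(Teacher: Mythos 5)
The paper offers no proof of this theorem --- it is imported verbatim from Banerjee --- so your argument can only be judged on its own terms. Your ``if'' direction is correct and complete: splitting the walk $p_0,\dots,p_{2\ell+1}$ into the $\ell+1$ edges $p_{2j}p_{2j+1}$ and the $\ell$ edges $p_{2j+1}p_{2j+2}$, and using condition $(iv)$ to see that the latter form a sub-multiset of $\{e_1,\dots,e_s\}$, exhibits $uv\cdot M$ as a product of $s+1$ edges; and the degree count ruling out $1$, $u$, $v$ from the colon ideal both gives minimality and shows that no generator has degree less than $2$.

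The ``only if'' direction, however, has a genuine gap at the step you yourself single out, and your proposed repair does not close it. Two points. First, the assertion that ``independence of the support of $w$ forces every intermediate $F'$-edge to land at a vertex not dividing $w$'' is false: independence controls only the first $F'$-edge, whose tail lies in $\operatorname{supp}(w)$; a later $F'$-edge departs from a vertex outside $\operatorname{supp}(w)$ and may land anywhere. Second, and fatally for the greedy trail, at a vertex just reached by an $E'$-edge nothing guarantees an unused $F'$-edge: the divisibility $\prod F'\mid w\prod E'$ bounds $\deg_{F'}(x)$ from above by $\deg_w(x)+\deg_{E'}(x)$ but gives no lower bound, so the trail can die immediately after any $E'$-step; and a stuck configuration does not visibly produce a common edge of $F'$ and $E'$ to re-cancel, so the appeal to maximality of the cancellation has no purchase. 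A repair that does work: the divisibility is witnessed by an injection from the $2|E'|+2$ endpoint-slots of $F'$ into the variable-slots of $w\cdot\prod E'$ preserving the underlying variable; form the auxiliary graph whose vertices are the members of $F'\sqcup E'$ together with one pendant vertex for each $F'$-slot sent into $w$, with an edge whenever two slots are matched. Every $F'$-vertex has degree $2$, every $E'$-vertex has degree at most $2$, and there are at least two pendants; a count of odd-degree vertices shows that some path component joins two pendants, and reading that path off in $G$ yields exactly the alternating $F'$, $E'$, \dots, $F'$ walk between two variables of $w$, with condition $(iv)$ automatic because each member of the multiset $E'$ is traversed at most once. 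Until an argument of this kind is supplied, the degree-$2$ generation claim and the forward implication remain unproved.
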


Consequently, if $G$ is a graph and $M$ is a minimal generator of $I(G)^s$ for some $s\geq 1$, then $(I(G)^{s+1}:M)^{\pol}$ is the edge ideal of some associated graph $G'$. When $G$ is a gap-free graph, some combinatorial properties of $G'$ are summarized in the next result.

\begin{theorem}\cite[Lemmas 6.14 and 6.15]{banerjee}\label{thm: associated graph to colon ideal}
		Let $G$ be a gap-free graph with edge ideal $I=I(G)$ and let $e_1,\dots ,e_s$ be some edges of $G$ where $s\geq 1$. Then the graph $G'$ which is associated to $(I^{s+1}:e_1\dots e_s)^{\pol}$ is gap-free as well. Also if $C_n=(u_1\dots u_n)$ is a cycle on $n\geq 5$ vertices such that $C_n^c$ is an induced subgraph of $G'$, then $C_n^c$ is an induced subgraph of $G$ as well.
\end{theorem}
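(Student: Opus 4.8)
The plan is to work entirely through Theorem~\ref{thm:edges of associated graph to colon ideal}, which reduces everything to the combinatorics of even-connection: the edges of $G'$ lying among the original variables are exactly the edges of $G$ together with the even-connected pairs with respect to $M=e_1\cdots e_s$, while polarization contributes only pendant vertices, one for each variable $u$ with $u^2\in(I^{s+1}:M)$, that is, each $u$ even-connected to itself. A pendant vertex has degree $1$, so it cannot appear in an induced anticycle $C_n^c$ with $n\geq 5$, whose minimum degree is $n-3\geq 2$; and a self-even-connection at $u$ forces $u$ to have genuine neighbours, after which the pendant vertices are disposed of by a routine separate check. So it suffices to prove (a) the induced subgraph of $G'$ on the original variables is gap-free, and (b) every induced $C_n^c$ with $n\geq 5$ in that subgraph already lies inside $G$.

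The basic manoeuvre is this: if $u$ and $v$ are even-connected via a walk $u=p_0,p_1,\dots,p_{2\ell+1}=v$ and $w$ is any vertex with $wp_1\in G$, then $w,p_1,p_2,\dots,p_{2\ell+1}=v$ is again an even-connection, because prepending one $G$-edge to the walk $p_1,\dots,p_{2\ell+1}$ (which has even length) preserves both the odd total length and the requirement that the product edges occupy positions $(1,2),(3,4),\dots$, while leaving the multiset of product edges unchanged, so condition~$(iv)$ survives. Using this I would first prove a lemma: if $G$ is gap-free, $u$ and $v$ are even-connected with respect to $M$, and $ab\in G$ with $\{a,b\}\cap\{u,v\}=\emptyset$, then $G'$ has an edge joining $\{a,b\}$ to $\{u,v\}$, and moreover that edge can be taken to be a $G$-edge meeting $u$ or an even-connection meeting $v$. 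One compares $ab$ with the first walk edge $up_1$: if they are disjoint, gap-freeness of $G$ produces one of $ua,ub$ (a $G$-edge to $u$, done) or $ap_1,bp_1$ (then the manoeuvre above gives $a$ or $b$ even-connected to $v$); if they meet, then $p_1\in\{a,b\}$ and one argues the same way.

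With this lemma, gap-freeness of $G'$ reduces to the case of two disjoint even-connections $u\!\sim\!v$ and $c\!\sim\!d$, realized by walks $W,W'$ (two $G$-edges are handled by gap-freeness of $G$, the mixed case by the lemma). I would compare the first edges $up_1$ of $W$ and $cr_1$ of $W'$: if these meet, or if gap-freeness of $G$ yields one of $uc$, $ur_1$, $p_1c$, then a single prepend/splice step that uses only one of the two walks' product edges produces the required edge of $G'$. The one stubborn case is when gap-freeness only gives the edge $p_1r_1$ between the two ``interior'' vertices, for then the natural splice of $W$ and $W'$ would use some product edge twice and may violate condition~$(iv)$. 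The anticycle statement~(b) is in the same spirit: if an edge $w_aw_b$ of the induced $C_n^c$ were only an even-connection, then a product edge $e_i=a_ib_i$ on its connecting walk forces every pair in $N_G(a_i)\times N_G(b_i)$ to be an edge of $G'$, and, combined with gap-freeness and the fact that each vertex of $C_n^c$ misses only its two cyclic neighbours, this should manufacture a forbidden edge $w_jw_{j\pm 1}$ of $G'$, a contradiction.

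I expect the crux of the whole argument to be exactly the multiplicity bookkeeping in the stubborn case of part~(a) and its analogue in part~(b): the cure is presumably to take $W$ and $W'$ to be even-connections with as few product edges as possible, to show that such minimal even-connections in a gap-free graph are rigid enough — few repeated vertices, controlled overlap between consecutive product edges — to exclude the bad configuration, and then to run both arguments with this minimality built in. Everything else (degenerate or collapsing witnesses, coinciding product edges, the pendant-vertex checks) should be routine case analysis.
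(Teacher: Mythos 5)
This theorem is imported verbatim from Banerjee (Lemmas 6.14 and 6.15); the paper offers no proof of its own, so your proposal can only be judged against the source and on its own terms. Your framework is the right one: reduce everything to even-connections via Theorem~\ref{thm:edges of associated graph to colon ideal}, note that polarization contributes only pendant vertices (which cannot lie on an induced $C_n^c$ with $n\geq 5$), and use the move of replacing the first vertex $p_0$ of an even-connection by any $G$-neighbour of $p_1$, which is indeed valid and is the workhorse of the whole argument. The case of a $G$-edge against an even-connection goes through exactly as you describe.

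The gap is the one you flag yourself, and it is not routine. For two even-connections $u=p_0,\dots,p_{2\ell+1}=v$ and $c=r_0,\dots,r_{2m+1}=d$, when gap-freeness applied to $up_1$ and $cr_1$ returns only the interior edge $p_1r_1$, the spliced walk $v,p_{2\ell},\dots,p_1,r_1,\dots,r_{2m+1}$ uses the union of the two walks' product-edge multisets, and condition $(iv)$ can genuinely fail: each walk separately may already use some $e_k$ at its full multiplicity in $M$. Your proposed cure --- take each even-connection with as few product edges as possible --- does not obviously help, since minimality constrains each walk separately and does nothing to stop the two walks from drawing on the same copies of $e_k$; nor can the spliced walk always be shortened in place, because cancelling a product edge that recurs with opposite orientations changes the parity of the walk. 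What closes this case is a different splice: either the first product edges $p_1p_2$ and $r_1r_2$ share a vertex, in which case one reroutes into a prefix or suffix of a \emph{single} walk (whose product edges form a sub-multiset of that walk's, so $(iv)$ is free), or they are disjoint, in which case further applications of gap-freeness to the product edges themselves yield cross-edges permitting a short even-connection such as $u,p_1,p_2,r_1,r_2,\dots,d$ that borrows only one product edge from the first walk, and the residual conflict again forces a vertex coincidence that reroutes through one walk. None of this is carried out in your write-up, and the analogous bookkeeping is equally absent from your sketch of the anticycle statement. Since this is precisely the content of the cited lemmas, the proposal as it stands is a plan rather than a proof.
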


The next theorem is one of the main graph theoretic tools that will be used in the sequel.
\begin{theorem}\cite[Theorems 2 and 3]{chung}\label{thm: gap free and clique number at least 3}
	Let  $G$ be a gap-free graph with no isolated vertices. 
	\begin{enumerate}[$(i)$]
	\item If $\omega(G)\geq 3$, then $G$ has a dominating clique on $\omega(G)$ vertices.
         \item If $w(G)=2$ and  $G$ is not bipartite, then $G$ can be obtained  from  a five-cycle  by vertex multiplication. 
	\end{enumerate}
	\end{theorem}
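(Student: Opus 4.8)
The plan is to treat the two parts separately, in each case using gap-freeness only through the elementary fact that two vertex-disjoint edges of $G$ must be joined by at least one edge. For $(i)$, I would argue by contradiction: assume no maximum clique of $G$ is dominating, and choose a maximum clique $K$ for which the closed neighborhood $N[K] = K \cup N(K)$ has the largest possible size. Since $K$ is not dominating there is a vertex $v \notin N[K]$, and since $G$ has no isolated vertices $v$ has a neighbor $w$, with $w \notin K$ because $v$ has no neighbor in $K$. The first step is a local lemma: for any two distinct $a,b \in K$ the disjoint edges $\{v,w\}$ and $\{a,b\}$ cannot form a gap, and as $va,vb \notin G$ we must have $wa \in G$ or $wb \in G$; hence $w$ is non-adjacent to at most one vertex of $K$. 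Since $K \cup \{w\}$ cannot be a clique (it would have $\omega(G)+1$ vertices), $w$ misses exactly one vertex $a_0 \in K$, so $K' = (K \setminus \{a_0\}) \cup \{w\}$ is again a maximum clique and $v \in N[K']$. It then remains to show $N[K] \subseteq N[K']$, which yields the contradiction $|N[K']| > |N[K]|$. For this, suppose $x \in N[K] \setminus N[K']$; one checks $x \notin K$, that $a_0$ is the only neighbor of $x$ in $K$, and that $x \not\sim w$, whereupon gap-freeness applied to $\{x,a_0\}$ and $\{v,w\}$ forces $xv \in G$. But then $x$ is a neighbor of $v$, so the local lemma (with $x$ in place of $w$) shows $x$ is adjacent to at least $\omega(G)-1 \geq 2$ vertices of $K$, contradicting that $a_0$ is its only neighbor in $K$.

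For $(ii)$, I would first produce an induced five-cycle. Since $G$ is not bipartite it has an odd cycle; a shortest one $C$ has no chord (a chord produces a shorter odd cycle), hence is induced, and $|C| \neq 3$ because $\omega(G)=2$. If $|C| \geq 7$, then two edges of $C$ at cyclic distance at least $2$, such as $v_1v_2$ and $v_4v_5$, are disjoint with no edge between them (as $C$ is induced), a gap — impossible. So $C = (u_1 \dots u_5)$ is an induced $C_5$. Next, for each $w \notin V(C)$ the set $N(w) \cap V(C)$ is independent in $C$ by triangle-freeness, hence has at most two elements, and if two they form a pair $\{u_{i-1},u_{i+1}\}$ for some $i$ (indices mod $5$). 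I would rule out the other cases: if $N(w) \cap V(C) = \emptyset$, a neighbor $z$ of $w$ must — by gap-freeness of $\{w,z\}$ against each of the five edges of $C$ — be adjacent to at least three vertices of $C$, two of which are adjacent, giving a triangle; if $N(w) \cap V(C) = \{u_i\}$, then $\{w,u_i\}$ and $\{u_{i+2},u_{i+3}\}$ form a gap. Hence every vertex of $G$ lies in exactly one of the disjoint sets $V_i = \{u_i\} \cup \{w \notin V(C) : N(w) \cap V(C) = \{u_{i-1},u_{i+1}\}\}$, and it remains to verify, using triangle-freeness and gap-freeness, that each $V_i$ is independent, that all edges between $V_i$ and $V_{i+1}$ are present, and that no edge joins $V_i$ and $V_{i+2}$; this exhibits $G$ as the graph obtained from $C_5$ by multiplying each $u_i$ by $|V_i|$.

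I expect the main obstacle to be the clique-swapping step in $(i)$: one must rule out any loss of domination when passing from $K$ to $K'$, and the slightly delicate point is that a would-be lost vertex is forced to be adjacent to $v$ and therefore, by the very same local lemma, cannot have $a_0$ as its unique neighbor in $K$ once $\omega(G) \geq 3$ — which is exactly where the hypothesis $\omega(G)\geq 3$ enters. The reasoning in $(ii)$ is comparatively routine once the induced $C_5$ is in hand; the only care needed there is the bookkeeping of indices modulo $5$ and the harmless degenerate cases in which the edges invoked for a gap argument already lie on $C_5$.
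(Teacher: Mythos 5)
The paper does not prove this statement; it is quoted from Chung--Gy\'arf\'as--Trotter--Tuza \cite{chung} without proof, so there is no internal argument to compare against. Your proof is correct and complete in outline. In part $(i)$ the extremal choice of a maximum clique $K$ maximizing $|N[K]|$, the observation that a vertex $w$ adjacent to $v\notin N[K]$ misses at most one vertex of $K$, and the final step forcing $xv\in G$ and then invoking the same local lemma to contradict ``$a_0$ is the unique neighbor of $x$ in $K$'' all check out, and you correctly isolate $\omega(G)\geq 3$ as the hypothesis that makes the last contradiction bite. In part $(ii)$ the reduction of a shortest odd cycle to an induced $C_5$, the classification of $N(w)\cap V(C_5)$ as exactly a distance-two pair, and the verification that the resulting classes $V_i$ reproduce the blow-up structure of $C_5$ are all routine and carried out (or correctly flagged as routine) as you describe; the only degenerate cases, $w=u_i$ or $w'=u_{i+1}$ when proving completeness between consecutive classes, are handled by the definitions of the $V_i$.
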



\section{Colon ideals of powers of edge ideals}

In this section, we first prove some graph theoretic results regarding the structure of (gap, diamond)-free graphs. Then we analyze the colon ideals of the form $(I(G)^{s+1}:m)$ where $m$ is a minimal monomial generator of $I(G)^s$.
\begin{proposition}\label{prop: multiplying vertices}
	Let $H$ be a graph which is obtained from $G$ by multiplying a vertex $v\in V(G)$ by some $k\geq 2$.
	\begin{enumerate}[$(i)$]
		\item $G$ is gap-free if and only if $H$ is gap-free.
		\item $G$ and $H$ have the same clique number.
		\item If $G$ is diamond-free, then $H$ is diamond-free if and only if $v$ does not belong to any triangle of $G$.
	\end{enumerate}
\end{proposition}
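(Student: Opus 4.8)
The plan is to prove each of the three statements by carefully tracking what the new vertices $v_1,\dots,v_k$ replacing $v$ ``see'' in $H$: by the definition of vertex multiplication, each $v_i$ is adjacent to exactly $N_G(v)$, the $v_i$ are pairwise nonadjacent, and $H - \{v_1,\dots,v_k\} = G - v$. The key structural observation, used throughout, is that if one takes any induced subgraph $W$ of $H$ and replaces all of the $v_i$ lying in $W$ by a single copy of $v$ (deleting the rest), one obtains an induced subgraph of $G$ whose nonedges among the remaining vertices are in bijection with those of $W$; conversely induced subgraphs of $G$ lift to $H$. I would isolate this as a short preliminary paragraph so the three parts become short case analyses.

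For part $(i)$, suppose $H$ has a gap, i.e. an induced $2K_2$ on four vertices $\{a,b,c,d\}$ with edges $ab,cd$ only. At most one of the two edges can contain a $v_i$, and since the $v_i$ are mutually nonadjacent, at most one of the four vertices is among the $v_i$; replacing that one (if present) by $v$ yields four vertices of $G$ with the same adjacency pattern (here I use that $v_i$ and $v$ have the same neighbors outside $\{v_1,\dots,v_k\}$), hence a gap in $G$. The converse is immediate since $G - v$ is an induced subgraph of $H$ and adding $v$ back as $v_1$ reproduces any gap of $G$ not involving $v$, while a gap of $G$ through $v$ lifts by using $v_1$ in place of $v$. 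For part $(ii)$: any clique of $H$ contains at most one $v_i$ (nonadjacency), so replacing it by $v$ gives a clique of $G$ of the same size, so $\omega(H)\le\omega(G)$; conversely a clique of $G$ is a clique of $H$ after the substitution $v\mapsto v_1$, so $\omega(H)\ge\omega(G)$.

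Part $(iii)$ is where the real content lies and is the step I expect to be the main obstacle, because now we must both produce diamonds and forbid them. First assume $v$ lies in a triangle $vxy$ of $G$; then $\{v_1,v_2,x,y\}$ induces in $H$ the edges $v_1x, v_1y, v_2x, v_2y, xy$ (and not $v_1v_2$), which is exactly a diamond, so $H$ is not diamond-free. Conversely assume $G$ is diamond-free and $v$ is in no triangle, and suppose for contradiction $H$ contains an induced diamond $D$ on $\{a,b,c,d\}$ with the nonedge, say, between $b$ and $d$. Since the $v_i$ are pairwise nonadjacent, $D$ contains at most two of them, and if it contains two they must be the two nonadjacent vertices $b,d$. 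In the case $D$ contains at most one $v_i$, replace it by $v$: we get an induced diamond in $G$, contradiction. In the remaining case $b=v_i$, $d=v_j$ with $i\ne j$, and $a,c\notin\{v_1,\dots,v_k\}$; then $a$ and $c$ are each adjacent to both $v_i$ and $v_j$, hence (same-neighborhood property) each is adjacent to $v$ in $G$, and $ac\in G$, so $\{a,c,v\}$ is a triangle of $G$ containing $v$ --- contradicting the hypothesis. This exhausts all cases and completes the argument; the one subtlety to state carefully is that a diamond has a unique pair of nonadjacent vertices, which pins down where the $v_i$ can sit.
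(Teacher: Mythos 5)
Your proof is correct and follows essentially the same route as the paper's: collapse the independent copies $v_1,\dots,v_k$ back to $v$ using that they all share the neighborhood $N(v)$, note that a gap/clique/diamond can contain very few of them, and in part $(iii)$ extract a triangle through $v$. The only remark worth making is in part $(i)$: the claim that at most one of the four vertices of the $2K_2$ lies in $\{v_1,\dots,v_k\}$ does not follow from their mutual nonadjacency alone (the two endpoints could sit on different edges of the $2K_2$); it follows from the shared-neighborhood property, which you do invoke elsewhere, so this is a matter of citing the right reason rather than a gap.
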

\begin{proof} Suppose that $v$ is replaced with the independent set $\{v_1,\dots ,v_k\}$.
	
	$(i)$ If $H$ is gap-free, then $G$ is gap-free since $G$ can be considered as an induced subgraph of $H$. Assume for a contradiction $G$ is gap-free but there is a gap between $ab, cd\in H$. Observe that since the vertices $v_1,\dots , v_k$ have the same neighbors in $H$, at most one of them belongs to $\{a, b, c, d\}$. Then this gives a gap in $G$.
	
	$(ii)$ Since every clique of $H$ contains at most one vertex of $\{v_1, \dots ,v_k\}$, the result follows.
	
	$(iii)$ Suppose that $G$ is diamond-free. If $v$ belongs to a triangle of $G$, then $H$ clearly has an induced diamond. Conversely, suppose that $H$ has an induced diamond with edges $ab, ad, bd, bc, cd$. Since $G$ is diamond-free, without loss of generality we may assume that either $v_1=a$ or $v_1=d$. If $v_1=a$, then $(vbd)$ is a triangle in $G$. If $v_1=d$, then $(avb)$ is a triangle in $G$.
\end{proof}
\begin{lemma}\label{lem: diamond free structure}
	Let $G$ be a gap-free and diamond-free graph with $\omega(G)\geq 3$ and with no isolated vertices. Then there is a dominating clique $K_{\omega(G)}$ such that every vertex outside of $K_{\omega(G)}$ is adjacent to exactly one vertex of $K_{\omega(G)}$. Moreover the following statements hold.
	\begin{enumerate}[$(i)$] 
		\item If $\omega(G)\geq 4$, then $G-K_{\omega(G)}$ is independent and in particular, $G^c$ is chordal.
		\item If $\omega(G)=3$, then for every vertex $x$ of $K_3$ the set $N(x)\setminus V(K_3)$ is independent and in particular, $G-\st x$ is bipartite.
	\end{enumerate}
\end{lemma}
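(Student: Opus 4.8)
The plan is to pin down the structure of $G$ by combining Theorem~\ref{thm: gap free and clique number at least 3}$(i)$ with the diamond- and gap-freeness hypotheses, and then to read off both statements. First I would fix a dominating clique $K=K_{\omega(G)}$, which exists by Theorem~\ref{thm: gap free and clique number at least 3}$(i)$ since $G$ is gap-free, has no isolated vertices, and satisfies $\omega(G)\ge 3$. To upgrade this to the stated property, suppose some $w\notin V(K)$ is adjacent to two vertices $a,b\in V(K)$. If $w$ were adjacent to every vertex of $K$, then $V(K)\cup\{w\}$ would be a clique of size $\omega(G)+1$, which is impossible; so there is $c\in V(K)$ with $wc\notin G$, and then the induced subgraph on $\{w,a,b,c\}$ has exactly the five edges $ab,ac,bc,wa,wb$, hence is an induced diamond, a contradiction. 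Thus every vertex outside $K$ has a unique neighbor in $V(K)$, and $V(G)\setminus V(K)$ splits into classes $B_a$ for $a\in V(K)$, where $B_a$ is the set of vertices whose only neighbor in $V(K)$ is $a$.

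Next I would check that each class $B_a$ is independent, using gap-freeness. If $b,b'\in B_a$ with $bb'\in G$, pick two further vertices $c,d\in V(K)\setminus\{a\}$ (available since $\omega(G)\ge 3$); then $cd\in G$ while none of $bc,bd,b'c,b'd$ is an edge, so $\{b,b',c,d\}$ induces exactly the pair of disjoint edges $bb'$ and $cd$, a gap---contradiction. This settles $(ii)$: for a vertex $x$ of $K_3$ we have $N(x)\setminus V(K_3)=B_x$, which is independent; and since $\st x=V(K_3)\cup B_x$, the graph $G-\st x$ is the induced subgraph on $B_y\cup B_z$ for the other two vertices $y,z$ of $K_3$, which is bipartite with parts $B_y$ and $B_z$ because both are independent.

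For $(i)$, assume $\omega(G)\ge 4$ and run the same gap argument between classes: if $b\in B_a$ and $b'\in B_{a'}$ with $a\ne a'$ and $bb'\in G$, choosing $c,d\in V(K)\setminus\{a,a'\}$ (possible since $\omega(G)\ge 4$) again produces a gap on $\{b,b',c,d\}$. Together with the previous paragraph this shows $G-K$ has no edges, so $V(G)\setminus V(K)$ is independent in $G$. To deduce that $G^c$ is chordal I would exhibit a perfect elimination ordering: $V(G)\setminus V(K)=\bigcup_a B_a$ is a clique in $G^c$, and each $a\in V(K)$ is simplicial in $G^c$ since its $G^c$-neighborhood is contained in $\bigcup_{a'\ne a}B_{a'}$ and hence in that clique (and each such $a$ stays simplicial as the other vertices of $V(K)$ are removed); deleting the vertices of $V(K)$ one by one from $G^c$ therefore leaves the clique $\bigcup_a B_a$, so $G^c$ is chordal.

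The first paragraph is where the real content lies, since it is the only place diamond-freeness is used; the delicate point is to see that a second neighbor of an outside vertex forces a genuinely \emph{induced} diamond, the competing possibility of a larger clique being ruled out separately by maximality of $\omega(G)$. Once the decomposition into the classes $B_a$ is in hand, the rest is just the same two-line gap-avoidance argument applied in two configurations, together with the elementary simplicial-ordering check for $G^c$.
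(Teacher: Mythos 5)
Your proof is correct and follows essentially the same route as the paper's: Theorem~\ref{thm: gap free and clique number at least 3} supplies the dominating clique, diamond-freeness forces the unique-neighbor property (you phrase it as ``a missing edge to $K$ creates an induced diamond,'' the paper as ``diamond-freeness forces adjacency to all of $K$,'' which is the same argument), and gap-freeness with a spare edge of $K$ rules out edges outside the clique. The only difference is that you spell out the ``in particular'' claims (the bipartition $B_y\cup B_z$ and the perfect elimination ordering of $G^c$) that the paper leaves implicit; both of those checks are correct.
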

\begin{proof}
	Let $K_{\omega(G)}$ be a dominating clique on the vertices $u_1,\dots , u_{\omega(G)}$ as in Theorem~\ref{thm: gap free and clique number at least 3}.
	
	Suppose $v$ is a vertex of $G$ which is not in $K_{\omega(G)}$. Assume for a contradiction $vu_i\in G$ and $vu_j\in G$ for some $i\neq j$. Let $k$ be different from $i$ and $j$. Since $G$ is diamond-free, $u_kv \in G$. Since $k$ was arbitrary, it follows that $v$ is connected to all vertices of $K_{\omega(G)}$. Thus $G$ contains a clique with $\omega(G)+1$ vertices, a contradiction.
	
	(i) Let $v$ and $w$ be distinct vertices outside of $K_{\omega(G)}$. Assume for a contradiction $vw\in G$. Suppose $vu_i\in G$ and $wu_j\in G$ for some $i,j$. Since $\omega(G)\geq 4$, there exist distinct vertices $u_k$ and $u_{\ell}$ such that $\ell\neq i,j$ and $k\neq i,j$. We already know that none of $vu_k, vu_{\ell}, wu_k, wu_{\ell}$ is in $G$. Hence there is a gap between $vw$ and $u_ku_{\ell}$, a contradiction.
	
	(ii) Let $x ,y$ and $z$ be the vertices of $K_3$. Assume for a contradiction $ux, vx, uv\in G$ for some $u,v\notin V(K_3)$. Since $ux \in G$, $u$ cannot be adjacent to $y$ or $z$. Similarly $v$ cannot be adjacent to $y$ or $z$. Hence there is a gap between $uv$ and $yz$ which is a contradiction.
\end{proof}

\begin{remark}
	Observe that the complement of a $C_5$ is again a cycle on $5$ vertices. Therefore a graph is $C_5$-free if and only if it is $C_5^c$-free. We shall use this fact for the rest of the paper without reference.
\end{remark}
\begin{lemma}\label{lem: gap free and chordal families}
Suppose $G$ is a gap-free graph. 
\begin{enumerate}[$(i)$]
\item If $G$ is bipartite, then $G^c$ is chordal.
\item Let $G$ be connected ($C_5$, diamond)-free and $\omega(G)=3$. Then either $G=C_6^c$ or  $G^c$ is chordal.
\end{enumerate}
\end{lemma}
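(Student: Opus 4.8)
The overall plan rests on the standard fact that a graph $H$ is chordal if and only if $H$ has no induced cycle of length at least $4$. Applying this to $H=G^c$ and passing to complements, $G^c$ is chordal if and only if $G$ has no induced subgraph isomorphic to $C_m^c$ for any $m\geq 4$. So in each part I would go through $m=4$, $m=5$, $m=6$ and $m\geq 7$ and either rule out such an induced anticycle or, in one exceptional case, show that its presence forces $G$ to be that anticycle. Three elementary facts will be used throughout: $C_4^c\cong 2K_2$; $C_5^c\cong C_5$ (already noted in the paper); and $C_6^c$ is the prism, i.e.\ two vertex-disjoint triangles $a_1a_2a_3$ and $b_1b_2b_3$ together with the matching edges $a_1b_1$, $a_2b_2$, $a_3b_3$.

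For part (i), since $G$ is gap-free it has no induced $2K_2$, hence no induced $C_4^c$. For $m\geq 5$ I would note that $C_m^c$ contains an induced odd cycle: $C_5^c\cong C_5$ is itself one, and for $m\geq 6$ the vertices $v_1,v_3,v_5$ of $C_m=(v_1\dots v_m)$ are pairwise non-adjacent in $C_m$, hence form a triangle in $C_m^c$. Either way $G$ would have an induced odd cycle, contradicting that $G$ is bipartite. Thus $G$ has no induced $C_m^c$ for any $m\geq 4$, so $G^c$ is chordal.

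For part (ii) the routine cases are disposed of similarly: $m=4$ is impossible by gap-freeness, and $m=5$ by $C_5$-freeness together with $C_5^c\cong C_5$. For $m\geq 7$ I would exhibit an induced diamond inside $C_m^c$: among the four vertices $v_1,v_3,v_5,v_6$ of $C_m$ only the pair $v_5v_6$ is an edge of $C_m$, so in $C_m^c$ these four vertices span $K_4$ with the single edge $v_5v_6$ removed, which is a diamond; since $G$ is diamond-free this rules out $m\geq 7$. (For $m=6$ the same four vertices fail to span a diamond, consistently with the prism being diamond-free, so $m=6$ genuinely needs a separate argument.)

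The case $m=6$ is the core of the proof. Suppose $G$ contains an induced copy $P$ of the prism, with triangles $T_1=\{a_1,a_2,a_3\}$ and $T_2=\{b_1,b_2,b_3\}$ and matching $a_ib_i$. I would show $P=G$, which identifies $G$ with $C_6^c$ and finishes the argument; this alternative cannot be dropped, since $C_6^c$ is connected, gap-free, $C_5$-free and diamond-free with $\omega=3$, yet $(C_6^c)^c=C_6$ is not chordal. If $P\neq G$, then since $G$ is connected there is a vertex $v\notin V(P)$ adjacent to some vertex of $P$. If $v$ were adjacent to two vertices of $T_1$, say $a_1$ and $a_2$, then $\{v,a_1,a_2,a_3\}$ would induce a diamond when $va_3\notin G$ and a $K_4$ when $va_3\in G$, contradicting diamond-freeness or $\omega(G)=3$; the same applies to $T_2$. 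Hence $v$ has at most one neighbour in each triangle, so one or two neighbours in $P$ altogether, and (using that the automorphism group of the prism is transitive on vertices, on matching edges, and on the unmatched cross-pairs $\{a_i,b_j\}$, $i\neq j$) there are three cases: $v$ adjacent only to $a_1$; $v$ adjacent exactly to the matched pair $a_1,b_1$; or $v$ adjacent exactly to the unmatched pair $a_1,b_2$. In the first two cases the set $\{v,a_1,b_2,b_3\}$ induces exactly the two disjoint edges $va_1$ and $b_2b_3$, a gap, contradicting gap-freeness; in the third case $\{v,a_1,a_3,b_3,b_2\}$ induces exactly the $5$-cycle $(v\,a_1\,a_3\,b_3\,b_2)$, contradicting $C_5$-freeness. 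This contradiction yields $P=G$, completing the proof. I expect this $m=6$ analysis to be the main obstacle: one must extract the rigidity of the prism inside a (gap, $C_5$, diamond)-free graph, and the short case check requires careful bookkeeping of which cross-pairs between the two triangles are matched and which are not.
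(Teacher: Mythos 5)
Your proof is correct and follows essentially the same strategy as the paper's: rule out induced anticycles $C_m^c$ in $G$ case by case, with $m=6$ as the crux requiring connectivity, gap-freeness, $C_5$-freeness, diamond-freeness and $\omega(G)=3$ together. The differences are only organizational --- in part $(i)$ you exhibit an odd cycle inside $C_m^c$ where the paper two-colours the anticycle directly, and in the $m=6$ case you work with the prism and first bound the neighbours of an outside vertex in each triangle before case-splitting into a gap or an induced $C_5$, whereas the paper accumulates neighbours of the outside vertex on the anticycle until a diamond or $K_4$ appears; both routes use the same hypotheses and reach the same contradictions.
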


\begin{proof}
(i) Assume for a contradiction $G$ is bipartite but $G^c$ has an induced cycle $C_n=(x_1x_2\dots x_n)$ where $n\geq 4$. Since $G$ is gap-free, we must have $n\geq 5$. Let $A\cup B$ be a bipartition  of $G$ where $x_2\in A$. Then $x_4, x_5\in B$ since $x_2x_4, x_2x_5\in G$. As $x_1x_4\in G$ we get $x_1\in A$. Similarly, as $x_1x_3\in G$, we get $x_3\in B$. But then since $x_3x_5$ is an edge of $G$, the set $B$ is not independent, which is a contradiction.

(ii) Assume for a contradiction $G\neq C_6^c$ and $G^c$ is not chordal. Then $G^c$ has an induced $C_n=(x_1 \dots x_n)$ for some $n\geq 6$.

\emph{Case 1:} Let $n=6$. Then since $G\neq C_6^c$ and $G$ is connected, without loss of generality assume that there is a vertex $y\in V(G)\setminus V(C_6^c)$ such that $yx_1$ is an edge of $G$. Since there is no gap between $yx_1$ and $x_2x_6$, either $yx_6$ or $yx_2$ must be an edge of $G$. Without loss of generality assume that $yx_2\in G$. Since the induced subgraph on the vertices $x_1, x_3, x_6, x_2, y$ is not an induced $5$-cycle of $G$, we must have either $x_3y\in G$ or $x_6y\in G$. Without loss of generality suppose that $x_3y\in G$. Consider the induced subgraph $H$ on the vertices $x_1, x_3, x_5, y$. If $yx_5\notin G$, then $H$ is an induced diamond, contradiction. If $yx_5\in G$, then $H$ is a clique on $4$ vertices, which is again a contradiction as $\omega(G)=3$.

\emph{Case 2:} Let $n\geq 7$. Then the induced subgraph on the vertices $x_1,x_3,x_4, x_6$ is a diamond, contradiction.
\end{proof}


\begin{theorem}\label{thm: regularity of gap and diamond free}
Suppose $G$ is both gap-free and diamond-free. Then $\reg(I(G))\leq 3$.
\end{theorem}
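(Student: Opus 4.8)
The plan is to induct on the number of vertices of $G$, using the regularity recursion of Theorem~\ref{lem: regularity bound with star x} to pass to smaller graphs and Fr\"oberg's theorem (Theorem~\ref{thm: frobergs theorem}) to anchor the recursion. Two elementary facts drive everything: every induced subgraph of a gap-free (resp. diamond-free) graph is again gap-free (resp. diamond-free), so the inductive hypothesis applies to $G-x$ and $G-\st x$ for any vertex $x$; and a gap-free \emph{bipartite} graph has chordal complement by Lemma~\ref{lem: gap free and chordal families}(i), hence edge-ideal regularity at most $2$ by Fr\"oberg (trivially so when there are no edges). We may assume $G$ has no isolated vertices, since deleting them changes neither the edge ideal nor the hypotheses. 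Several cases in fact give $\reg(I(G))\le 2$ and are immediate: $\omega(G)\le 1$ (then $I(G)=0$); $\omega(G)\ge 4$ (then $G^c$ is chordal by Lemma~\ref{lem: diamond free structure}(i)); and $G$ bipartite (then $G^c$ is chordal by Lemma~\ref{lem: gap free and chordal families}(i)). So it remains to treat $\omega(G)\in\{2,3\}$ with $G$ non-bipartite.

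For $\omega(G)=3$: I would take a dominating triangle $K_3$ as produced by Lemma~\ref{lem: diamond free structure} and let $x$ be any of its three vertices. By Lemma~\ref{lem: diamond free structure}(ii), $G-\st x$ is bipartite; it is also gap-free, being an induced subgraph of $G$, so $\reg(I(G-\st x))\le 2$ by the observation above. The graph $G-x$ is gap-free and diamond-free on fewer vertices, so $\reg(I(G-x))\le 3$ by induction. Feeding these into Theorem~\ref{lem: regularity bound with star x} gives $\reg(I(G))\le\max\{\,\reg(I(G-\st x))+1,\ \reg(I(G-x))\,\}\le\max\{3,3\}=3$.

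For $\omega(G)=2$: here $G$ is triangle-free, hence automatically diamond-free, and non-bipartite, so Theorem~\ref{thm: gap free and clique number at least 3}(ii) says $G$ is obtained from a cycle $C_5=(v_1v_2v_3v_4v_5)$ by multiplying each $v_i$ by some $k_i\ge 1$; let $B_i$ be the independent set replacing $v_i$. Choosing $x\in B_1$, one has $\st x=\{x\}\cup B_2\cup B_5$, and $G-\st x$ is the complete bipartite graph between $B_3$ and $B_4$ together with the now-isolated vertices of $B_1\setminus\{x\}$; this is bipartite and gap-free, so again $\reg(I(G-\st x))\le 2$. For $G-x$: if $k_1\ge 2$ it is still a $C_5$-multiplication (non-bipartite, with clique number $2$ by Proposition~\ref{prop: multiplying vertices}), so $\reg(I(G-x))\le 3$ by induction; if $k_1=1$ it is a multiplication of $P_4$, hence gap-free and bipartite, so $\reg(I(G-x))\le 2$. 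In either case Theorem~\ref{lem: regularity bound with star x} gives $\reg(I(G))\le\max\{2+1,3\}=3$.

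The substantive point — and the step I expect to carry the real weight — is the $\omega(G)=3$ case, where everything hinges on Lemma~\ref{lem: diamond free structure}(ii): deleting the closed neighborhood of a vertex in the dominating triangle must leave a bipartite graph, which is precisely what lets Fr\"oberg's theorem anchor the recursion. The $\omega(G)=2$ case is only mildly fiddly: the structural descriptions of $G-x$ and $G-\st x$ under vertex multiplication follow readily from Proposition~\ref{prop: multiplying vertices} and the definitions, and no separate computation of $\reg(I(C_5))$ is needed.
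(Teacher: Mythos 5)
Your proof is correct, and on the case that carries the real weight --- $\omega(G)=3$ --- it is the same argument as the paper's: take a vertex $x$ of the dominating triangle supplied by Lemma~\ref{lem: diamond free structure}, observe that $G-\st x$ is gap-free and bipartite so that Lemma~\ref{lem: gap free and chordal families}(i) together with Fr\"oberg's theorem gives $\reg(I(G-\st x))\le 2$, handle $G-x$ by induction on the number of vertices, and conclude via Theorem~\ref{lem: regularity bound with star x}. The cases $\omega(G)\ge 4$ and $G$ bipartite are likewise identical to the paper's. Where you genuinely diverge is the triangle-free, non-bipartite case: the paper disposes of all triangle-free graphs in one line by noting they are cricket-free and citing Banerjee's result that gap-free cricket-free graphs have regularity at most $3$, whereas you argue directly from Theorem~\ref{thm: gap free and clique number at least 3}(ii), writing $G$ as a vertex multiplication of $C_5$ and running the same $\st x$ recursion on the blown-up five-cycle (your descriptions of $G-x$ and $G-\st x$ there are accurate, including the $k_1=1$ versus $k_1\ge 2$ split). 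Your route costs a bit more case analysis but buys a self-contained proof anchored entirely on the single recursion of Theorem~\ref{lem: regularity bound with star x}, without importing Banerjee's Theorem~3.4; the paper's route is shorter at the price of the external citation. The only point worth tightening in your write-up is the degenerate convention for $\reg$ when $I(G-\st x)=0$, which you flag only parenthetically --- the paper leaves the same point implicit.
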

\begin{proof}
If $G$ is triangle-free, then it is cricket-free and thus the result follows from \cite[Theorem~3.4]{banerjee}. If $\omega(G)\geq 4$, then the result follows from Lemma~\ref{lem: diamond free structure} and Theorem~\ref{thm: frobergs theorem}. Therefore we may assume that $\omega(G)= 3$. We proceed by induction on the number of vertices of $G$. If $G$ is a triangle, then $\reg(I(G))=2$. Let $K_3$ be a dominating clique as in Lemma~\ref{lem: diamond free structure} and let $x$ be a vertex of $K_3$. Since $G-x$ is either triangle-free or have clique number $3$ we get $\reg(I(G-x))\leq 3$ by induction. From Lemma~\ref{lem: diamond free structure} and Lemma~\ref{lem: gap free and chordal families} the graph $(G-\st x)^c$ is chordal and $\reg(I(G-\st x))=2$ follows from Theorem~\ref{thm: frobergs theorem}. Thus $\reg(I(G))\leq 3$ using Theorem~\ref{lem: regularity bound with star x}.
\end{proof}

\begin{lemma}\label{lem: anticycle does not intersect s fold product}
Let $G$ be a gap-free graph and let $e_1 \dots e_s$ be an $s$-fold product of edges. Let $C_n=(u_1\dots u_n)$ be a cycle where $n\geq 5$. Suppose $C_n^c$ is an induced anticycle in the graph which is associated to $(I(G)^{s+1} : e_1\dots e_s)^{\operatorname{pol}}$. Then $e_i\cap \{u_1,\dots , u_n\}=\emptyset$ for every $1 \leq i \leq s$.
\end{lemma}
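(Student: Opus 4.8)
The plan is to argue by contradiction: suppose some edge $e_i$ meets the vertex set $\{u_1,\dots,u_n\}$, and extract a forbidden configuration. The key structural fact to exploit is Theorem~\ref{thm:edges of associated graph to colon ideal}: in the associated graph $G'$, two vertices $p,q$ are adjacent precisely when $pq\in G$ or $p,q$ are even-connected with respect to $e_1\dots e_s$. The crucial observation is that if $e_i=\{a,b\}$ is one of the product edges, then $a$ and $b$ are themselves even-connected to essentially every vertex that either of them can reach, because an even-connection can be ``padded'' by traversing the edge $e_i$. More precisely, if $w$ is any vertex with $wa\in G$ (and $w\neq b$), then $p_0=b,\ p_1=a,\ p_2=\,?$ — actually the cleanest statement is: for any neighbor $w$ of $a$ in $G$, the sequence $w=p_0,\ a=p_1,\ b=p_2,\ \dots$ shows $w$ is even-connected to $b$ using $e_i=\{a,b\}$ as the first odd-indexed edge, provided we can close it up; and symmetrically for neighbors of $b$. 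The upshot I would prove first, as a preliminary claim, is: \emph{if $\{a,b\}=e_i$ and at least one of $a,b$ lies in $\{u_1,\dots,u_n\}$, then in $G'$ the vertices $a$ and $b$ are adjacent to every vertex of $\{u_1,\dots,u_n\}$ that $G$-adjoins either of them}, and in fact $a$ is adjacent in $G'$ to every $u_j$ adjacent in $G$ to $b$ and vice versa.

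Next I would use this to pin down the anticycle $C_n^c$. Since $C_n^c$ has $n\geq 5$ vertices, each $u_j$ is non-adjacent in $G'$ to exactly two others, namely $u_{j-1}$ and $u_{j+1}$ (indices mod $n$), and adjacent in $G'$ to all the rest. Now suppose $u_j\in e_i=\{u_j, b\}$ for some $i$. Apply Theorem~\ref{thm: associated graph to colon ideal}: because $G$ is gap-free, $C_n^c$ is an induced subgraph of $G$ itself, so in $G$ the vertex $u_j$ is non-adjacent to $u_{j-1}$ and $u_{j+1}$. But $u_j u_k\in G$ for all $k\neq j-1,j,j+1$. By the preliminary claim (using $e_i=\{u_j,b\}$), the other endpoint $b$ is then adjacent in $G'$ to $u_k$ for all those $k$; and $b$ is adjacent in $G'$ to $u_j$ as well (since $u_j b=e_i\in G$). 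If $b\notin\{u_1,\dots,u_n\}$ this already produces a vertex $b$ of $G'$ adjacent to at least $n-2\geq 3$ of the anticycle vertices including a consecutive-in-$C_n$ pair, which I then need to contradict; if $b\in\{u_1,\dots,u_n\}$, say $b=u_m$, then $u_j u_m\in G$ forces $m\neq j\pm1$, and the even-connection padding shows $u_j$ is adjacent in $G'$ to $u_{m\pm1}$, contradicting that $u_j$ misses exactly $u_{j\pm1}$ in $G'$ unless $\{m-1,m+1\}\subseteq\{j-1,j+1\}$, which is impossible for $n\geq5$.

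The cleanest way to finish the $b\notin\{u_1,\dots,u_n\}$ case, and indeed to unify things, is: having shown $b$ is $G'$-adjacent to $u_j$ and to $u_k$ for all $k\neq j\pm1$, consider the three vertices $u_{j-1},u_{j+1}$ together with $b$. I would show $b$ must be $G'$-non-adjacent to both $u_{j-1}$ and $u_{j+1}$ (otherwise, again by padding across $e_i=\{u_j,b\}$, the vertex $u_j$ inherits a $G'$-edge to $u_{j-1}$ or $u_{j+1}$, contradiction). But then $b$ is a vertex of $G'$ adjacent to $u_{j+2},\dots,u_{j-2}$ (that is, $n-3\geq 2$ consecutive-in-$C_n$ anticycle vertices) and non-adjacent to $u_{j-1},u_j,u_{j+1}$, wait — $b\sim_{G'} u_j$. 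So $b$ is non-adjacent to exactly $u_{j-1}$ and $u_{j+1}$ among the $u$'s, meaning $b$ plays the same ``role'' as $u_j$; then $\{u_1,\dots,u_{j-1},b,u_{j+1},\dots,u_n\}$ would have to be handled — in fact the simplest contradiction is that $u_j$ and $b$ are then \emph{twins} in $C_n^c$, but an induced $C_n^c$ with $n\geq5$ has no twins (no two vertices with equal non-neighbor sets), and $C_n^c$ induced in $G'$ does not contain $b$, so $b\notin\{u_1,\dots,u_n\}$ being a common neighbor of $u_{j-2},u_{j+2}$ while missing $u_{j-1},u_{j+1}$ is consistent only if $n\leq 5$, which I then rule out by direct inspection of $C_5^c$.

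\textbf{Main obstacle.} The delicate point is the ``padding'' lemma: verifying that an even-connection traversing $e_i$ genuinely satisfies condition $(iv)$ of the definition (the multiplicity bound $\lvert\{j : \{p_{2j+1},p_{2j+2}\}=e_k\}\rvert\leq\lvert\{t: e_t=e_k\}\rvert$), i.e. that inserting the step across $e_i$ does not overuse that edge. Since $e_i$ appears in the product $e_1\dots e_s$ at least once, a single traversal is always allowed, so the cleanest arguments use $e_i$ exactly once in the padded walk; I expect to have to be careful that the even-connection I start from (coming from the $G$-adjacency or $G'$-adjacency I am manipulating) can be chosen not to already use $e_i$, or else that it uses $e_i$ strictly fewer than $\lvert\{t:e_t=e_i\}\rvert$ times. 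Bookkeeping the parity (the walk must have odd length $2\ell+1$) when splicing in the $\{u_j,b\}$ step is the other place where care is needed, and this is where I anticipate spending the most effort.
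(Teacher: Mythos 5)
Your overall strategy (use $e_i$ to build an even-connection whose endpoints would be a forbidden chord of the induced $C_n^c$) is the right one, but the preliminary ``padding'' claim that everything else rests on is false. An even-connection $p_0,\dots,p_{2\ell+1}$ must \emph{begin and end} with edges of $G$ ($p_0p_1$ and $p_{2\ell}p_{2\ell+1}$ are $G$-edges), so traversing $e_i=ab$ connects a $G$-neighbor of $a$ to a $G$-neighbor of $b$; it does \emph{not} make $a$ or $b$ themselves adjacent in $G'$ to the other endpoint's neighbors. Your assertion that ``$a$ is adjacent in $G'$ to every $u_j$ adjacent in $G$ to $b$ and vice versa'' fails already for the path $u_k-b-a$ with $s=1$ and $M=ab$: here $u_ka\cdot ab=u_ka^2b$ is not divisible by any product of two edges, so $u_ka\notin(I^2:ab)$, consistent with the fact that no even-connection from $a$ to $u_k$ exists. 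Consequently the key deduction in your main argument --- that the second endpoint $b$ of $e_i=\{u_j,b\}$ is $G'$-adjacent to $u_k$ for all $k\neq j\pm1$ --- is unsupported, and the case analysis built on it (including the ``twins'' discussion, which is not actually a contradiction since a vertex outside an induced anticycle may freely duplicate the neighborhood pattern of a cycle vertex) does not go through.

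What is missing is the step where gap-freeness is used to manufacture a genuine $G$-edge from the off-cycle endpoint to the cycle, which is exactly how the paper proceeds. If $e_i=xy$ with $x=u_1$ and $y\notin C_n$, the disjoint edges $xy$ and $u_2u_n$ (an edge of $C_n^c\subseteq G$) cannot form a gap, so $yu_2\in G$ or $yu_n\in G$; say $yu_2\in G$. Only now can one pad correctly: $u_2,\,y,\,x,\,u_3$ is an even-connection (its first and last steps $u_2y$ and $u_1u_3$ are $G$-edges, and the middle step is $e_i$ used once), forcing $u_2u_3\in G'$ and contradicting that $C_n^c$ is induced in $G'$. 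In the case where both endpoints lie on the cycle, $x=u_1$, $y=u_i$ with $i\neq 2,n$, the same correct form of padding applied to suitable $G$-neighbors of $x$ and $y$ (e.g.\ $u_n,\,y,\,x,\,u_{n-1}$ when $i+1<n$) produces a consecutive pair in $G'$ and the same contradiction. Your bookkeeping worry about condition $(iv)$ is a red herring here: each of these connections uses $e_i$ exactly once, which is always permitted. The real obstacle you needed to clear was producing the $G$-edge from $y$ into the cycle, and that is where gap-freeness enters.
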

\begin{proof}
Let $G'$ be the gap-free graph associated to $(I(G)^{s+1} : e_1\dots e_s)^{\operatorname{pol}}$. First note that by Theorem~\ref{thm: associated graph to colon ideal}, $C_n^c$ is also an induced anticycle in $G$. Let $e_i=xy$ be fixed. Notice that by symmetry, it is enough to consider the following two cases.

\emph{Case 1}: Suppose $\{x,y\}\cap \{u_1,\dots , u_n\}=\{x\}$. Without loss of generality assume that $x=u_1$. Since there is no gap between $xy$ and $u_2u_n$, either $yu_2\in G$ or $yu_n\in G$. Without loss of generality suppose $yu_2\in G$. Then $u_2yxu_3$ is an even-connection. Thus $u_2u_3\in G'$ which is a contradiction.

\emph{Case 2}: Suppose $\{x,y\}\cap \{u_1,\dots , u_n\}=\{x, y\}$. Without loss of generality, assume that $x=u_1$. Then $y=u_i$ for some $i\neq 2,n$. Since $n\geq 5$, either $i+1<n$ or $i>3$. If $i+1<n$, then $u_nyxu_{n-1}$ is an even-connection and $u_nu_{n-1}\in G'$, a contradiction. If $i>3$, then $u_2yxu_3$ is an even-connection and $u_2u_3\in G'$, a contradiction.
\end{proof}

\begin{lemma}\label{lem: no induced anticycle greater than 5}
Let $G$ be a (diamond, gap)-free graph and let $s\geq 1$. For every minimal generator $m$ of $I(G)^s$, the graph associated to $(I(G)^{s+1}:m)^{\pol}$ does not contain induced $C_n^c$ for all $n\geq 6$.
\end{lemma}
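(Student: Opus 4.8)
\section*{Proof proposal}

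The plan is to assume, for contradiction, that the graph $G'$ associated to $(I(G)^{s+1}:m)^{\pol}$ has an induced $C_n^c$ for some $n\geq 6$, say on the vertices of a cycle $C_n=(u_1\dots u_n)$, and to derive a contradiction; write $m=e_1\cdots e_s$ with each $e_i$ an edge of $G$. Two reductions come first. Since $(I(G)^{s+1}:m)$ is generated in degree $2$ by Theorem~\ref{thm:edges of associated graph to colon ideal}, every vertex of $V(G')\setminus V(G)$ is a polarization variable coming from a square generator $x^2$ and hence has degree $1$ in $G'$; as every vertex of $C_n^c$ has degree at least $n-3\geq 3$ in $G'$, all the $u_i$ lie in $V(G)$, and then Theorem~\ref{thm: associated graph to colon ideal} gives that the same vertices $u_1,\dots,u_n$ induce a $C_n^c$ in $G$. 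Also, I may assume $G$ has no isolated vertices, since these play no role in $I(G)$, its powers, or the colon ideals.

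Next I would dispose of the cases $n\geq 7$ and ($n=6$, $\omega(G)\geq 4$). If $n\geq 7$, then among $u_1,u_3,u_4,u_6$ the only pair consecutive on $C_n$ is $\{u_3,u_4\}$, so $\{u_1,u_3,u_4,u_6\}$ induces a diamond in $G$, contradicting that $G$ is diamond-free. So $n=6$, and $C_6^c$ is the triangular prism with triangles $T_1=\{u_1,u_3,u_5\}$, $T_2=\{u_2,u_4,u_6\}$ and matching $u_1u_4,u_2u_5,u_3u_6$, induced in $G$; in particular $\omega(G)\geq 3$. If $\omega(G)\geq 4$, then Lemma~\ref{lem: diamond free structure}(i) makes $G^c$ chordal, which is impossible because $\{u_1,\dots,u_6\}$ induces a $C_6$ in $G^c$. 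Hence I am reduced to the case $n=6$, $\omega(G)=3$, which is where the real work lies.

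Here I expect the key realization to be that a gap-free and diamond-free graph can itself be the prism, so one cannot hope to forbid an induced prism inside $G$ — the contradiction must come from the colon operation. By Lemma~\ref{lem: anticycle does not intersect s fold product} every $e_i$ is disjoint from $\{u_1,\dots,u_6\}$, so in particular $e_1=ab$ with $a,b\notin\{u_1,\dots,u_6\}$. Writing $N_P(x)=N_G(x)\cap\{u_1,\dots,u_6\}$, I would argue: (i) using $\omega(G)=3$ and diamond-freeness, $a$ cannot be adjacent to two vertices of $T_1$, for with the third vertex of $T_1$ they would span a $K_4$ or a diamond; likewise for $T_2$, and likewise for $b$; hence $|N_P(a)|\leq 2$ and $|N_P(b)|\leq 2$. (ii) Using gap-freeness, $N_P(a)\cup N_P(b)$ must be a vertex cover of the prism, because an uncovered prism edge $u_pu_q$ would, together with $ab$, form a gap on $\{a,b,u_p,u_q\}$. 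Since the prism has independence number $2$, its minimum vertex covers have size $4$ and are exactly the complements of the prism non-edges $\{u_j,u_{j+1}\}$; combined with $|N_P(a)\cup N_P(b)|\leq 4$ this forces $N_P(a)$ and $N_P(b)$ to be disjoint two-element sets, each meeting $T_1$ and $T_2$ once, whose union is $\{u_1,\dots,u_6\}\setminus\{u_j,u_{j+1}\}$.

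Finally, using that the automorphism group of the prism acts transitively on its non-edges I may take $\{u_j,u_{j+1}\}=\{u_1,u_2\}$, so that $N_P(a)\cup N_P(b)=\{u_3,u_4,u_5,u_6\}$; up to swapping $a$ and $b$ this leaves the two possibilities $(N_P(a),N_P(b))=(\{u_3,u_4\},\{u_5,u_6\})$ and $(\{u_3,u_6\},\{u_5,u_4\})$, and in each there exist $u_p\in N_P(a)$, $u_q\in N_P(b)$ with $u_pu_q$ a non-edge of the prism (namely $u_4u_5$ in the first case, $u_3u_4$ in the second). Then $u_p,a,b,u_q$ is an even-connection with respect to $m$ (the middle edge being $e_1=ab$), so by Theorem~\ref{thm:edges of associated graph to colon ideal} the monomial $u_pu_q$ is a minimal generator of $(I(G)^{s+1}:m)$ and hence $u_pu_q$ is an edge of $G'$; since $u_p,u_q\in\{u_1,\dots,u_6\}$ and $u_pu_q$ is a prism non-edge, this contradicts the prism being an induced subgraph of $G'$. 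The main obstacle throughout is precisely this $n=6$, $\omega(G)=3$ case: recognizing that it is the even-connections, not the structure of $G$, that destroy the prism, and then squeezing $N_P(a)$ and $N_P(b)$ between the diamond bound ($\leq 2$ each) and the gap bound ($\geq 4$ for their union) tightly enough to force a cross pair that fills in a prism non-edge; the cases $n\geq 7$ and $\omega(G)\geq 4$ are routine by comparison.
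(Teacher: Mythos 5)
Your proof is correct, and for the decisive case it takes a genuinely different route from the paper's. The paper treats all $n\geq 6$ uniformly: after the same two reductions (the anticycle descends to $G$ by Theorem~\ref{thm: associated graph to colon ideal}, and the edges of $m$ miss it by Lemma~\ref{lem: anticycle does not intersect s fold product}), it alternates gap-freeness with even-connection exclusions to force one endpoint $a$ of $e_1$ to be adjacent to three consecutive cycle vertices $v_1,v_2,v_3$, and then exhibits an induced diamond of $G$ on $\{v_2,v_3,v_5,a\}$ or $\{v_1,v_3,v_5,a\}$ --- so the terminal contradiction always lives inside $G$. You instead split off $n\geq 7$, where $\{u_1,u_3,u_4,u_6\}$ is already an induced diamond of $G$ with no reference to the colon ideal at all, and handle $n=6$ by a global counting argument: diamond-freeness together with $\omega(G)=3$ caps $|N_P(a)|$ and $|N_P(b)|$ at $2$, gap-freeness forces $N_P(a)\cup N_P(b)$ to be a vertex cover of the prism (minimum size $4$), and the resulting tight structure yields a cross pair $u_p\in N_P(a)$, $u_q\in N_P(b)$ spanning a prism non-edge, so the even-connection $u_p,a,b,u_q$ contradicts inducedness of the prism in $G'$. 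Your version is longer and needs the case split plus the classification of minimum vertex covers of the prism, but it isolates cleanly where the colon operation is actually indispensable: for $n\geq 7$ the statement is a fact about (gap, diamond)-free graphs alone, whereas for $n=6$ the prism genuinely occurs in such graphs (e.g.\ $G=C_6^c$) and only the even-connected edges destroy it; the paper's shorter argument, whose final contradiction is a diamond in $G$, leaves that distinction implicit.
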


\begin{proof} 
Let $G'$ be the gap-free graph which is associated to $(I(G)^{s+1}:m)^{\pol}$ as in Theorem~\ref{thm: associated graph to colon ideal}. Assume for a contradiction $G'$ has induced $C_n^c$ for some $n\geq 6$ where $C_n=(v_1\dots v_n)$. From Theorem~\ref{thm: associated graph to colon ideal}, the anticycle $C_n^c$ is an induced subgraph of $G$. Let $m=e_1\dots e_s$ for some edges $e_1,\dots , e_s$ of $G$ and let $e_1=ab$. From Lemma~\ref{lem: anticycle does not intersect s fold product} it follows that neither $a$ nor $b$ belongs to $C_n$. Since there is no gap between $v_1v_3$ and $ab$, without loss of generality assume that $v_1a\in G$. Notice that $v_2b\notin G$ since otherwise $v_1abv_2$ is an even-connection and $v_1v_2\in G'$, which is a contradiction. Similarly, $v_nb\notin G$ as otherwise $v_1abv_n$ would be even-connection and that would require $v_1v_n\in G'$.  Since there is no gap between $v_2v_n$ and $ab$, without loss of generality assume that $v_2a\in G$. Observe that $v_3b\notin G$ as otherwise $v_2abv_3$ would be even connection and that would require $v_2v_3\in G'$.  Because there is no gap between $v_3v_n$ and $ab$, either $v_na\in G$ or $v_3a\in G$. Without loss of generality, suppose $v_3a\in G$. If $av_5\in G$, then the induced subgraph of $G$ on the vertices $v_2, v_3, v_5, a$ is a diamond, which is a contradiction. Otherwise, the induced subgraph of $G$ on the vertices $v_1, v_3, v_5, a$ is a diamond, again a contradiction.  
\end{proof}

\begin{lemma}\label{lem:colon ideals with regularity 2} Let $G$ be a gap-free and diamond-free graph with $\omega(G)=3$. Let $K_3=(abc)$ be a dominating clique of $G$.
	\begin{enumerate}[$(i)$]
		\item For all $s\geq 1$ and for all $(s-1)$-fold product $e_1\dots e_{s-1}$ edges of $G$, every induced $C_5$ in the graph of $(I(G)^{s+1}:(ab)e_1\dots e_{s-1})^{\pol}$ contains at least $2$ vertices of $K_3$.
		\item For all $s\geq 1$ and for all $(s-1)$-fold product $e_1\dots e_{s-1}$ edges of $G$, $\reg((I(G)^{s+1}:(ab)e_1\dots e_{s-1})^{\pol})=2$.
	\end{enumerate} 
\end{lemma}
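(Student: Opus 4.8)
The plan is to prove both parts at once by establishing the stronger fact that the graph $G'$ associated to $(I(G)^{s+1}:M)^{\pol}$, with $M=(ab)e_1\cdots e_{s-1}$, has \emph{no} induced five-cycle; part $(i)$ is then vacuous, and part $(ii)$ follows from Fr\"oberg's criterion. Since $M$ is a product of $s$ edges of $G$ it has degree $2s$, the least degree occurring in $I(G)^s$, so $M$ is a minimal monomial generator of $I(G)^s$; hence by Theorem~\ref{thm:edges of associated graph to colon ideal} the colon ideal $(I(G)^{s+1}:M)$ is generated in degree $2$, its polarization equals $I(G')$, and $G'$ is gap-free by Theorem~\ref{thm: associated graph to colon ideal}.

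The key point I would establish first is that $c$ is adjacent in $G'$ to every other vertex of $V(G)$. Let $v\in V(G)\setminus\{c\}$. If $v\in\{a,b\}$ then $vc\in E(G)\subseteq E(G')$; otherwise, as $K_3$ is dominating, $v$ is adjacent in $G$ to $a$, $b$ or $c$, and if $vc\in G$ we are done again, while if $va\in G$ (respectively $vb\in G$) then $v,a,b,c$ (respectively $v,b,a,c$) is an even-connection between $v$ and $c$ with respect to $M$, using the distinguished factor $ab$, so $vc$ is a minimal generator of $(I(G)^{s+1}:M)$ by Theorem~\ref{thm:edges of associated graph to colon ideal}, i.e. $vc\in E(G')$. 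Two further remarks then complete a reduction: every vertex of $G'$ outside $V(G)$ is a polarization variable attached to a single generator $x^2$, hence joined in $G'$ only to $x$ and of degree $1$; and, because $C_5^c\cong C_5$, Lemma~\ref{lem: anticycle does not intersect s fold product} applied to the $s$-fold product $M$, whose first factor is $ab$, shows that neither $a$ nor $b$ can lie on an induced five-cycle of $G'$. Consequently any induced five-cycle of $G'$ must avoid all of $V(K_3)$ and all polarization variables, so it lies in $V(G)$ and, by Theorem~\ref{thm: associated graph to colon ideal}, is an induced five-cycle of $G$ as well.

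It remains to rule out an induced five-cycle $C=(v_1v_2v_3v_4v_5)$ of $G$ all of whose vertices lie outside $K_3$. By Lemma~\ref{lem: diamond free structure} each $v_i$ is adjacent in $G$ to exactly one vertex $f(v_i)$ of $K_3$, and for each $x\in\{a,b,c\}$ the set $f^{-1}(x)\subseteq N(x)\setminus V(K_3)$ is independent in $G$, hence contains no two consecutive vertices of $C$ and so has at most two elements; thus the three fibres have sizes $2,2,1$, and up to the symmetries of $K_3$ and of $C$ we may assume $f^{-1}(a)=\{v_1,v_3\}$, $f^{-1}(b)=\{v_2,v_4\}$, $f^{-1}(c)=\{v_5\}$. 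Then $v_2v_3$ and $v_5c$ are edges of $G$, while $v_2v_5$ and $v_3v_5$ are non-edges (being non-edges of $C$) and $v_2c$, $v_3c$ are non-edges (since $f(v_2)=b$ and $f(v_3)=a$); hence $\{v_2,v_3,v_5,c\}$ induces in $G$ exactly the two disjoint edges $v_2v_3$ and $v_5c$, a gap, contradicting gap-freeness. This proves that $G'$ has no induced five-cycle, and in particular establishes $(i)$.

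For part $(ii)$, the ideal $I(G')=(I(G)^{s+1}:M)^{\pol}$ is nonzero (it contains $I(G)$) and generated in degree $2$, and $(G')^c$ has no induced cycle of length $\geq 4$: length $4$ is impossible because $G'$ is gap-free, length $5$ by the previous paragraph, and length $n\geq 6$ by Lemma~\ref{lem: no induced anticycle greater than 5}. Hence $(G')^c$ is chordal, $I(G')$ has a linear minimal free resolution by Theorem~\ref{thm: frobergs theorem}, and therefore $\reg\!\left((I(G)^{s+1}:M)^{\pol}\right)=\reg(I(G'))=2$. I expect the crux to be the universality of $c$ in $G'$: this is precisely the place where the hypothesis that $ab$ occurs as a factor of the product is essential, since without it one could not exclude an induced five-cycle passing through $c$; the remaining steps amount to routine bookkeeping with gap- and diamond-freeness.
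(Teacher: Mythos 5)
Your argument is correct, and it overlaps heavily with the paper's proof: the reduction of part $(ii)$ to ``$G'$ has no induced $C_5$'' via Fr\"oberg's theorem, Lemma~\ref{lem: no induced anticycle greater than 5} and gap-freeness of $G'$ is the same, the exclusion of $a,b$ from any induced $C_5$ of $G'$ via Lemma~\ref{lem: anticycle does not intersect s fold product} (using $C_5\cong C_5^c$) is the same, and your final fibre-counting/gap argument for a $C_5$ of $G$ disjoint from $K_3$ is exactly the paper's Case~1 of part $(i)$ (sizes $2,2,1$, then a gap between $v_2v_3$ and $v_5c$). Where you genuinely diverge is in how the remaining case --- an induced $C_5$ of $G'$ meeting $K_3$ --- is killed. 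The paper proves part $(i)$ as a standalone statement, handling $|V(C_5)\cap V(K_3)|=1$ by exhibiting a single chord via the even-connection $u_2abu_4$, and only afterwards combines $(i)$ with Lemma~\ref{lem: anticycle does not intersect s fold product} to get $(ii)$. You instead prove the stronger fact that $c$ is adjacent in $G'$ to every vertex of $V(G)$ (via the even-connections $vabc$ and $vbac$ through the distinguished factor $ab$), which instantly forbids $c$ from lying on any induced $C_5$ of $G'$ and renders part $(i)$ vacuously true. This buys you two things: you never need to analyze chords of a $C_5$ passing through a vertex of $K_3$, and your argument is insensitive to \emph{which} vertex of $K_3$ lies on the cycle --- a point where the paper's Case~2 is slightly delicate, since its ``without loss of generality $a=u_1$'' conflates $a$ (an endpoint of the distinguished edge) with $c$ (which is not), and the subcase $c=u_1$ requires the different even-connection $cbau_3$ rather than $u_2abu_4$. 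The only thing the paper's route buys in exchange is a non-vacuous part $(i)$, but since part $(i)$ is used nowhere else in the paper except inside part $(ii)$, nothing is lost. Your side remarks (that an $s$-fold product of edges is automatically a minimal generator of $I(G)^s$, and that polarization vertices have degree one in $G'$ and so cannot lie on a cycle) are correct and fill in details the paper leaves implicit.
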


\begin{proof}
	$(i)$  Let $C_5=(u_1u_2u_3u_4u_5)$ be an induced cycle of $G'$ where $G'$ is the graph associated to $(I(G)^{s+1}:(ab)e_1\dots e_{s-1})^{\pol}$ as in Theorem~\ref{thm: associated graph to colon ideal}. Then since $G$ is a subgraph of $G'$ and $V(C_5)\subseteq V(G)$, the cycle $C_5$ is also an induced subgraph of $G$. Keeping Lemma~\ref{lem: diamond free structure} in mind, assume for a contradiction $V(C_5)\cap V(K_3)$ has at most one vertex.
	
	\emph{Case 1}: Suppose that $V(C_5)\cap\{a,b,c\}=\emptyset$. Since no set of $3$ vertices of $C_5$ is independent, each of $N(a)\setminus V(K_3), N(b)\setminus V(K_3)$ and $N(c)\setminus V(K_3)$ contains at least one vertex of $C_5$. Without loss of generality assume that $(N(a)\setminus V(K_3))\cap V(C_5)=\{u_5\}, (N(b)\setminus V(K_3))\cap V(C_5)=\{u_1,u_3\}$ and $(N(c)\setminus V(K_3))\cap V(C_5)=\{u_2,u_4\}$. Then, there is no edge in $G$ that connects $u_2u_3$ and $au_5$, which is absurd because $G$ is gap-free.
	
	\emph{Case 2}: Suppose that $|V(C_5)\cap \{a,b,c\}|=1$. Without loss of generality assume that $a=u_1\in V(C_5)\cap \{a,b,c\}$. Then $(N(a)\setminus V(K_3))\cap V(C_5)=\{u_2, u_5\}$. Without loss of generality we may assume that $u_4$ is adjacent to $b$ and $u_3$ is adjacent to $c$ in $G$. But then $u_2abu_4$ is an even-connection and $u_2u_4\in G'$. This is a contradiction because $C_5$ is an induced cycle of $G'$.
	
   $(ii)$ Let $G'$ be the gap-free graph which is associated to $(I(G)^{s+1}:(ab)e_1\dots e_{s-1})^{\pol}$ as in Theorem~\ref{thm: associated graph to colon ideal}. From Fr\"{o}berg's Theorem and Lemma~\ref{lem: no induced anticycle greater than 5} it suffices to show that $G'$ has no induced $C_5$. Assume for a contradiction $G'$ has induced cycle $C_5=(u_1\dots u_5)$. From Lemma~\ref{lem: anticycle does not intersect s fold product} we have $\{a,b\}\cap \{u_1,\dots ,u_5 \}=\emptyset $. On the other hand, by part $(i)$ we have $|\{a,b,c\}\cap \{u_1,\dots , u_5\}|\geq 2$ which is a contradiction.
 \end{proof}
\section{Regularity of powers of (diamond, gap)-free graphs}
The authors of \cite{arbib} classified all imperfect $P_5$-free and diamond-free graphs based on a special family of graphs. Our main result Theorem~\ref{thm: regularity of powers of gap and diamond free graphs} will be based on this classification. To state Arbib and Mosca's theorem, we need some notation first. All the graphs in \cite[Fig.~3]{arbib} contain an induced $C_5=(u_0u_1u_2u_3u_4)$. Let $D_k$ denote the set of points at distance $k$ from this $C_5$.
\begin{theorem}\cite[Theorem~1.6]{arbib} Any connected ($P_5$, diamond)-free graph that properly contains an induced $C_5$ can be obtained from $C_5$ or from a graph among those of \cite[Fig.~3]{arbib} by
multiplying some $v\in D_1\cup C_5$ and/or substituting a $P_3$-free graph for some $v\in D_2$.
\end{theorem}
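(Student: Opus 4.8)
The plan is to bound the ``radius'' of $G$ around the fixed induced $C_5=(u_0u_1u_2u_3u_4)$, then to read off the admissible local structure, and finally to contract the modules of $G$ until only a skeleton from the finite list in \cite[Fig.~3]{arbib} (or $C_5$ itself) remains. First I would show that $D_k=\emptyset$ for every $k\geq 3$, so that $V(G)=V(C_5)\cup D_1\cup D_2$. If some $w$ lay in $D_3$, take a shortest (hence induced) path $w=v_3,v_2,v_1,v_0$ with $v_i\in D_i$ and $v_0\in V(C_5)$. The vertex $v_0$ has exactly two neighbors $q,q'$ on the cycle, and $q,v_0,q'$ are three consecutive cycle vertices; were $v_1$ adjacent to both $q$ and $q'$, then $\{q,v_0,q',v_1\}$ would induce a diamond, so we may choose $q\not\sim v_1$. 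Since $q\in V(C_5)$ is adjacent to neither $v_2\in D_2$ nor $v_3\in D_3$, the vertices $v_3,v_2,v_1,v_0,q$ induce a $P_5$, a contradiction.

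Next I would pin down the attachments to the cycle. For $v\in D_1$ put $N_C(v)=N(v)\cap V(C_5)$. A short case check shows that $N_C(v)$ contains no three consecutive cycle vertices (otherwise a diamond appears) and equals neither a single vertex nor a pair of consecutive vertices (each of these yields an induced $P_5$ running around the cycle, e.g. $v,u_0,u_4,u_3,u_2$ when $N_C(v)=\{u_0,u_1\}$), so $N_C(v)$ always contains two non-adjacent cycle vertices. I would then consider the modules of $G$, i.e. sets $M$ such that every vertex outside $M$ is adjacent to all or to none of $M$. For a module $M\subseteq D_1\cup V(C_5)$ the two non-adjacent cycle vertices seen by $M$ serve as witnesses: if two vertices of $M$ were adjacent, they would form a diamond with these two common neighbors, so $M$ is independent and hence arises by vertex multiplication. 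For a module $M\subseteq D_2$, choosing a neighbor $d\in D_1$ of $M$, any induced $P_3$ inside $M$ would give a diamond together with $d$; thus $M$ is $P_3$-free, that is a disjoint union of cliques, and so arises by substituting a $P_3$-free graph. This is precisely the dichotomy between multiplying vertices of $D_1\cup C_5$ and substituting into vertices of $D_2$.

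Contracting every module to a single vertex then yields a reduced graph $\widehat{G}$ that is connected, ($P_5$, diamond)-free, has every vertex within distance $2$ of the retained $C_5$, and has no nontrivial module. To finish I would bound $|\widehat{G}|$: there are only finitely many admissible attachment types $N_C(v)$; each surviving $D_1$-vertex of a given type is distinguished in $\widehat{G}$ only by its (necessarily distinct) neighborhood in $D_2$; and the $D_2$-part is in turn controlled by its attachments to $D_1$. Enumerating the finitely many resulting reduced graphs should give exactly $C_5$ together with the graphs of \cite[Fig.~3]{arbib}, and the multiplication/substitution recipe recovers $G$ from the matching skeleton.

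The main obstacle I expect is this last enumeration: promoting the soft statement ``only finitely many reduced graphs'' to the precise and complete list of \cite[Fig.~3]{arbib} demands an exhaustive and careful bookkeeping of which admissible $D_1$-types, and which of their $D_2$-attachments, can coexist without creating an induced $P_5$ or diamond. By contrast the radius reduction and the module analysis are comparatively clean; it is showing the skeleton list is both complete and irredundant that carries the real weight.
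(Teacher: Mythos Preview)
The paper does not prove this statement: it is quoted verbatim as \cite[Theorem~1.6]{arbib} and used as a black box to derive Corollary~\ref{thm:list of graphs}. There is therefore no proof in the paper to compare your proposal against.

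That said, your outline is a reasonable sketch of a modular-decomposition proof in the spirit of the original Arbib--Mosca argument. The radius bound (showing $D_k=\emptyset$ for $k\ge 3$) is clean and correct as you wrote it. Your attachment analysis for $D_1$ is also correct as far as it goes, though note that ``$N_C(v)$ contains two non-adjacent cycle vertices'' does not by itself pin down the attachment types: for instance $N_C(v)=\{u_0,u_2,u_3\}$ satisfies your conclusion, avoids three consecutive vertices, yet is a genuinely different type from $\{u_0,u_2\}$, and you will need the full list of admissible $N_C(v)$ (the sets usually denoted $a_i$, $b_i$ etc.\ in \cite{arbib}) before the enumeration step. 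Your module argument also needs more care: a module $M$ meeting $V(C_5)$ need not lie entirely in $D_1\cup V(C_5)$, and your ``two non-adjacent common neighbours'' witnesses must lie \emph{outside} $M$, which is not automatic. Finally, as you yourself anticipate, the real content is the exhaustive enumeration of prime (module-free) skeletons, and your sketch gives no mechanism for carrying this out or certifying completeness; this is precisely the part that \cite{arbib} does by a lengthy case analysis, and it cannot be finessed.
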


Since any gap-free graph is $P_5$-free we obtain the following corollary.
 \begin{corollary}\label{thm:list of graphs}
 	Any connected (gap, diamond)-free graph that properly contains an induced $C_5$ is either one of the graphs in Fig.~\ref{fig:classification of graphs} or it can be obtained from $C_5$ or from a graph among those of Fig.~\ref{fig:classification of graphs} by multiplying some vertices which do not belong to any triangles.
 \end{corollary}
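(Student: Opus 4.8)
The plan is to derive Corollary~\ref{thm:list of graphs} from Arbib and Mosca's theorem (the cited \cite[Theorem~1.6]{arbib}) by two elementary translations: first, that ``gap-free'' is a stronger hypothesis than ``$P_5$-free,'' so the classification applies; and second, that the two graph operations allowed in their theorem (multiplying a vertex $v\in D_1\cup C_5$, and substituting a $P_3$-free graph for a vertex $v\in D_2$) collapse, under the gap-free hypothesis, into the single operation ``multiplying vertices which do not belong to any triangle,'' possibly after absorbing finitely many exceptional graphs into the list Fig.~\ref{fig:classification of graphs}.

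First I would observe that if $G$ is gap-free then $G$ is $P_5$-free: an induced $P_5$ on vertices $w_1w_2w_3w_4w_5$ contains the two disjoint edges $w_1w_2$ and $w_4w_5$, and the only edges among $\{w_1,w_2,w_4,w_5\}$ in $P_5$ are exactly those two, so they form a gap. Hence a connected (gap, diamond)-free graph $G$ that properly contains an induced $C_5$ is in particular a connected ($P_5$, diamond)-free graph properly containing an induced $C_5$, so \cite[Theorem~1.6]{arbib} tells us $G$ is obtained from $C_5$ or from a graph in \cite[Fig.~3]{arbib} by multiplying some $v\in D_1\cup C_5$ and/or substituting a $P_3$-free graph for some $v\in D_2$.

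Next I would analyze the two operations under the extra constraints. A $P_3$-free graph is a disjoint union of cliques, so substituting such a graph $F$ for a vertex $v$ is the same as first substituting a disjoint union of complete graphs; but if any component of $F$ has at least two vertices, or if $F$ has at least two components, one checks that the resulting graph acquires either a diamond or a gap — here is where the gap-free and diamond-free hypotheses do the work. Indeed, if $v\in D_2$ has two neighbors $v', v''$ (which it does, since $G$ is connected and $v$ is at distance $2$ from the $C_5$), then substituting even an edge $K_2$ for $v$ creates, together with $v'$, either a triangle that extends to a diamond (if $v'v''$ is an edge or $v'$ and $v''$ have a common neighbor forced by gap-freeness) — and substituting two isolated vertices for $v$ creates a gap between that pair and an edge of the $C_5$ or $D_1$. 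So the only admissible substitution for $v\in D_2$ is the trivial one (substituting a single vertex, i.e., doing nothing), unless $G$ itself is one of finitely many small graphs which we throw into Fig.~\ref{fig:classification of graphs}. Similarly, ``multiplying $v$ by $k\geq 2$'' is substituting an independent set of size $k$ for $v$; by Proposition~\ref{prop: multiplying vertices}(iii), since $G$ is diamond-free, the result stays diamond-free exactly when $v$ lies in no triangle of the base graph, and Proposition~\ref{prop: multiplying vertices}(i) guarantees gap-freeness is preserved automatically. So every multiplication that yields a (gap, diamond)-free graph must be applied to a triangle-free vertex. Collecting these observations, the surviving operation is precisely ``multiply some vertices which do not belong to any triangle,'' and the base graphs are $C_5$ together with the graphs of \cite[Fig.~3]{arbib}; renaming/curating that finite list (and absorbing the finitely many small exceptions) as Fig.~\ref{fig:classification of graphs} gives the statement.

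The main obstacle I expect is the bookkeeping in the previous paragraph: one must verify carefully, case by case, that a nontrivial $P_3$-free substitution at a $D_2$-vertex, or a vertex multiplication at a vertex lying in a triangle, genuinely forces a gap or a diamond in the ambient graph $G$ — this requires knowing enough about how $D_1$ and $D_2$ attach to the $C_5$ in the Arbib--Mosca graphs, and about which vertices of those graphs lie in triangles, so that ``does not belong to any triangle'' is the clean uniform description. A secondary, more cosmetic issue is deciding exactly which finitely many graphs belong in Fig.~\ref{fig:classification of graphs} versus which arise only as multiplications of smaller ones; this is a matter of curating \cite[Fig.~3]{arbib} rather than a mathematical difficulty, but it must be done consistently with how Fig.~\ref{fig:classification of graphs} is used in the proof of Theorem~\ref{thm: regularity of powers of gap and diamond free graphs}.
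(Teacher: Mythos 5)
Your overall strategy matches the paper's: pass from gap-free to $P_5$-free to invoke Arbib--Mosca, then use the gap-free and diamond-free hypotheses to restrict the two allowed operations. The treatment of vertex multiplication via Proposition~\ref{prop: multiplying vertices}(iii) is exactly right. But your analysis of the $P_3$-free substitution at a $D_2$-vertex contains a genuine error. You claim that substituting two isolated vertices for $v\in D_2$ ``creates a gap between that pair and an edge of the $C_5$ or $D_1$,'' and conclude that only the trivial substitution survives. This is false: in $G_{10}$ the vertex $y$ has the single neighbor $a_0$, and replacing $y$ by an independent set $\{y_1,y_2\}$ creates no gap (every edge of the resulting graph disjoint from $\{y_1,a_0\}$ meets $N(a_0)\supseteq\{u_0,u_2,u_3\}$) and no diamond ($y$ lies in no triangle). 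Such multiplications of $y$ produce legitimately new (gap, diamond)-free graphs, and the corollary's statement --- as well as Propositions~\ref{prop:10-family} and~\ref{prop:0-family}, which explicitly treat graphs obtained by multiplying $y$ --- depends on admitting them. The correct and needed claim is only that the substituted $P_3$-free graph cannot contain an \emph{edge}: if $y_1y_2$ is an edge of the substituted graph, then $y_1y_2$ and $u_1u_2$ form a gap (neither $y_1$ nor $y_2$ is adjacent to $u_1$ or $u_2$), so the substitution must be by an edgeless graph, i.e., it is a vertex multiplication at a triangle-free vertex. Your alternative route to ruling out an edge (a triangle extending to a diamond via a second neighbor $v''$) also does not work uniformly: a $D_2$-vertex need not have two neighbors ($y$ in $G_{10}$ has one), and connectivity plus distance $2$ does not supply one.

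A second, smaller omission: you defer the curation of the list to ``bookkeeping,'' but one item of that curation is a genuine step of the proof, namely that $G_4$ from \cite[Fig.~3]{arbib} must be discarded because it is itself not gap-free (there is a gap between $u_0a_0$ and $b_0b_2$), so by Proposition~\ref{prop: multiplying vertices}(i) no (gap, diamond)-free graph can be generated from it. Without that check, the base list in Fig.~\ref{fig:classification of graphs} is not justified.
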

 \begin{proof}
First note that multiplying a vertex of a triangle by $k>1$ yields a diamond. Observe that $G_{10}$ and $G_0$ are the only graphs in \cite[Fig.~3]{arbib} that contain vertices of distance $2$ from the induced $C_5=(u_0u_1u_2u_3u_4)$. Observe that in Fig.~\ref{fig:additional graph} there is a gap between the edges $u_0a_0$ and $b_0b_2$ of $G_4$. Therefore a (gap, diamond)-free graph cannot be obtained from $G_4$ by Proposition~\ref{prop: multiplying vertices}. Let $H$ be a graph that contains at least one edge. We claim that substituting $H$ for $y$ in $G_{10}$ or $G_0$ yields a gap. Indeed, if $y_1y_2$ is an edge of $H$, then there is a gap between $y_1y_2$ and $u_1u_2$.
 \end{proof}
 
 \begin{remark}\label{rk: vertex multiplication effect on cycle}
 Let $H$ be a graph and let $u$ be a vertex of $H$. Let $G$ be obtained from $H$ by multiplying the vertex $u$ by the independent set $U$. If $C_n$ is an induced cycle of $G$ where $n\geq 5$, then $C_n$ contains at most one vertex from $U$. In particular, if $H$ is $C_n$-free where $n\geq 5$, then so is $G$.
 \end{remark}
 The proof of the next lemma is computer aided. We used Maple to list the induced $5$-cycles of graphs in Fig.~\ref{fig:classification of graphs}.
  \begin{lemma}\label{lem: computer aided lemma}
 		Let $G$ be a (gap, diamond)-free graph which is obtained from a graph among the graphs $G_1, G_2,\dots , G_9$ in Fig.~\ref{fig:classification of graphs} by vertex multiplication. Let $C_5$ be an induced subgraph of $G$ and $e=\{a,b\}$ be an edge of $G$ such that $e\cap V(C_5)=\emptyset$. Then at least one of the following statements holds.
 		\begin{enumerate}[$(i)$]
 			\item $e\in E(K_3)$ for some dominating clique $K_3$ of $G$.
 			\item There exists distinct $u,v\in V(C_5)$ such that $au\in G, bv\in G$ and $uv\notin C_5$.
 		\end{enumerate}
 \end{lemma}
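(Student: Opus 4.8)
The plan is to exploit the finiteness of the list $G_1, \dots, G_9$ in Fig.~\ref{fig:classification of graphs} together with Remark~\ref{rk: vertex multiplication effect on cycle}, which controls how induced $5$-cycles behave under vertex multiplication. First I would reduce the statement to the ``base'' graphs. Suppose $G$ is obtained from some $G_j$ ($1 \le j \le 9$) by repeatedly multiplying vertices not belonging to any triangle, and let $C_5 = (u_1u_2u_3u_4u_5)$ be an induced $5$-cycle of $G$ and $e = \{a,b\}$ an edge disjoint from $V(C_5)$. By Remark~\ref{rk: vertex multiplication effect on cycle}, each multiplied vertex $u$ of $G_j$ contributes at most one vertex of its blow-up set to $C_5$; hence $C_5$ projects down to an induced $5$-cycle $\overline{C_5}$ of $G_j$ (with vertices $\overline{u_1},\dots,\overline{u_5}$), and similarly the edge $e$ projects to an edge (or possibly a single vertex, if $a,b$ lie in the same blow-up set — but that set is then independent, so $a$ and $b$ would not be adjacent, a contradiction; so $\overline e = \{\overline a, \overline b\}$ is a genuine edge or $\overline a = \overline b$ only if that vertex was multiplied, which again forces independence, contradiction). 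The key point is that adjacency in $G$ between $a$ (resp. $b$) and a vertex $u_i$ of $C_5$ holds if and only if the corresponding adjacency holds in $G_j$ between $\overline a$ (resp. $\overline b$) and $\overline{u_i}$, since multiplication preserves the full neighbour structure across blow-up classes. Likewise, $\{a,b\}$ spans an edge of a $K_3$ in $G$ iff $\{\overline a,\overline b\}$ does so in $G_j$ (using Proposition~\ref{prop: multiplying vertices}$(ii)$, the clique number is unchanged, and a dominating clique pulls back to a dominating clique). Therefore it suffices to verify conclusion $(i)$ or $(ii)$ for each of $G_1,\dots,G_9$ directly.

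Next, for the base graphs I would run the finite case analysis. For each $G_j$, enumerate all induced $5$-cycles $C_5$ (this is the computer-aided step the authors flag), and for each such $C_5$ consider every edge $e=\{a,b\}$ with $a,b \notin V(C_5)$. For each pair $(C_5, e)$ I check whether $(i)$ holds (is $e$ contained in a triangle that dominates $G_j$?); if not, I must exhibit the even-connection-type configuration of $(ii)$: distinct $u,v \in V(C_5)$ with $au, bv \in G_j$ and $uv \notin C_5$. The existence of \emph{some} neighbour of $a$ in $V(C_5)$ and \emph{some} neighbour of $b$ in $V(C_5)$ is automatic: if $a$ had no neighbour among $\{u_1,\dots,u_5\}$, then $a$ together with, say, $u_1,u_3$ would need to avoid a gap with $u_1u_3$ and the edge from $a$; more cleanly, gap-freeness of $G_j$ applied to $e$ and a non-adjacent edge $u_iu_{i+1}$ of the $C_5$ forces $a$ or $b$ to see that edge — iterating around the cycle pins down neighbours. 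The substance is combining these forced adjacencies into a \emph{pair} $u \ne v$ with $au, bv \in G_j$ and $uv$ a non-edge of the cycle (i.e. $\{u,v\}$ is one of the five ``diagonal'' pairs $\{u_i, u_{i+2}\}$). When $a$ and $b$ happen to share their unique cycle-neighbour, or when their neighbourhoods in $C_5$ are two consecutive cycle-vertices, one shows directly — using diamond-freeness on $\{a, u_i, u_{i+1}, b\}$ or on $\{a, u_{i-1}, u_{i+1}, u_i\}$-type configurations — that a diamond or a $K_4$ (contradicting $\omega = 3$, which holds since all the $G_j$ have clique number $3$) is created, ruling that degenerate sub-case out; and when $a$'s cycle-neighbourhood and $b$'s cycle-neighbourhood are ``far'' along the cycle, a diagonal pair appears for free.

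The main obstacle I anticipate is the bookkeeping in this last step: without a clean uniform argument, one is genuinely reduced to inspecting finitely many $(C_5, e)$ patterns in each $G_j$, and the count is large enough that the authors resorted to Maple. I would try to shrink it by first proving a structural lemma: in a (gap, diamond)-free graph with $\omega = 3$, any vertex $a$ outside an induced $C_5$ has either exactly two cyclically adjacent neighbours on the $C_5$, or exactly one, or is adjacent to all five — the ``exactly three'' and ``non-consecutive two'' cases being excluded by diamond-freeness (three neighbours would force a triangle inside the $C_5$'s neighbourhood creating a diamond, and two non-consecutive neighbours $u_i, u_{i+2}$ with the middle $u_{i+1}$ give a diamond on $\{a, u_i, u_{i+1}, u_{i+2}\}$). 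With $a$ and $b$ each so constrained and joined by an edge, gap-freeness of $\{a,b\}$ against the cycle edges then leaves only a handful of configurations, in most of which a diagonal pair for $(ii)$ is visible, and in the remaining one ($a,b$ both adjacent to a common short arc of the cycle) either $(i)$ holds because $\{a,b\}$ together with a shared cycle-neighbour is a dominating triangle, or a diamond appears. If such a lemma can be made to work cleanly, the per-graph verification collapses to almost nothing and the Maple computation becomes a routine confirmation rather than the heart of the proof; if not, the honest route is simply the enumeration, which is finite and mechanical.
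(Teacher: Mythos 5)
Your reduction to the base graphs $G_1,\dots,G_9$ does not work, and this is where the real content of the lemma lives. The problem is that the hypothesis $e\cap V(C_5)=\emptyset$ is not preserved by the projection $G\to G_j$: if $a$ and $b$ are ``second copies'' of multiplied vertices that lie on the projected cycle, then $e$ is disjoint from the chosen induced $C_5$ of $G$ while $\overline e=\{\overline a,\overline b\}$ meets, or even lies on, the unique induced $C_5$ of $G_j$. Concretely, take $G$ obtained from $G_1$ by multiplying $u_0$ and $u_4$, let $C_5=(u_0^1u_1^1u_2u_3u_4^1)$ and $e=u_0^2u_4^2$: this is a legitimate instance of the lemma in $G$, but it projects to the edge $u_0u_4$ of $G_1$, which is an edge of the cycle itself. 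Since in $G_1$ (and $G_2$) \emph{every} edge meets the $5$-cycle, your base-graph verification would be vacuous there, yet these duplicated-copy edges are exactly the cases the lemma must handle (and for which conclusion $(ii)$ is witnessed by cross-adjacencies such as $u_0^2u_1^1$ and $u_4^2u_3$). The paper therefore does not reduce to the base graphs; it enumerates the induced $5$-cycles of each $G_j$ by computer, uses Remark~\ref{rk: vertex multiplication effect on cycle} only to pin down the shape of $C_5$ inside the multiplied graph $G$, and then lists the possible forms of $e$ \emph{in $G$ itself}, including the edges between distinct copies of multiplied vertices.

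A secondary, smaller issue: your proposed structural lemma excludes the ``two non-consecutive neighbours'' pattern by claiming $\{a,u_i,u_{i+1},u_{i+2}\}$ with $au_i,au_{i+2}\in G$, $au_{i+1}\notin G$ is a diamond; it is an induced $C_4$ of $G$ (four edges, not five), which is not forbidden in a (gap, diamond)-free graph. Likewise ``three neighbours'' forces a diamond only when two of them are consecutive on the cycle. So even the intended shortcut for the finite check is not sound as stated; the honest route you mention at the end --- direct enumeration of cycles and edges in the multiplied graphs --- is essentially what the paper does.
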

 \begin{proof}
 	 Using Corollary~\ref{thm:list of graphs} and keeping Remark~\ref{rk: vertex multiplication effect on cycle} in mind, we consider cases.
 	 
 	 \emph{Case 1}: Let $G$ be obtained from $G_1$ by replacing the vertices $u_0, u_1, u_4$ respectively with the independent sets $U_0, U_1, U_4$. Since $(u_0\dots u_4)$ is the only induced $5$-cycle of $G_1$, we may assume that $C_5=(u_0^1 u_1^1 u_2u_3u_4^1)$ for some $u_i^1\in U_i$. Since $e$ does not intersect $C_5$, by the symmetry of the graph we may assume $e=u_0^2 u_4^2$ or $e=u_0^2a_0$ for some $u_i^2\in U_i\setminus\{u_i^1\}$. In both cases, one vertex of $e$ is adjacent to $u_3$ and the other one is adjacent to $u_1^1$. Since $u_1^1u_3\notin C_5$, the statement $(ii)$ holds.
 	
 	 \emph{Case 2}: Suppose that $G$ is obtained from $G_2$ by replacing the vertex $u_1$ with $\{u_1^1,\dots ,u_1^k\}$. Since $(u_0\dots u_4)$ is the only induced $5$-cycle of $G_2$, we may assume that $C_5=(u_0u_1^1u_2u_3u_4)$. But then every edge of $G$ intersects $C_5$.
 	 
 	 \emph{Case 3}: Suppose that $G$ is obtained from $G_3$ by multiplying $u_0, u_2, u_3, u_4$. For each $i=0,2,3,4$ let $u_i$ be replaced with $\{u_i^1,\dots ,u_i^{k_i}\}$ in $G$. Suppose $e$ does not belong to the dominating clique $(b_0b_2u_1)$. Since the induced $5$-cycles of $G_3$ are $(u_0\dots u_4), (b_0u_1u_2u_3u_4)$ and $(b_2u_1u_0u_4u_3)$, the symmetry of the graph allows us to consider the following cases. 
 	  
 	 \emph{Case 3.1}: Suppose $C_5=(b_0u_1u_2^1u_3^1u_4^1)$. Then $e$ can have forms $u_0^iu_4^j, u_3^iu_4^j, u_3^iu_2^j$ or $u_3^ib_2$. If $e=u_0^iu_4^j$, then $(ii)$ holds because the endpoints of $e$ are respectively adjacent to $u_1$ and $u_3^1$ and $u_1u_3^1\notin C_5$. Similarly, if $e= u_3^iu_4^j$, then the endpoints of $e$ are respectively adjacent to $u_2^1$ and $b_0$ but $u_2^1b_0\notin C_5$. Lastly, if $e=u_3^iu_2^j$ or $e=u_3^ib_2$ then the endpoints of $e$ are respectively adjacent to  $u_4^1$ and $u_1$ but $u_1u_4^1\notin C_5$.
 	 
 	 \emph{Case 3.2}: Suppose  $C_5=(u_0^1u_1u_2^1u_3^1 u_4^1)$. Notice that if $e=u_p^iu_q^j$ for some $u_pu_q\in C_5$, then $(ii)$ clearly holds. By the symmetry of the graph, we may assume that $e=b_0u_4^i$. Then $u_3^1u_4^i\in G$, $b_0u_1\in G$ but $u_1u_3^1\notin G$ and $(ii)$ is satisfied.

 	 \emph{Case 4}: Suppose that $G$ is obtained from $G_5$ by multiplying the vertices $u_1, u_4$. Observe that $K_3=(b_1b_4u_0)$ is a dominating clique and every induced $5$-cycle of $G$ contains the vertices $u_0, u_2, u_3$. It follows that  every edge of $G$ either belongs to $K_3$ or intersects $C_5$.
 	 
 	 \emph{Case 5}: Suppose that $G$ is obtained from $G_6$ by multiplying the vertices $u_1, u_4, b_2$. Suppose that $u_1$ and $b_2$ are respectively replaced with the independent sets $U$ and $B$. Note that $(b_1b_4u_0)$ and $(a_0u_2u_3)$ are dominating cliques of $G$. Suppose that $e$ does not belong to these dominating triangles. Note that every induced $5$-cycle of $G$ contains both $u_0$ and $u_3$. So, $e$ contains neither $u_0$ nor $u_3$. Then $e$ can take the forms $uu_2, ub, b_1u_2$ or $b_1b$ for some $u\in U, b\in B$. In each case, one vertex of $e$ is adjacent to $u_0$ and the other one is adjacent to $u_3$. But since $u_0u_3\notin C_5$, the statement $(ii)$ holds.
 	 
 	 \emph{Case 6}: Suppose that $G$ is obtained from $G_7$ by multiplying the vertices $u_1, b_3, b_4$. Suppose that $b_3$ and $b_4$ are respectively replaced with the independent sets $B_3$ and $B_4$. Note that $(a_2u_0u_4)$ and $(a_0u_2u_3)$ are dominating cliques of $G$. Suppose that $e$ does not belong to these dominating triangles. Note that every induced $5$-cycle of $G$ contains both $u_0$ and $u_2$. So, $e$ contains neither $u_0$ nor $u_2$. Then $e$ can take the forms $u_3u_4, u_3b_4', b_3'u_4$ or $b_3'b_4'$ for some $b_3'\in B_3, b_4'\in B_4$. In each case, one vertex of $e$ is adjacent to $u_0$ and the other one is adjacent to $u_2$. But since $u_0u_2\notin C_5$, the statement $(ii)$ holds.

 	 \emph{Case 7}: Suppose that $G$ is obtained from $G_8$ by multiplying the vertices $b_2, b_4, u_1$. Suppose that $u_1$ and $b_2$ are respectively replaced with the independent sets $U$ and $B$. Note that $(a_2u_0u_4)$ and $(a_0u_2u_3)$ are dominating cliques of $G$. Suppose that $e$ does not belong to these dominating triangles. Note that every induced $5$-cycle of $G$ contains both $u_0$ and $u_3$. So, $e$ contains neither $u_0$ nor $u_3$. Then $e$ can take the forms $ub, uu_2, a_2u_2$ or $a_2b$ for some $b\in B, u\in U$. In each case, one vertex of $e$ is adjacent to $u_0$ and the other one is adjacent to $u_3$. But since $u_0u_3\notin C_5$, the statement $(ii)$ holds.
 	 
 	 \emph{Case 8}: Since every vertex of $G_9$ belongs to a triangle, $G_9$ does not generate any other graphs. Let $e$ be an edge which does not belong to any of the dominating triangles $(a_0 u_2 u_3)$, $(u_0 a_2 u_4)$ and $(b_0 b_2 u_1)$. Examining all $5$-cycles of $G_9$, we see that there are $3$ cases to consider.
 	 
 	 \emph{Case 8.1}: Suppose $C_5$ contains $a_0, a_2$ and $b_0$. Then $e$ is equal to one of $u_3u_4, u_3b_2, u_2u_1$ or $u_0u_1$. If $e$ is equal to $u_3u_4$ or  $u_3b_2$, then one vertex of $e$ is adjacent to $a_0$ and the other one is adjacent to $a_2$. But $a_0a_2\notin C_5$ so $(ii)$ holds. Otherwise, one vertex of $e$ is adjacent to $a_2$ and the other one is adjacent to $b_0$ and $b_0a_2\notin C_5$.
 	 
 	 \emph{Case 8.2}: Suppose $C_5$ contains $b_2, u_0$ and $u_3$. Then $e$ is equal to one of $b_0u_4, b_0a_0, a_2u_2$ or $u_1u_2$.  If $e$ is equal to $b_0u_4$ or $b_0a_0$, then one vertex of $e$ is adjacent to $b_2$ and the other one is adjacent to $u_0$. But $b_2u_0\notin C_5$ so $(ii)$ holds. Otherwise, one vertex of $e$ is adjacent to $u_0$ and the other one is adjacent to $u_3$ and $u_0u_3\notin C_5$.

 	 \emph{Case 8.3}: Suppose $C_5$ contains $u_1, u_2$ and $u_4$. Then $e$ is equal to one of $b_0a_0, u_0a_0, b_2a_2$ or $b_2u_3$. If $e$ is equal to $b_0a_0$ or  $u_0a_0$, then one vertex of $e$ is adjacent to $u_4$ and the other one is adjacent to $u_2$. But $u_2u_4\notin C_5$ so $(ii)$ holds. Otherwise, one vertex of $e$ is adjacent to $u_1$ and the other one is adjacent to $u_4$. But $u_1u_4\notin C_5$, completing the proof. 
 \end{proof}

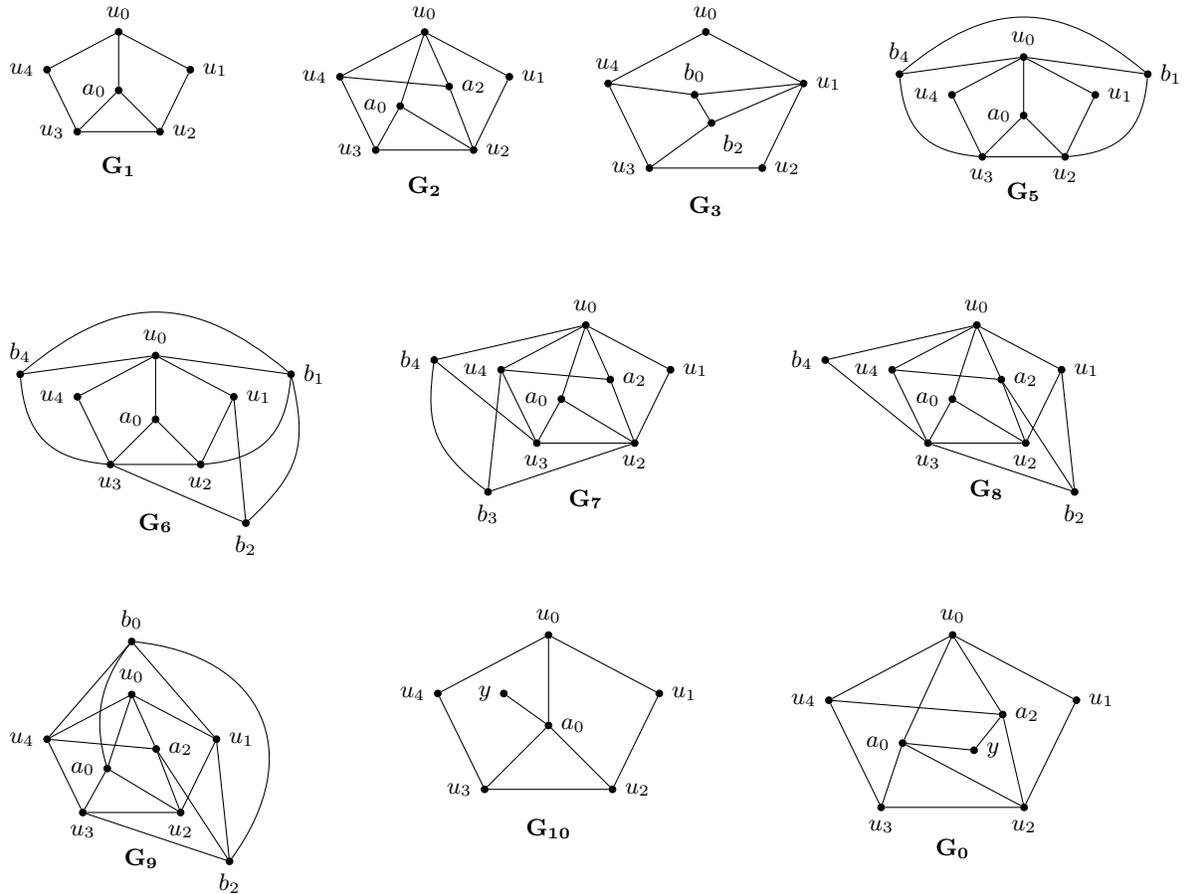
\begin{figure}[htp]

\begin{multicols}{4}
\footnotesize
\begin{tikzpicture}
[scale=0.55, vertices/.style={draw, fill=black, circle, minimum size = 4pt, inner sep=0.5pt}, another/.style={draw, fill=black, circle, minimum size = 2.5pt, inner sep=0.1pt}]
						\node[another, label=left:{$a_0$}] (a_0) at (0,0) {};
						\node[another, label=above:{$u_0$}] (u_0) at (0,1.4142) {};
						\node[another, label=right:{$u_1$}] (u_1) at (1.732, 0.5) {};%
						\node[another, label=right:{$u_2$}] (u_2) at (1,-1) {};
						\node[another, label=left:{$u_3$}] (u_3) at (-1,-1) {};
						\node[another, label=left:{$u_4$}] (u_4) at (-1.732, 0.5) {};%
						\node[label=below:{$\mathbf{G_1}$}](G_1) at (0,-1.2){};
						\foreach \to/\from in {a_0/u_0, a_0/u_2, a_0/u_3, u_0/u_1, u_1/u_2, u_2/u_3, u_3/u_4, u_4/u_0}
						\draw [-] (\to)--(\from);
						\end{tikzpicture}
\columnbreak 

\begin{tikzpicture}
[scale=0.65, vertices/.style={draw, fill=black, circle, minimum size = 4pt, inner sep=0.5pt}, another/.style={draw, fill=black, circle, minimum size = 2.5pt, inner sep=0.1pt}]
						\node[another, label=left:{$a_0$}] (a_0) at (-0.5,-0.1) {};
						\node[another, label=right:{$a_2$}] (a_2) at (0.5,0.3) {};
						\node[another, label=above:{$u_0$}] (u_0) at (0,1.4142) {};
						\node[another, label= right:{$u_1$}] (u_1) at (1.732, 0.5) {};%
						\node[another, label=right:{$u_2$}] (u_2) at (1,-1) {};
						\node[another, label=left:{$u_3$}] (u_3) at (-1,-1) {};
						\node[another, label=left:{$u_4$}] (u_4) at (-1.732, 0.5) {};%
						\node[label=below:{$\mathbf{G_2}$}](G_1) at (0,-1.2){};
						\foreach \to/\from in {a_0/u_0, a_0/u_2, a_0/u_3, a_2/u_0, a_2/u_4, a_2/u_2, u_0/u_1, u_1/u_2, u_2/u_3, u_3/u_4, u_4/u_0}
						\draw [-] (\to)--(\from);
						\end{tikzpicture}
\columnbreak

\begin{tikzpicture}
[scale=0.75, vertices/.style={draw, fill=black, circle, minimum size = 4pt, inner sep=0.5pt}, another/.style={draw, fill=black, circle, minimum size = 2.5pt, inner sep=0.1pt}]
						\node[another, label=above:{$b_0$}] (b_0) at (-0.2,0.3) {};
						\node[another, label=below right:{$b_2$}] (b_2) at (0.1,-0.2) {};
                                                  \node[another, label=above:{$u_0$}] (u_0) at (0,1.4142) {};
						\node[another, label=right:{$u_1$}] (u_1) at (1.732, 0.5) {};%
						\node[another, label=right:{$u_2$}] (u_2) at (1,-1) {};
						\node[another, label=left:{$u_3$}] (u_3) at (-1,-1) {};
						\node[another, label=above:{$u_4$}] (u_4) at (-1.732, 0.5) {};%
						\node[label=below:{$\mathbf{G_3}$}](G_1) at (0,-1.2){};
						\foreach \to/\from in {b_0/u_4, b_0/u_1, b_0/b_2, b_2/u_1, b_2/u_3, u_0/u_1, u_1/u_2, u_2/u_3, u_3/u_4, u_4/u_0}
						\draw [-] (\to)--(\from);
						\end{tikzpicture}
\columnbreak

\begin{tikzpicture}
[scale=0.55, vertices/.style={draw, fill=black, circle, minimum size = 4pt, inner sep=0.5pt}, another/.style={draw, fill=black, circle, minimum size = 2.5pt, inner sep=0.1pt}]
						\node[another, label=left:{$a_0$}] (a_0) at (0,0) {};
						\node[another, label=above:{$u_0$}] (u_0) at (0,1.4142) {};
						\node[another, label=right:{$u_1$}] (u_1) at (1.732, 0.5) {};%
						\node[another, label=below:{$u_2$}] (u_2) at (1,-1) {};
						\node[another, label=below:{$u_3$}] (u_3) at (-1,-1) {};
						\node[another, label=left:{$u_4$}] (u_4) at (-1.732, 0.5) {};%
						\node[another, label=right:{$b_1$}] (b_1) at (3, 1) {};%
					         \node[another, label=above:{$b_4$}] (b_4) at (-3, 1) {};%
					         \node[label=below:{$\mathbf{G_5}$}](G_1) at (0,-1.2){};
						\foreach \to/\from in {b_1/u_0, b_4/u_0, a_0/u_0, a_0/u_2, a_0/u_3, u_0/u_1, u_1/u_2, u_2/u_3, u_3/u_4, u_4/u_0}
						\draw [-] (\to)--(\from);
						
						 \path[every another/.style={font=\sffamily\small}]
						 (b_1)   edge [looseness=1.2, bend left=40] node [right] {} (u_2)
						  (b_4)   edge [looseness=1.2, bend right=40] node [right] {} (u_3)
                                                 (b_1)   edge [looseness=1.2, bend right=40] node [right] {} (b_4);
         
						\end{tikzpicture}
\end{multicols}

\begin{multicols}{3}
\footnotesize
\begin{tikzpicture}
[scale=0.60, vertices/.style={draw, fill=black, circle, minimum size = 4pt, inner sep=0.5pt}, another/.style={draw, fill=black, circle, minimum size = 2.5pt, inner sep=0.1pt}]
						\node[another, label=left:{$a_0$}] (a_0) at (0,0) {};
						\node[another, label=above:{$u_0$}] (u_0) at (0,1.4142) {};
						\node[another, label=right:{{$u_1$}}] (u_1) at (1.732, 0.5) {};%
						\node[another, label=below:{$u_2$}] (u_2) at (1,-1) {};
						\node[another, label=below:{$u_3$}] (u_3) at (-1,-1) {};
						\node[another, label=left:{$u_4$}] (u_4) at (-1.732, 0.5) {};%
						\node[another, label=right:{$b_1$}] (b_1) at (3, 1) {};%
					         \node[another, label=above:{$b_4$}] (b_4) at (-3, 1) {};%
					         \node[another, label=below:{$b_2$}] (b_2) at (2,-2.3) {};
					         \node[label=below:{$\mathbf{G_6}$}](G_6) at (0,-1.7){};
						\foreach \to/\from in {b_2/u_3, b_2/u_1, b_1/u_0, b_4/u_0, a_0/u_0, a_0/u_2, a_0/u_3, u_0/u_1, u_1/u_2, u_2/u_3, u_3/u_4, u_4/u_0}
						\draw [-] (\to)--(\from);
						
						 \path[every another/.style={font=\sffamily\small}]
						 (b_1)   edge [looseness=1.2, bend left=30] node [right] {} (b_2)

						 (b_1)   edge [looseness=1.2, bend left=40] node [right] {} (u_2)
						  (b_4)   edge [looseness=1.2, bend right=40] node [right] {} (u_3)
                                                 (b_1)   edge [looseness=1.2, bend right=40] node [right] {} (b_4);
         
						\end{tikzpicture}

\columnbreak 

\begin{tikzpicture}
[scale=0.65, vertices/.style={draw, fill=black, circle, minimum size = 4pt, inner sep=0.5pt}, another/.style={draw, fill=black, circle, minimum size = 2.5pt, inner sep=0.1pt}]
						\node[another, label=left:{$a_0$}] (a_0) at (-0.5,-0.1) {};
						\node[another, label=right:{$a_2$}] (a_2) at (0.5,0.3) {};
						\node[another, label=above:{$u_0$}] (u_0) at (0,1.4142) {};
						\node[another, label= right:{$u_1$}] (u_1) at (1.732, 0.5) {};%
						\node[another, label=below:{$u_2$}] (u_2) at (1,-1) {};
						\node[another, label=below:{$u_3$}] (u_3) at (-1,-1) {};
						\node[another, label=left:{$u_4$}] (u_4) at (-1.732, 0.5) {};%
						\node[another, label=below:{$b_3$}] (b_3) at (-2,-2) {};
						\node[another, label=left:{$b_4$}] (b_4) at (-3.1, 0.7) {};%
						\node[label=below:{$\mathbf{G_7}$}](G_7) at (0,-1.6){};

						\foreach \to/\from in {b_4/u_0, b_4/u_3, b_3/u_4, b_3/u_2, a_0/u_0, a_0/u_2, a_0/u_3, a_2/u_0, a_2/u_4, a_2/u_2, u_0/u_1, u_1/u_2, u_2/u_3, u_3/u_4, u_4/u_0}
						\draw [-] (\to)--(\from);
						
						 \path[every another/.style={font=\sffamily\small}]
						 (b_4)   edge [looseness=1.2, bend right=30] node [right] {} (b_3);
						\end{tikzpicture}
\columnbreak

\begin{tikzpicture}
[scale=0.65, vertices/.style={draw, fill=black, circle, minimum size = 4pt, inner sep=0.5pt}, another/.style={draw, fill=black, circle, minimum size = 2.5pt, inner sep=0.1pt}]
						\node[another, label=left:{$a_0$}] (a_0) at (-0.5,-0.1) {};
						\node[another, label=right:{$a_2$}] (a_2) at (0.5,0.3) {};
						\node[another, label=above:{$u_0$}] (u_0) at (0,1.4142) {};
						\node[another, label= right:{$u_1$}] (u_1) at (1.732, 0.5) {};%
						\node[another, label=below:{$u_2$}] (u_2) at (1,-1) {};
						\node[another, label=below:{$u_3$}] (u_3) at (-1,-1) {};
						\node[another, label=left:{$u_4$}] (u_4) at (-1.732, 0.5) {};%
						\node[another, label=below:{$b_2$}] (b_2) at (2,-2) {};
						\node[another, label=left:{$b_4$}] (b_4) at (-3.1, 0.7) {};%
						\node[label=below:{$\mathbf{G_8}$}](G_8) at (0.2,-1.4){};

						\foreach \to/\from in {b_4/u_0, b_4/u_3, b_2/a_2, b_2/u_1, b_2/u_3, a_0/u_0, a_0/u_2, a_0/u_3, a_2/u_0, a_2/u_4, a_2/u_2, u_0/u_1, u_1/u_2, u_2/u_3, u_3/u_4, u_4/u_0}
						\draw [-] (\to)--(\from);
						\end{tikzpicture}
\end{multicols}

\begin{multicols}{3}
\footnotesize
\begin{tikzpicture}
[scale=0.65, vertices/.style={draw, fill=black, circle, minimum size = 4pt, inner sep=0.5pt}, another/.style={draw, fill=black, circle, minimum size = 2.5pt, inner sep=0.1pt}]
						\node[another, label=left:{$a_0$}] (a_0) at (-0.5,-0.1) {};
						\node[another, label=right:{$a_2$}] (a_2) at (0.5,0.3) {};
						\node[another, label=above:{$u_0$}] (u_0) at (0,1.4142) {};
						\node[another, label= right:{$u_1$}] (u_1) at (1.732, 0.5) {};%
						\node[another, label=below:{$u_2$}] (u_2) at (1,-1) {};
						\node[another, label=below:{$u_3$}] (u_3) at (-1,-1) {};
						\node[another, label=left:{$u_4$}] (u_4) at (-1.732, 0.5) {};%
						\node[another, label=below:{$b_2$}] (b_2) at (2,-2) {};
						\node[another, label=above:{$b_0$}] (b_0) at (0, 2.5) {};%
						\node[label=below:{$\mathbf{G_9}$}](G_9) at (0.2,-1.4){};

						\foreach \to/\from in {b_0/u_4, b_0/u_1, b_2/a_2, b_2/u_1, b_2/u_3, a_0/u_0, a_0/u_2, a_0/u_3, a_2/u_0, a_2/u_4, a_2/u_2, u_0/u_1, u_1/u_2, u_2/u_3, u_3/u_4, u_4/u_0}
						\draw [-] (\to)--(\from);
						
						 \path[every another/.style={font=\sffamily\small}]
						 (b_0)   edge [looseness=1.0, bend right=27] node [right] {} (a_0)
                                                   (b_0)   edge [looseness=1.4, bend left=60] node [right] {} (b_2);

						\end{tikzpicture}
\columnbreak

\begin{tikzpicture}
[scale=0.85, vertices/.style={draw, fill=black, circle, minimum size = 4pt, inner sep=0.5pt}, another/.style={draw, fill=black, circle, minimum size = 2.5pt, inner sep=0.1pt}]
						\node[another, label=right:{$a_0$}] (a_0) at (0,0) {};
						\node[another, label=above:{$u_0$}] (u_0) at (0,1.4142) {};
						\node[another, label=right:{$u_1$}] (u_1) at (1.732, 0.5) {};%
						\node[another, label=right:{$u_2$}] (u_2) at (1,-1) {};
						\node[another, label=left:{$u_3$}] (u_3) at (-1,-1) {};
						\node[another, label=left:{$u_4$}] (u_4) at (-1.732, 0.5) {};%
						\node[another, label=left:{$y$}] (y) at (-0.7,0.5) {};
						\node[label=below:{$\mathbf{G_{10}}$}](G_1) at (0,-1.2){};

						\foreach \to/\from in {a_0/y, a_0/u_0, a_0/u_2, a_0/u_3, u_0/u_1, u_1/u_2, u_2/u_3, u_3/u_4, u_4/u_0}
						\draw [-] (\to)--(\from);
						\end{tikzpicture}
\columnbreak
\begin{tikzpicture}
[scale=0.95, vertices/.style={draw, fill=black, circle, minimum size = 4pt, inner sep=0.5pt}, another/.style={draw, fill=black, circle, minimum size = 2.5pt, inner sep=0.1pt}]
						\node[another, label=left:{$a_0$}] (a_0) at (-0.7,-0.1) {};
						\node[another, label=right:{$a_2$}] (a_2) at (0.7,0.3) {};
						\node[another, label=above:{$u_0$}] (u_0) at (0,1.4142) {};
						\node[another, label= right:{$u_1$}] (u_1) at (1.732, 0.5) {};%
						\node[another, label=below:{$u_2$}] (u_2) at (1,-1) {};
						\node[another, label=below:{$u_3$}] (u_3) at (-1,-1) {};
						\node[another, label=left:{$u_4$}] (u_4) at (-1.732, 0.5) {};%
						\node[another, label=right:{$y$}] (y) at (0.3, -0.2) {};
						\node[label=below:{$\mathbf{G_0}$}](G_0) at (0,-1.2){};

						\foreach \to/\from in {a_0/y, y/a_2, a_0/u_0, a_0/u_2, a_0/u_3, a_2/u_0, a_2/u_4, a_2/u_2, u_0/u_1, u_1/u_2, u_2/u_3, u_3/u_4, u_4/u_0}
						\draw [-] (\to)--(\from);
						\end{tikzpicture}
\end{multicols}
\caption{The list of graphs from which every (gap, diamond)-free graph in Corollary~\ref{thm:list of graphs} can be generated by vertex multiplication.}
\label{fig:classification of graphs}
\end{figure}

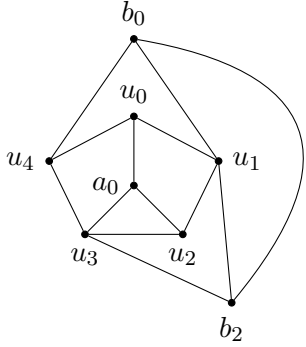
\begin{figure}[htp]
	\centering
	\begin{tikzpicture}
	[scale=0.65, vertices/.style={draw, fill=black, circle, minimum size = 4pt, inner sep=0.5pt}, another/.style={draw, fill=black, circle, minimum size = 2.5pt, inner sep=0.1pt}]
	\node[another, label=left:{$a_0$}] (a_0) at (0,0) {};
	\node[another, label=above:{$u_0$}] (u_0) at (0,1.4142) {};
	\node[another, label= right:{$u_1$}] (u_1) at (1.732, 0.5) {};%
	\node[another, label=below:{$u_2$}] (u_2) at (1,-1) {};
	\node[another, label=below:{$u_3$}] (u_3) at (-1,-1) {};
	\node[another, label=left:{$u_4$}] (u_4) at (-1.732, 0.5) {};%
	\node[another, label=below:{$b_2$}] (b_2) at (2,-2.4) {};
	\node[another, label=above:{$b_0$}] (b_0) at (0, 3) {};%
	
	\foreach \to/\from in {b_0/u_4, b_0/u_1, b_2/u_1, b_2/u_3, a_0/u_0, a_0/u_2, a_0/u_3, u_0/u_1, u_1/u_2, u_2/u_3, u_3/u_4, u_4/u_0}
	\draw [-] (\to)--(\from);
	
	\path[every another/.style={font=\sffamily\small}]
	(b_0)   edge [looseness=1.7, bend left=60] node [right] {} (b_2);
	\end{tikzpicture}
\caption{The graph $\mathbf{G_4}$ in \cite[Fig.~3]{arbib} which is not gap-free.}
\label{fig:additional graph}

\end{figure}

 \begin{theorem}\label{thm:family 1-9}
 	Let $G$ be a (gap, diamond)-free graph which is obtained from a graph among the graphs $G_1, G_2,\dots , G_9$ in Fig.\ref{fig:classification of graphs} by vertex multiplication. Then $\reg(I(G)^{s+1})=2s+2$ for all $s\geq 1$.
 \end{theorem}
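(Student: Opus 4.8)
The plan is to verify that $G$ meets the hypotheses of Theorem~\ref{thm: banerjee main bound}, from which the conclusion is immediate. Since $G$ is obtained from one of $G_1,\dots,G_9$ by vertex multiplication, Proposition~\ref{prop: multiplying vertices} shows that $G$ is connected, has no isolated vertices, and has $\omega(G)=3$; by hypothesis it is also gap-free and diamond-free, so Theorem~\ref{thm: regularity of gap and diamond free} gives $\reg(I(G))\le 3$. Hence it only remains to prove that $\reg\bigl(I(G)^{s+1}:m\bigr)\le 2$ for every $s\ge 1$ and every minimal monomial generator $m=e_1\cdots e_s$ of $I(G)^s$. Granting this, Theorem~\ref{thm: banerjee main bound} yields that $I(G)^t$ has a linear minimal free resolution for all $t\ge 2$; since $I(G)^{s+1}$ is generated in degree $2s+2$, a linear resolution forces $\reg(I(G)^{s+1})=2s+2$.

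To bound the colon ideal, by Theorem~\ref{thm: polarization} it is equivalent to bound $\reg\bigl((I(G)^{s+1}:m)^{\pol}\bigr)$, and by Theorems~\ref{thm:edges of associated graph to colon ideal} and~\ref{thm: associated graph to colon ideal} this is the edge ideal of a gap-free graph $G'$ whose edges are the edges of $G$ together with the even-connections with respect to $m$. Since the ideal is generated in degree $2$, its regularity is at least $2$, and by Fr\"oberg's theorem (Theorem~\ref{thm: frobergs theorem}) it equals $2$ exactly when $(G')^c$ is chordal, i.e.\ when $G'$ has no induced $C_n^c$ for any $n\ge 4$. Gap-freeness of $G'$ rules out $n=4$ (a gap is an induced $C_4^c$), and Lemma~\ref{lem: no induced anticycle greater than 5} rules out every $n\ge 6$. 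So the whole problem reduces to showing that $G'$ has no induced $C_5$.

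For this I split into two (exhaustive) cases according to $m$. If $m$ admits a factorization with some factor $e_i=ab$ an edge of a dominating triangle $K_3=(abc)$ of $G$, then writing $m=(ab)\,e_1\cdots e_{s-1}$ and applying Lemma~\ref{lem:colon ideals with regularity 2}(ii) directly gives $\reg\bigl((I(G)^{s+1}:m)^{\pol}\bigr)=2$, and we are done. Otherwise no factor of $m$ lies on a dominating triangle, and we argue by contradiction: suppose $G'$ has an induced $C_5=(u_1u_2u_3u_4u_5)$. By Theorem~\ref{thm: associated graph to colon ideal} this $C_5$ is already an induced subgraph of $G$, and since $C_5^c\cong C_5$, Lemma~\ref{lem: anticycle does not intersect s fold product} shows that every $e_i$ is disjoint from $V(C_5)$. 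Fix any $i$ and write $e_i=\{x,y\}$. As $e_i$ is disjoint from $V(C_5)$ and is not an edge of a dominating triangle, alternative $(i)$ of Lemma~\ref{lem: computer aided lemma} fails, so alternative $(ii)$ holds: there are distinct $p,q\in V(C_5)$ with $xp,yq\in G$ and $pq\notin C_5$. Then $p,x,y,q$ is an even-connection with respect to $m$ whose middle edge $xy=e_i$ occurs in $m$, so $pq$ is an edge of $G'$ by Theorem~\ref{thm:edges of associated graph to colon ideal}; as $p,q\in V(C_5)$ and $pq\notin E(C_5)$, this contradicts $C_5$ being induced in $G'$. Hence $G'$ has no induced $C_5$, as required.

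Essentially all of the real work is carried by the earlier results, so this theorem is mostly an assembly step; the single place where the restriction to $G_1,\dots,G_9$ enters is Lemma~\ref{lem: computer aided lemma}, and its case-by-case (computer-assisted) verification over those graphs and their vertex multiplications is the genuine obstacle behind having alternative $(ii)$ available --- once that is in hand, the even-connection deduction is a one-liner. The only bookkeeping to watch in the final write-up is that the two cases on $m$ are exhaustive and that the standing hypotheses of Lemmas~\ref{lem:colon ideals with regularity 2} and~\ref{lem: computer aided lemma} (gap-free, diamond-free, $\omega(G)=3$, existence of a dominating triangle, and being obtained from $G_1,\dots,G_9$ by vertex multiplication) are all inherited by $G$ from its construction.
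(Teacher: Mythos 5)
Your proposal is correct and follows essentially the same route as the paper's proof: reduce to Theorem~\ref{thm: banerjee main bound} via $\reg(I(G))\le 3$, split on whether a factor of $m$ lies on a dominating triangle, handle that case with Lemma~\ref{lem:colon ideals with regularity 2}(ii), and otherwise combine Lemma~\ref{lem: anticycle does not intersect s fold product}, Lemma~\ref{lem: computer aided lemma}(ii) and the even-connection $p,x,y,q$ to contradict the existence of an induced $C_5$ in $G'$. The only cosmetic difference is that you justify $\omega(G)=3$ through Proposition~\ref{prop: multiplying vertices} while the paper gets it from Lemma~\ref{lem: diamond free structure}(i); both are fine.
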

 
 \begin{proof} Since each of the graphs $G_1, G_2, \dots ,G_9$ contains an induced $C_5$ and a triangle, all of them  have clique number $3$ by Lemma~\ref{lem: diamond free structure}(i). Let $s$ be fixed and let $m=e_1\dots e_s$ be a monomial generator of $I(G)^s$ for some edges $e_1:=ab, e_2,\dots ,e_s$ of $G$. We claim that $\reg(I(G)^{s+1}:m)\leq 2$ and then proof follows from Theorem~\ref{thm: regularity of gap and diamond free} and Theorem~\ref{thm: banerjee main bound}. Note that if $e_1$ is an edge of a dominating $K_3$ of $G$, then the proof follows from Lemma~\ref{lem:colon ideals with regularity 2}(ii). So, let us assume that $e_1$ does not belong to a dominating $K_3$. Let $G'$ be the gap-free graph which is associated to $(I(G)^{s+1}:m)^{\pol}$ as in Theorem~\ref{thm: associated graph to colon ideal}. From Fr\"oberg's theorem and Lemma~\ref{lem: no induced anticycle greater than 5}, it suffices to show that $G'$ has no induced cycle on $5$ vertices. Assume for a contradiction $G'$ has an induced $C_5$. Then $C_5$ is an induced subgraph of $G$ by Theorem~\ref{thm: associated graph to colon ideal}. We know that $e_1\cap V(C_5)=\emptyset$ from Lemma~\ref{lem: anticycle does not intersect s fold product}. Now, from Lemma~\ref{lem: computer aided lemma} there exists distinct  $u,v\in V(C_5)$ such that $au\in G, bv\in G$ and $uv\notin C_5$. Then $uabv$ is an even connection with respect to $e_1\dots e_s$ and 
 $uv\in G'$. But then since $C_5$ is an induced subgraph of $G'$ we get $uv\in C_5$, which is a contradiction completing the proof.
 \end{proof}
 	
 The proof of the theorem above does not work for (gap,diamond)-free graphs which are obtained from $G_0$ or $G_{10}$ in Fig.~\ref{fig:classification of graphs}. Using Macaulay 2, one can see $\reg(I(G_0)^2:ya_2)=3$ and $\reg(I(G_{10})^2:a_0y)=3$. 

\begin{proposition}\label{prop:10-family}
If $G$ is a (gap,diamond)-free graph with $I=I(G)$ which can be generated from $G_{10}$ in Fig.~\ref{fig:classification of graphs}, then $\reg(I^{s+1})=2s+2$ for all $s\geq 1$.
\end{proposition}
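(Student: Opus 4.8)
The reverse inequality $\reg(I^{s+1})\ge 2s+2$ is automatic since $I^{s+1}$ is generated in degree $2s+2$, so the plan is to prove $\reg(I^{s+1})\le 2s+2$ for every $s\ge 1$ by induction on $s$. Write $G$ as obtained from $G_{10}$ by multiplying the non-triangle vertices $y,u_0,u_1,u_4$ into independent sets $Y,U_0,U_1,U_4$, so that every vertex of $Y$ is a leaf whose only neighbor is $a_0$. Inspecting $G_{10}$ and invoking Remark~\ref{rk: vertex multiplication effect on cycle}, every induced $5$-cycle of $G$ has the form $(u_0'u_1'u_2u_3u_4')$ with $u_i'\in U_i$, and in particular passes through $u_2$ (and through $u_3$).

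Fix a total order on $E(G)$ in which every leaf edge $a_0y$, $y\in Y$, is smaller than every other edge, let $L_1^{(s)}>\dots>L_r^{(s)}$ be the induced Banerjee order (Definition~\ref{def:order on the generators of the powers of ideal}) on the minimal generators of $I^s$, and set $J_\ell=(I^{s+1},L_1^{(s)},\dots,L_\ell^{(s)})$, so $J_r=I^s$. Chaining the short exact sequences $0\to S/(J_{\ell-1}:L_\ell^{(s)})(-2s)\to S/J_{\ell-1}\to S/J_\ell\to 0$ exactly as in the proof of Theorem~\ref{thm: banerjee main bound} yields
\[
\reg(I^{s+1})\ \le\ \max\Bigl\{\reg(I^s),\ 2s+\max_{1\le\ell\le r}\reg\bigl(J_{\ell-1}:L_\ell^{(s)}\bigr)\Bigr\}.
\]
Since $\reg(I)\le 3$ by Theorem~\ref{thm: regularity of gap and diamond free} and $\reg(I^s)=2s$ for $s\ge 2$ by the inductive hypothesis, it remains to prove $\reg(J_{\ell-1}:L_\ell^{(s)})\le 2$ for every $\ell$.

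Fix $\ell$, write $L_\ell^{(s)}=e_1\cdots e_s$ for its maximal expression, and let $G'$ be the gap-free graph with $I(G')=(I^{s+1}:L_\ell^{(s)})^{\pol}$ (Theorem~\ref{thm: associated graph to colon ideal}). By Lemma~\ref{lem: no induced anticycle greater than 5} and Theorem~\ref{thm: associated graph to colon ideal}, $G'$ has no induced $C_n^c$ with $n\ge 6$, and any induced $5$-cycle of $G'$ is an induced $5$-cycle of $G$; hence by Fröberg's theorem (Theorem~\ref{thm: frobergs theorem}) it suffices to control induced $5$-cycles. \emph{If no $e_i$ is a leaf edge}, then by Lemma~\ref{lem: anticycle does not intersect s fold product} each $e_i$ misses every induced $5$-cycle of $G$; running through the three possible shapes of a non-leaf edge disjoint from a cycle $(u_0'u_1'u_2u_3u_4')$ — namely $a_0u_0''$, $u_0''u_1''$, $u_4''u_0''$ — and using that every edge of the dominating triangle $(a_0u_2u_3)$ meets every induced $5$-cycle, one finds in each case distinct vertices $u,v$ of the cycle with $au,bv\in G$ (where $e_i=ab$) and $uv$ not a cycle-edge, so $u,v$ are even-connected with respect to $L_\ell^{(s)}$ and $uv$ is a chord of that cycle in $G'$; thus $G'$ has no induced $5$-cycle, $\reg(I^{s+1}:L_\ell^{(s)})=2$, and $\reg(J_{\ell-1}:L_\ell^{(s)})\le 2$ by Lemma~\ref{lem:regularity after adding variables}. \emph{If some $e_i$ is a leaf edge $a_0y$}, write $L_\ell^{(s)}=(a_0y)M$ with $M$ a product of $s-1$ edges and put $N=(a_0u_2)M=L_\ell^{(s)}u_2/y$, a minimal generator of $I^s$. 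Since a leaf vertex $y'$ forces the multiplicity of $a_0y'$ in every edge-expression, $N$ uses one more non-leaf edge and one fewer leaf edge than $L_\ell^{(s)}$; as leaf edges occupy the last coordinates, comparing maximal expressions gives $N>L_\ell^{(s)}$, so $N=L_j^{(s)}$ for some $j<\ell$ and $(L_j^{(s)}:L_\ell^{(s)})=(u_2)$. Therefore the variable $u_2$ occurs among the generators of $J_{\ell-1}:L_\ell^{(s)}$, and Theorem~\ref{thm:ordered colon ideals}, together with Lemma~\ref{lem:regularity after adding variables} and Theorem~\ref{thm: polarization}, gives $\reg(J_{\ell-1}:L_\ell^{(s)})\le\reg\bigl(I(G'-u_2)\bigr)$. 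Finally $G'-u_2$ is gap-free, has no induced $C_n^c$ with $n\ge 6$, and has no induced $5$-cycle (such a cycle would be one of $G'$, hence of $G$, and would contain $u_2$), so $\reg(I(G'-u_2))=2$ by Fröberg's theorem, as required. Combining the two cases with the displayed bound completes the induction, and the reverse inequality gives equality.

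The step I expect to be the main obstacle is the leaf-edge case: one must exhibit a \emph{single} generator $L_j^{(s)}>L_\ell^{(s)}$ whose colon into $L_\ell^{(s)}$ is precisely the variable $u_2$, which rests on the order comparison $N>L_\ell^{(s)}$ and on the structural fact that $u_2$ lies on every induced $5$-cycle of $G$. The no-leaf-edge case is a finite but somewhat tedious verification over the edge types of $G$, entirely parallel to Lemma~\ref{lem: computer aided lemma} and its use in Theorem~\ref{thm:family 1-9}.
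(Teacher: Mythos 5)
Your argument is correct, and while it shares the paper's overall skeleton (order the edges so that the leaf edges $a_0y$ come last, iterate the colon bound of Theorem~\ref{thm:regularity bound by colon ideals} along the order of Definition~\ref{def:order on the generators of the powers of ideal}, and split the generators of $I^s$ into a ``good'' class handled by even-connections and a ``bad'' class handled by extra variables in the colon), the treatment of the bad class is genuinely different. The paper takes the bad generators to be only the \emph{pure} products of leaf edges $(y_{j_1}a_0)^{\alpha_1}\cdots(y_{j_q}a_0)^{\alpha_q}$; for these $(I^{s+1}:M)=I$, and it shows that \emph{all} of $N(a_0)\setminus Y=\{u_0^1,\dots,u_0^{m_0},u_2,u_3\}$ occur as variable generators of the iterated colon, which collapses $I$ to the star ideal $(a_0y_1,\dots,a_0y_k)$ of regularity $2$. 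You instead declare bad every generator whose expression uses \emph{some} leaf edge (so your good case is narrower than the paper's claim, which only needs one non-leaf factor), and you dispose of the bad case with the single variable $u_2$: the swap $L_\ell^{(s)}\mapsto L_\ell^{(s)}u_2/y$ produces an earlier generator whose colon is exactly $(u_2)$, and since every induced $C_5$ of $G$ (hence of the associated graph $G'$) passes through $u_2$, deleting $u_2$ leaves a graph with chordal complement. This is slicker in that one variable suffices, at the price of having to argue with the full even-connection graph $G'$ rather than with $I$ itself; both routes are sound.

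One step deserves tightening: your justification of $N>L_\ell^{(s)}$ (``one more non-leaf edge and one fewer leaf edge, and leaf edges occupy the last coordinates'') is not a valid general principle --- a generator using fewer non-leaf edges can still be lex-larger if it uses earlier ones with higher multiplicity. What saves you is the specific swap: taking the maximal expression of $L_\ell^{(s)}$, deleting one copy of $a_0y$ and inserting one copy of $a_0u_2$ yields an expression of $N$ whose exponent vector agrees with that of $L_\ell^{(s)}$ except that the (earlier) $a_0u_2$-coordinate increases and the (later) $a_0y$-coordinate decreases; this expression is therefore lex-greater, and the maximal expression of $N$ is at least as large. With that correction the comparison $N>L_\ell^{(s)}$, and hence the whole argument, goes through.
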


\begin{proof}
First note that from Corollary~\ref{thm:list of graphs} the graph $G$ is obtained from $G_{10}$ by multiplying some of the vertices $y, u_0, u_1, u_4$. Let $s\geq 1$ be fixed and let vertices $y, u_0, u_1, u_4$ of $G_{10}$ be respectively replaced by the independent sets $\{y_1,\dots ,y_k\}$, $\{u_0^1,\dots , u_0^{m_0}\}$, $\{u_1^1,\dots , u_1^{m_1}\}$ and $\{u_4^1,\dots , u_4^{m_4}\}$ in $G$. We first show that if $e_1\dots e_s$ is an $s$-fold product of edges of $G$ with the property that there exists $1\leq i \leq s$ such that for all $1\leq j\leq k$, $y_j\nmid e_i$, then the ideal $(I^{s+1}:e_1\dots e_s)$ has regularity $2$. To this end, suppose that $y_j\nmid e_1$ for all $j$.

From Theorems~\ref{thm: polarization}, \ref{thm: frobergs theorem}, \ref{thm: associated graph to colon ideal} and Lemma~\ref{lem: no induced anticycle greater than 5}, it suffices to show that the graph $G'$ which is associated to $(I^{s+1}:e_1\dots e_s)^{\pol}$ is $C_5$-free. Assume for a contradiction $G'$ has induced $C_5$. Then from Theorem~\ref{thm: associated graph to colon ideal}, $C_5$ is also an induced subgraph of $G$. Recalling Remark~\ref{rk: vertex multiplication effect on cycle}, observe that $C_5$ must contain both $u_2$ and $u_3$. Therefore, without loss of generality, we may assume that $C_5=(u_0^1u_1^1u_2u_3u_4^1)$. 
From Lemma~\ref{lem: anticycle does not intersect s fold product} and the symmetry of the graph, we may assume that $e_1=u_0^2u_4^2$ or $e_1=u_0^2a_0$. If $e_1=u_0^2u_4^2$, then $u_3u_4^2u_0^2u_1^1$ is an even-connection between $u_3$ and $u_1^1$ with respect to $e_1\dots e_s$ and $u_3u_1^1\in G'$, which is a contradiction because $C_5$ is an induced subgraph of $G'$. Similarly, if $e_1=u_0^2a_0$, then  $u_1^1u_0^2a_0u_3$ is an even-connection between $u_1^1$ and $u_3$ with respect to $e_1\dots e_s$ and $u_1^1u_3\in G'$, a contradiction. This completes the proof of our claim.

 Consider the order $e_1>\cdots > e_r>e_{r+1}>\cdots > e_{r+k}$ on the edges of $G$ where $e_{r+j}=a_0y_j$ for all $j=1,\dots ,k$. Let $M_1>\cdots >M_z>M_{z+1}>\cdots >M_{z+t}$ be the order on the minimal monomial generators of $I^s$ induced by the order on the edges as in Definition~\ref{def:order on the generators of the powers of ideal}. Let $z$ be the largest index such that $M_z$ has an expression $M_z=e_{i_1}\dots e_{i_s}$ such that $y_j\nmid e_{i_1}$ for all $1\leq j \leq k$. Observe that $M_{z+1},\dots ,M_{z+t}$ all have unique $s$-fold product expressions up to the permutation of the edges in the product. It follows from the previously proved claim that
\begin{equation}
\reg(I^{s+1}:M_i)=2 \text{ for all } i=1,\dots ,z. 
\end{equation}
Then repeated use of Lemma~\ref{lem:regularity after adding variables}, Theorem~\ref{thm:ordered colon ideals} and Theorem~\ref{thm:regularity bound by colon ideals} yields
\begin{equation*}
\begin{split}
\reg(I^{s+1}) & \leq \max\{\reg((I^{s+1}:M_1))+2s, \reg(I^{s+1}, M_1)\}\\
& \leq  \max\{2+2s, \reg((I^{s+1}, M_1):M_2)+2s, \reg((I^{s+1}, M_1, M_2))\}    \\
& =  \max\{2+2s, \reg(((I^{s+1}:M_2), \text{some variables}))+2s, \reg((I^{s+1}, M_1, M_2))\}   \\
& =  \max\{2+2s, \reg((I^{s+1}, M_1, M_2))\}   \\
& \quad \vdots \\
& \leq \max\{2+2s, \reg(I^{s+1},M_1,\dots ,M_z)\}.
\end{split}
\end{equation*} 
In order to repeat this process, it remains to show that for all $1\leq i \leq t$, 
\begin{equation}\label{eq:repeat process G_10 family}\reg((I^{s+1}, M_1, \dots , M_z, \dots ,M_{z+i-1}):M_{z+i})\leq 2.
\end{equation}
To this end, let $1\leq i \leq t$ be fixed. Then $M_{z+i}=(y_{j_1}a_0)^{\alpha_1}(y_{j_2}a_0)^{\alpha_2}\dots (y_{j_q}a_0)^{\alpha_q}$ for some $\alpha_1,\dots , \alpha_q>0$ and $k\geq j_1>j_2>\dots > j_q \geq 1$. Observe that $(I^{s+1}:M_{z+i})=I$ and using Theorem~\ref{thm:ordered colon ideals} we obtain
$$((I^{s+1}, M_1, \dots , M_z, \dots ,M_{z+i-1}):M_{z+i})=(I, \text{some variables}).  $$
Notice that every variable $x\in\{u_0^1, u_0^2, \dots , u_0^{m_0}, u_2, u_3\}$ belongs to the variable generators of the ideal above. Indeed, for every such $x$, we have $$M_{z+i-p}=(xa_0)(y_{j_1}a_0)^{\alpha_1-1}(y_{j_2}a_0)^{\alpha_2}\dots (y_{j_q}a_0)^{\alpha_q}$$  for some $p>0$. Therefore we obtain
$$((I^{s+1}, M_1, \dots , M_z, \dots ,M_{z+i-1}):M_{z+i})=(I, \text{some variables}, u_0^1, u_0^2, \dots , u_0^{m_0}, u_2, u_3 ).  $$
Let $H$ be the graph with edge ideal $I(H)=(y_1a_0,\dots , y_ka_0)$. Then 
$$(I, \text{some variables}, u_0^1, u_0^2, \dots , u_0^{m_0}, u_2, u_3 )=(I(H), \text{some variables}). $$
Since $H^c$ is chordal, from Fr\"oberg's Theorem $\reg(I(H))=2$ and then Eq.~\eqref{eq:repeat process G_10 family} follows from Lemma~\ref{lem:regularity after adding variables}. Now we have
$$\reg(I^{s+1})\leq \max\{2+2s, \reg(I^{s+1}, I^s)\}=\max\{2+2s, \reg(I^s)\}.  $$
Hence the result follows by induction and Theorem~\ref{thm: regularity of gap and diamond free}.
\end{proof}
Since the monomial ideals are polarized by their generators, the following follows from the construction.
\begin{remark}\label{rk:polarization remark}
	If $I\subseteq S$ is a monomial ideal and $J\subseteq S$ is a square-free monomial ideal, then $(I+J)^{\pol}=I^{\pol}+J$.
\end{remark}

\begin{proposition}\label{prop:0-family}
	If $G$ is a (gap,diamond)-free graph with $I=I(G)$ which can be generated from $G_{0}$ in Fig.~\ref{fig:classification of graphs}, then $\reg(I^{s+1})=2s+2$ for all $s\geq 1$.
\end{proposition}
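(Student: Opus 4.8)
The plan is to run the same machine as in the proof of Proposition~\ref{prop:10-family}, but with \emph{two} families of ``bad'' colon ideals, mirroring the two induced five-cycles of $G_0$. By Corollary~\ref{thm:list of graphs} the only vertices of $G_0$ lying on no triangle are $y$ and $u_1$, so $G$ is obtained from $G_0$ by replacing $y$ by an independent set $\{y_1,\dots,y_k\}$ and $u_1$ by an independent set $\{u_1^1,\dots,u_1^m\}$. Call an edge of $G$ a \emph{$y$-edge} if it is $a_0y_j$ or $a_2y_j$, and a \emph{$P_3$-edge} if it is $u_0u_1^\ell$ or $u_1^\ell u_2$. A short check shows $G_0$ has exactly two induced five-cycles, $(u_0u_1u_2u_3u_4)$ and $(a_0u_3u_4a_2y)$; hence by Remark~\ref{rk: vertex multiplication effect on cycle} the induced five-cycles of $G$ are exactly those of the forms $(u_0u_1^\ell u_2u_3u_4)$ and $(a_0u_3u_4a_2y_j)$.

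Now fix a minimal generator $m=e_1\cdots e_s$ of $I^s$ and let $G'$ be the graph associated to $(I^{s+1}:m)^{\pol}$ as in Theorem~\ref{thm: associated graph to colon ideal}. The key dichotomy is: if $G'$ contains an induced five-cycle of the first form then every $e_i$ is a $y$-edge, and if it contains one of the second form then every $e_i$ is a $P_3$-edge. Each implication follows from Lemma~\ref{lem: anticycle does not intersect s fold product} (a $C_5$ is an induced $C_5^c$): the $e_i$ must avoid the vertex set of the five-cycle, and inspecting the edge list of $G$ shows that the only edges avoiding $\{u_0,u_1^\ell,u_2,u_3,u_4\}$ are the $y$-edges while the only edges avoiding $\{a_0,u_3,u_4,a_2,y_j\}$ are the $P_3$-edges. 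Since every induced $C_5$ of $G'$ is an induced $C_5$ of $G$ (Theorem~\ref{thm: associated graph to colon ideal}) and $G'$ has no induced $C_n^c$ with $n\ge 6$ (Lemma~\ref{lem: no induced anticycle greater than 5}), it follows that whenever $m$ uses an edge that is not a $y$-edge and an edge that is not a $P_3$-edge, $G'^c$ is chordal, whence $\reg(I^{s+1}:m)=2$ by Theorems~\ref{thm: polarization} and~\ref{thm: frobergs theorem}. Consequently the only generators with $\reg(I^{s+1}:m)>2$ are those that are products of $y$-edges only, or products of $P_3$-edges only.

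Next I would order the edges of $G$ so that every non-special edge precedes every special edge, and let $M_1>\cdots>M_N$ be the induced order on the minimal generators of $I^s$ from Definition~\ref{def:order on the generators of the powers of ideal}. For each $i$ I check that $\reg\!\big((I^{s+1},M_1,\dots,M_{i-1}):M_i\big)\le 2$. If $M_i$ is not one of the two bad types this is clear from Theorem~\ref{thm:ordered colon ideals} and Lemma~\ref{lem:regularity after adding variables}, since $\reg(I^{s+1}:M_i)=2$. Suppose $M_i$ is a product of $P_3$-edges only, so that $M_i$ is supported on $\{u_0,u_2\}\cup\{u_1^1,\dots,u_1^m\}$; an even-connection computation shows the associated graph is $G$ with at most the edges $u_2u_4$ and $u_0u_3$ added and no new vertices. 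Swapping a factor $u_0u_1^\ell$ of $M_i$ for $u_0a_0,u_0a_2$, or $u_0u_4$, and a factor $u_1^\ell u_2$ for $u_2a_0,u_2a_2$, or $u_2u_3$, produces minimal generators larger in the order; so by Theorem~\ref{thm:ordered colon ideals} the colon ideal $(I^{s+1},M_1,\dots,M_{i-1}):M_i$ acquires as variable generators $a_0,a_2$ together with $u_4$ and/or $u_3$ (precisely $u_4$ when only $u_0u_1$-factors occur, $u_3$ when only $u_1u_2$-factors occur, both otherwise). Deleting these variables removes any edge that was added — since $u_2u_4$ is present only when $u_4$ is removed, and $u_0u_3$ only when $u_3$ is removed — and leaves an induced subgraph of $G-\{a_0,a_2,u_4\}$, $G-\{a_0,a_2,u_3\}$, or $G-\{a_0,a_2,u_3,u_4\}$, each a copy of $K_{2,m}$ with at most one pendant edge plus isolated vertices, which has chordal complement; hence the colon ideal has regularity $2$ by Lemma~\ref{lem:regularity after adding variables} and Theorem~\ref{thm: frobergs theorem}. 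A product of $y$-edges only is handled symmetrically, exchanging the roles of $\{u_0,u_2\}\cup\{u_1^\ell\}$ and $\{a_0,a_2\}\cup\{y_j\}$: the associated graph is then $G$ with at most $a_2u_3$ and $a_0u_4$ added, and the relevant variable generators are $u_0,u_2$ together with $u_3$ and/or $u_4$. Feeding these bounds into $\reg(I^{s+1})\le\max\{2s+2,\reg(I^{s+1},M_1,\dots,M_N)\}=\max\{2s+2,\reg(I^s)\}$ via Theorem~\ref{thm:regularity bound by colon ideals}, and closing the induction on $s$ with base case $\reg(I)\le 3$ from Theorem~\ref{thm: regularity of gap and diamond free}, gives $\reg(I^{s+1})=2s+2$.

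The main obstacle, compared to Proposition~\ref{prop:10-family}, is exactly these two interleaved bad families. One must verify, case by case according to which $P_3$-edges (resp.\ $y$-edges) actually occur in $M_i$, first that the associated graph picks up only the edges $u_2u_4,u_0u_3$ (resp.\ $a_2u_3,a_0u_4$) and no polarization vertices, and second that the variable generators provided by Theorem~\ref{thm:ordered colon ideals} are enough to simultaneously kill these newly created edges and both types of induced five-cycle $(u_0u_1^\ell u_2u_3u_4)$ and $(a_0u_3u_4a_2y_j)$, so that what remains is an induced subgraph of a graph with chordal complement. Everything else is routine bookkeeping once the edge list and even-connections of $G_0$ are written down.
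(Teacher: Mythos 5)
Your proof is correct and runs on the same overall machine as the paper's proofs of Proposition~\ref{prop:10-family} and of this statement: isolate the generators $m$ of $I^s$ with $\reg(I^{s+1}:m)=2$ outright, place them first in Banerjee's order, and dispose of the remaining generators by extracting explicit variable generators from Theorem~\ref{thm:ordered colon ideals} via edge swaps, closing with the telescoping bound and induction on $s$. The differences lie in the two technical steps. First, your good/bad dichotomy is finer: the paper declares $m$ good only when some factor is disjoint from $U\cup Y$, whereas you observe that $m$ is already good as soon as it has a factor that is not a $y$-edge and a factor that is not a $P_3$-edge, so mixed products such as $(a_0y_1)(u_0u_1^1)e_3\cdots e_s$ are good for you but bad for the paper. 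Second, for the bad generators the paper never computes the associated graph: it only uses that every induced $C_5$ of the associated graph is an induced $C_5$ of $G$, that every induced $C_5$ of $G$ contains both $u_3$ and $u_4$, and that the swap variables $N(w)\setminus(U\cup Y)$ always include one of $u_3,u_4$, which kills all five-cycles at once. You instead compute the associated graph explicitly ($G$ plus at most $u_2u_4$ and $u_0u_3$, respectively $a_2u_3$ and $a_0u_4$, with no polarization vertices) and verify that deleting the swap variables leaves a gap-free bipartite graph, whose complement is chordal by Lemma~\ref{lem: gap free and chordal families}(i). Both treatments are complete; the paper's is shorter because it avoids the even-connection computation for the bad $m$, while yours buys a smaller bad set and an explicit description of the colon ideals. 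Your supporting claims (the two induced five-cycles of $G_0$, the list of new even-connection edges, and the fact that each swapped generator precedes $M_i$ because it uses a non-special edge) all check out.
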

\begin{proof}
	First note that from Corollary~\ref{thm:list of graphs} the graph $G$ is obtained from $G_{0}$ by multiplying some of the vertices $u_1, y$. Let $s\geq 1$ be fixed and suppose that the vertices $u_1$ and $y$ of $G_0$ are respectively replaced by the independent sets $U=\{u_1^1,\dots , u_1^m\}$ and $Y=\{y_1,\dots ,y_k\}$ in $G$. We claim that if $e_1\dots e_s$ is an $s$-fold product of edges of $G$ with the property that there exists $1\leq i \leq s$ such that $x\nmid e_i$ for all $x\in U\cup Y$, then $\reg(I^{s+1}:e_1\dots e_s)=2$.
	
To this end, suppose that $x\nmid e_1$ for all $x\in U\cup Y$. From Theorems~\ref{thm: polarization}, \ref{thm: frobergs theorem}, \ref{thm: associated graph to colon ideal} and Lemma~\ref{lem: no induced anticycle greater than 5}, it suffices to show that the graph $G'$ which is associated to $(I^{s+1}:e_1\dots e_s)^{\pol}$ is $C_5$-free. Assume for a contradiction $G'$ has induced $C_5$. Then $C_5$ is also an induced subgraph of $G$ because of Theorem~\ref{thm: associated graph to colon ideal}. Observe that
	$(ya_2u_4u_3a_0)$ and $(u_0u_1u_2u_3u_4)$ are the only induced $5$-cycles of $G_0$. Because of Remark~\ref{rk: vertex multiplication effect on cycle} we may assume without loss of generality that $C_5=(y_1 a_2 u_4 u_3 a_0)$ or $C_5=(u_0 u_1^1 u_2 u_3 u_4)$. If $C_5=(y_1 a_2 u_4 u_3 a_0)$, then by Lemma~\ref{lem: anticycle does not intersect s fold product}, $e_1$ does not intersect any of $a_2, u_4, u_3, a_0$ and therefore $e_1$ contains a vertex from $U$, which is a contradiction. Similarly, if $C_5=(u_0 u_1^1 u_2 u_3 u_4)$, then $e_1$ does not intersect any of $u_0, u_2, u_3, u_4$ and therefore $e_1$ contains a vertex from $Y$, which is a contradiction. This completes the proof of our claim.

Let $e_1,\dots ,e_r$ be the edges of $G$ which do not contain any vertex from $U\cup Y$. Consider the order
\begin{equation*}
\begin{split}
e_1>e_2>\cdots >e_r > & y_1a_0>y_1a_2> y_2a_0> y_2a_2> \cdots >y_ka_0> y_ka_2 \\
&  > u_1^1u_0> u_1^1u_2>u_1^2u_0> u_1^2u_2> \cdots > u_1^mu_0> u_1^mu_2    
\end{split}
\end{equation*} 
on the edges of $G$. Consider the order $M_1>\cdots >M_z>M_{z+1}>\cdots >M_{z+t}$ on the minimal monomial generators of $I^s$ induced by the order on the edges as in Definition~\ref{def:order on the generators of the powers of ideal}. Let $z$ be the largest index such that $M_z$ has an expression $M_z=e_{i_1}\dots e_{i_s}$ such that $x\nmid e_{i_1}$ for all $x\in Y\cup U$. 
 It follows from the previously proved claim that
\begin{equation}
\reg(I^{s+1}:M_i)=2 \text{ for all } i=1,\dots ,z. 
\end{equation}
Then repeated use of Lemma~\ref{lem:regularity after adding variables}, Theorem~\ref{thm:ordered colon ideals} and Theorem~\ref{thm:regularity bound by colon ideals} yields
\begin{equation*}
\begin{split}
\reg(I^{s+1}) & \leq \max\{\reg((I^{s+1}:M_1))+2s, \reg((I^{s+1},M_1))\}\\
& \leq  \max\{2+2s, \reg((I^{s+1}, M_1):M_2)+2s, \reg((I^{s+1}, M_1, M_2))\}    \\
& =  \max\{2+2s, \reg(((I^{s+1}:M_2), \text{some variables}))+2s, \reg((I^{s+1}, M_1, M_2))\}   \\
& =  \max\{2+2s, \reg((I^{s+1}, M_1, M_2))\}   \\
& \quad \vdots \\
& \leq \max\{2+2s, \reg(I^{s+1},M_1,\dots ,M_z)\}.
\end{split}
\end{equation*} 
In order to repeat this process, it remains to show that for all $1\leq i \leq t$, 
\begin{equation}\label{eq: repeat process G_0 family}
\reg(J_i):=\reg((I^{s+1}, M_1, \dots , M_z, \dots ,M_{z+i-1}):M_{z+i})\leq 2.
\end{equation}
Let $1\leq i\leq t$ be fixed. Then $M_{z+i}$ has a maximal expression where the exponent of $xw$ is non-zero for some $x\in U\cup Y$ and $xw\in G$. Let $N(w)\setminus (U\cup Y)=\{w_1,\dots, w_{\tau}\}$. Then for every $1\leq \kappa \leq \tau$ we have  $M_{z+i}w_{\kappa}/x >M_{z+i}$ since $ww_{\kappa}\in G$ and $w,w_{\kappa}\notin U\cup Y$. Therefore from Theorem~\ref{thm: polarization}, Theorem~\ref{thm:ordered colon ideals} and Remark~\ref{rk:polarization remark} it follows that
\begin{equation*}
\begin{split}
\reg(J_i) & = \reg(((I^{s+1}:M_{z+i}), w_1, \dots, w_{\tau}, \text{some variables}))\\
& =  \reg(((I^{s+1}:M_{z+i})^{\pol}, w_1, \dots, w_{\tau}, \text{some variables}))
\end{split}
\end{equation*} 

Let $G'$ be the gap-free graph which is associated to $(I^{s+1}:M_{z+i})^{\pol}$ as in Theorem~\ref{thm: associated graph to colon ideal}. Using Lemma~\ref{lem:regularity after adding variables} it suffices to show that $\reg((I(G'), w_1, \dots, w_{\tau}))\leq 2$. If $(I(G'), w_1, \dots, w_{\tau})$ is generated in degree $1$, we have nothing to show. So, let us assume that $((I(G'), w_1, \dots, w_{\tau})=I(H)$ where $H=G'- \{w_1,\dots ,w_{\tau}\}$ has at least one edge. By Fr\"oberg's theorem and Lemma~\ref{lem: no induced anticycle greater than 5}, it remains to show that $H$ is $C_5$-free. Assume for a contradiction $C_5$ is an induced cycle of $H$. Then $C_5$ is an induced cycle of $G'$ since $H$ is an induced subgraph of $G'$. From Theorem~\ref{thm: associated graph to colon ideal} it follows that $C_5$ is an induced cycle of $G$. Observe that every induced $5$-cycle of $G_0$ contains both $u_3$ and $u_4$. Therefore $C_5$ contains both $u_3$ and $u_4$ by Remark~\ref{rk: vertex multiplication effect on cycle}. If $w=a_0$ or $w=u_2$, then $u_3\notin V(H)$, and $u_3\notin V(C_5)$, a contradiction. Similarly, if $w=a_2$ or $w=u_0$, then $u_4\notin V(H)$, and $u_4\notin V(C_5)$, a contradiction.

Now we have
$$\reg(I^{s+1})\leq \max\{2+2s, \reg(I^{s+1}, I^s)\}=\max\{2+2s, \reg(I^s)\}.  $$
Hence the result follows by induction and Theorem~\ref{thm: regularity of gap and diamond free}.
\end{proof}

Finally, we prove the main result of this paper.

\begin{theorem}\label{thm: regularity of powers of gap and diamond free graphs}
	If $G$ is a (gap, diamond)-free graph, then $\reg(I(G)^s)=2s$ for all $s\geq 2$.
\end{theorem}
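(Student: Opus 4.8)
The plan is to deduce the theorem from Banerjee's sufficient condition (Theorem~\ref{thm: banerjee main bound}) wherever the general machinery applies, and to invoke the explicit classification only for the exceptional graphs. First I would note that a gap-free graph has at most one connected component containing an edge, and that isolated vertices affect neither $I(G)$ nor its regularity, so we may assume $G$ is connected with $E(G)\neq\emptyset$. By Theorem~\ref{thm: regularity of gap and diamond free} we already have $\reg(I(G))\leq 3\leq 4$, so by Theorem~\ref{thm: banerjee main bound} it suffices to show $\reg(I(G)^{s+1}:m)\leq 2$ for every $s\geq 1$ and every minimal monomial generator $m$ of $I(G)^s$; this gives that $I(G)^t$ has a linear resolution for all $t\geq 2$, and since $I(G)^t$ is generated in degree $2t$, that is exactly $\reg(I(G)^t)=2t$.

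The point to isolate is that this colon condition is really a statement about one graph. Let $G'$ be the gap-free graph associated to $(I(G)^{s+1}:m)^{\pol}$ as in Theorem~\ref{thm: associated graph to colon ideal}. Being gap-free, $G'$ has no induced $C_4^c$, and by Lemma~\ref{lem: no induced anticycle greater than 5} it has no induced $C_n^c$ for $n\geq 6$; hence, by Fr\"oberg's theorem (Theorem~\ref{thm: frobergs theorem}) and Theorem~\ref{thm: polarization}, $\reg(I(G)^{s+1}:m)=\reg(I(G'))\leq 2$ if and only if $G'$ has no induced $C_5$. So the first main case is when $G$ itself has no induced $C_5$: by the last assertion of Theorem~\ref{thm: associated graph to colon ideal}, an induced $C_5=C_5^c$ of $G'$ would be induced in $G$, which is impossible, so $G'$ has no induced $C_5$ and we are done. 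A single case thus settles all $C_5$-free (gap, diamond)-free graphs at once, including $C_6^c$, the graphs with chordal complement, and the triangle-free $C_5$-free graphs.

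It remains to handle $G$ with an induced $C_5$. If $G$ is triangle-free it is cricket-free, so being gap-free it is covered by Banerjee's theorem on (gap, cricket)-free graphs \cite{banerjee}; this takes care of $C_5$ and all of its vertex multiplications. If $G$ has a triangle, then $\omega(G)\geq 4$ is impossible, since Lemma~\ref{lem: diamond free structure}(i) would force $G^c$ to be chordal, whereas the complement of an induced $C_5$ of $G$ is an induced $5$-cycle of $G^c$. Hence $\omega(G)=3$ and $G$ properly contains an induced $C_5$, so by Corollary~\ref{thm:list of graphs} the graph $G$ is one of $G_1,\dots,G_9,G_{10},G_0$ in Figure~\ref{fig:classification of graphs}, or is obtained from $C_5$ or from one of these by multiplying vertices lying in no triangle. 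Multiplying vertices of $C_5$ keeps the graph triangle-free, so that subcase does not occur; the remaining subcases are precisely Theorem~\ref{thm:family 1-9} (for $G_1,\dots,G_9$ and their vertex multiplications), Proposition~\ref{prop:10-family} (for $G_{10}$) and Proposition~\ref{prop:0-family} (for $G_0$), each of which yields $\reg(I(G)^{s+1})=2s+2$ for all $s\geq 1$.

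I expect the only real work in assembling this proof to be organizational: checking that the trichotomy (no induced $C_5$; triangle-free with an induced $C_5$; $\omega(G)=3$ with an induced $C_5$) is exhaustive, that the reduction to connected graphs is legitimate, and --- the single idea that makes the $C_5$-free branch uniform --- that the hypothesis of Theorem~\ref{thm: banerjee main bound} is literally ``$G'$ is $C_5$-free''. All of the genuine difficulty has already been pushed into the preceding results: the Arbib--Mosca classification (Corollary~\ref{thm:list of graphs}), the computer-aided $5$-cycle analysis behind Theorem~\ref{thm:family 1-9}, and the ordered-colon bookkeeping of Propositions~\ref{prop:10-family} and~\ref{prop:0-family} needed for the two graphs $G_0,G_{10}$ on which the uniform argument fails.
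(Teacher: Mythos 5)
Your proposal is correct, and its overall skeleton matches the paper's: reduce to a connected graph, and fall back on Corollary~\ref{thm:list of graphs}, Theorem~\ref{thm:family 1-9}, Proposition~\ref{prop:10-family} and Proposition~\ref{prop:0-family} for the graphs that properly contain an induced $C_5$ and have a triangle. Where you genuinely diverge is in how the remaining cases are dispatched. The paper splits on the clique number: for $\omega(G)\geq 4$ and for the $\omega(G)=3$, $C_5$-free graphs with chordal complement it invokes Herzog--Hibi--Zheng (linear resolution of $I$ passes to all powers), and for $\omega(G)\leq 2$ and for $C_6^c$ it invokes Banerjee's (gap, cricket)-free theorem, which requires Lemma~\ref{lem: gap free and chordal families} to see that the $\omega(G)=3$, $C_5$-free case reduces to exactly those two alternatives. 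You instead observe that for any $C_5$-free (gap, diamond)-free $G$ the associated graph $G'$ of every colon ideal $(I(G)^{s+1}:m)^{\pol}$ is automatically $C_5$-free (via the $C_5\cong C_5^c$ trick and the last assertion of Theorem~\ref{thm: associated graph to colon ideal}), hence has chordal complement by gap-freeness and Lemma~\ref{lem: no induced anticycle greater than 5}, so Theorem~\ref{thm: banerjee main bound} applies directly with $\reg(I(G))\leq 3$ from Theorem~\ref{thm: regularity of gap and diamond free}. This single uniform argument absorbs the $\omega(G)\geq 4$ case, the chordal-complement case and $C_6^c$ all at once, and removes the direct dependence on Lemma~\ref{lem: gap free and chordal families}(ii) and on the Herzog--Hibi--Zheng theorem in the main proof (though both are still needed upstream, e.g.\ in establishing $\reg(I(G))\leq 3$); what it costs is nothing, since all the machinery it uses is already in place for Theorem~\ref{thm:family 1-9}. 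Your trichotomy ($C_5$-free; induced $C_5$ and triangle-free, handled by Banerjee's cricket-free theorem; induced $C_5$ and $\omega(G)=3$, handled by the classification) is exhaustive, and your observation that vertex multiplications of $C_5$ stay triangle-free correctly rules out the one残 subcase of the classification --- so the argument is complete.
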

\begin{proof}
We may assume $G$ is a connected gap-free and diamond-free graph since removal of isolated vertices does not change the edge ideal. If $\omega(G)\geq 4$, then the proof follows from Theorem~\ref{thm: frobergs theorem}, Lemma~\ref{lem: diamond free structure} and \cite[Theorem~3.2]{herzog hibi zheng}. If $\omega(G)<3$, then $G$ is cricket-free and the result follows from \cite[Theorem~6.17]{banerjee}. Therefore let us assume that $\omega(G)=3$. Note that $C_6^c$ is cricket-free since $C_6^c$ has no vertex with $4$ neighbors. If $G$ is $C_5$-free, then the result follows from combining Theorem~\ref{thm: frobergs theorem}, Lemma~\ref{lem: gap free and chordal families}, \cite[Theorem~ 6.17]{banerjee} and \cite[Theorem~3.2]{herzog hibi zheng}. Therefore let us assume that $G$ contains an induced cycle on $5$ vertices. If $G$ is obtained from $C_5$ by multiplying vertices, then $\omega(G)<3$ by Proposition~\ref{prop: multiplying vertices}. Otherwise, the result follows from combining Corollary~\ref{thm:list of graphs}, Theorem~\ref{thm:family 1-9}, Proposition~\ref{prop:10-family} and Proposition~\ref{prop:0-family}.
\end{proof}

%
\end{document}